\documentclass[10pt,a4paper]{article}
\usepackage[T1]{fontenc}
\usepackage[left=2cm, right=2cm, top=3cm, bottom=3cm]{geometry}
\usepackage{xcolor}
\usepackage{amssymb,amsthm,amsmath,dsfont}
\usepackage{mathrsfs}
\usepackage{natbib,hyperref}
\usepackage{authblk}
\usepackage{graphicx}
\usepackage{subcaption}
\usepackage{float} 

\usepackage[linesnumbered,ruled,vlined]{algorithm2e}
\newcommand{\scr}[1]{\mathscr{#1}}
\newcommand{\bb}[1]{\mathbb{#1}}
\newcommand{\al}[1]{\mathcal{#1}}
\newcommand{\rrm}[1]{\mathrm{#1}}
\newcommand{\I}{\mathds{1}}
\newtheorem{theorem}{Theorem}[section]

\newtheorem{proposition}[theorem]{Proposition}
\newtheorem{lemma}[theorem]{Lemma}

\newtheorem{remark}[theorem]{Remark}
\newtheorem{assumption}[theorem]{Assumption}

\providecommand{\keywords}[1]{%
  \vspace{1ex}%
  \noindent\hangindent=2.5em%
  \hangafter=0%
  \noindent\textbf{Keywords: }#1%
}

\title{Randomised Euler-Maruyama method for SDEs with H\"older continuous drift coefficient driven by $\alpha$-stable L\'evy process}
\author[a,1]{Jianhai Bao}
\author[b,2]{Haitao Wang}
\author[b,3]{Yue Wu}
\author[b,4]{Danqi Zhuang}
\affil[a]{Center for Applied Mathematics, Tianjin University, 300072 Tianjin, P.R. China}
\affil[b]{Department of Mathematics and Statistics, University of Strathclyde, Glasgow, G1 1XH, UK} \affil[1]{\href{mailto:jianhaibao@tju.edu.cn}{jianhaibao@tju.edu.cn}}
\affil[2]{\href{mailto:haitao.wang@strath.ac.uk}{haitao.wang@strath.ac.uk}}
\affil[3]{\href{mailto:yue.wu@strath.ac.uk}{yue.wu@strath.ac.uk}}
\affil[4]{\href{mailto:danqi.zhuang.2021@uni.strath.ac.uk}{danqi.zhuang.2021@uni.strath.ac.uk}}
\date{}
\begin{document}
\maketitle
\begin{abstract}
      In this paper, we examine the performance of randomised Euler-Maruyama (EM) method for additive time-inhomogeneous SDEs with an irregular drift driven by symmetric $\alpha$-table process, $\alpha\in (1,2)$. In particular, the drift is assumed to be $\beta$-H\"older continuous in time and bounded $\eta$-H\"older continuous in space with $\beta,\eta\in (0,1]$. The strong order of convergence of the randomised EM in $L^p$-norm is shown to be $1/2+(\beta \wedge (\eta/\alpha)\wedge(1/2))-\varepsilon$ for an arbitrary $\varepsilon\in (0,1/2)$, higher than the one of standard EM, which cannot exceed $\beta$. The result for the case of $\alpha \in (1,2)$ extends the almost optimal order of convergence of randomised EM obtained in \cite{bao2025randomisedeulermaruyamamethodsdes} for SDEs driven by Gaussian noise ($\alpha=2$), and coincides with the performance of EM method in simulating time-homogenous SDEs driven by $\alpha$-stable process considered in \cite{biswas2024explicit}. Various experiments are presented to validate the theoretical performance.
     \newline\newline
{\bf MSC} (2020): 65C30, 65C05, 60G51, 60H10, 60H35, 60L90 
\end{abstract}
\keywords{Randomised methods, Stochastic differential equations, L\'evy process, stochastic sewing lemma}

\section{Introduction}\label{intro}

There have been increasingly focused on stable L\'evy processes due to their ability to capture complex real-world behaviours that lie beyond the scope of classical Gaussian models. By allowing for heavy-tailed distributions and discontinuous sample paths with jumps, these processes generalize Brownian motion and provide more realistic representations of phenomena involving impulsive noise, discontinuities, or infinite variance \cite{levy,MR1406564,MR1739520}. However, this growing interest comes with notable challenges: the lack of closed-form density expressions (except for special cases like Cauchy and L\'evy) \cite{levy}; non-locality introduced by jumps, etc. Those challenges hinder progress in understanding towards stochastic process involving $\alpha$-stable L\'evy process.  For example, the comprehensive picture is recently completed \cite{Sivaoleg2020,zhang2021,distributionhuang,MR2945756,MR3445537,MR3795063,MR4235987,wu2021wellposednessdensitydependentsde} of the strong wellposedness of additive SDE 
	\begin{equation}\label{main SDE}
    \begin{cases}
			\rrm d X_t=b(t,X_t)\rrm dt+\rrm dL_t ,&\\
			X_0=x_0\in\bb R^d, &
		\end{cases}
	\end{equation}
	where  $L=(L_t)_{0\le t\le T}$ is a $d$-dimensional $\alpha$-stable L\'evy process 
 with $\alpha\in (0,2]$ defined on the probability space $(\Omega_L, \scr F^L,\bb P_L)$, and the drift coefficient $b:[0,T]\times \mathbb{R}^d\to \mathbb{R}^d$ is H\"older in space, while its Gaussian counterpart has been exhaustively studied in the last century \cite{MR568986,MR336813}. 
        
 Denote by $C_b^{\beta, \eta}([0,T]\times\bb R^d;\bb R^d)$ the space of functions being $\beta$-H\"older continuous in time and bounded $\eta$-H\"older continuous in space with $\beta,\eta\in (0,1]$, equipped with the norm 
	\begin{equation}\label{Cbetanorm}
		\begin{aligned}
			\left\|f\right\|_{C_b^{\beta, \eta}}=\sup_{t\in [0,T],x\in \mathbb{R}^d}|f(t,x)|+\sup_{x\in \mathbb{R}^d,s\ne t}\frac{|f(t,x)-f(s,x)|}{|t-s|^\beta}+
			\sup_{t\in [0,T],x\ne y}\frac{|f(t,x)-f(t,y)|}{|x-y|^\eta} 
		\end{aligned}
	\end{equation} 
    and
    by $C_b^{\eta}(\bb R^d;\bb R^d)$ the space of functions being bounded $\eta$-H\"older continuous in space with $\eta\in (0,1]$, equipped with the norm 
	\begin{equation*}
		\begin{aligned}
			\left\|f\right\|_{C_b^{ \eta}}=\sup_{x\in \mathbb{R}^d}|f(x)|+
			\sup_{x\ne y \in \mathbb{R}^d}\frac{|f(x)-f(y)|}{|x-y|^\eta}. 
		\end{aligned}
	\end{equation*} 
    In this paper, given the existence and pathwise uniqueness \cite{chenandzhanglevy} within the range $2\eta+\alpha>2$, we consider the numerical simulation to the additive time-inhomogeneous SDE \eqref{main SDE}, where $b\in C_b^{\beta, \eta}([0,T]\times\bb R^d;\bb R^d)$ and $\alpha \in (1,2]$.  For simplicity, let $T=1$, and use $C_b^{\beta,\eta}$ for short.

   For time-homogenous SDE \eqref{main SDE} with $b\in  C_b^{\eta}(\bb R^d,\bb R^d)$, the widely studied scheme is the Euler-Maruyama method (EM) on a fixed stepsize $1/n$, $n\in \mathbb{N}$, with the numerical solution given by
\begin{equation}\label{eqn:emcontinuous}
    \bar{X}^{(n)}_t=x_0+\int_0^t b\big(\bar{X}^{(n)}_{\kappa_n(s)}\big)\,\mathrm{d} s  +{L_t},
\end{equation}
where $\kappa_n(s):=\lfloor   ns \rfloor /n $ with $\lfloor   a\rfloor $ representing the largest integer that does not exceed $a$. Thanks to stochastic sewing lemma (SSL) \cite{Khoa2020} and its conditional shifted version \cite[Lemma 3.1]{butkovsky2024strongrateconvergenceeuler}, the order of convergence of EM method has been lifted to $(\eta/\alpha)\wedge (1/2)+1/2-\varepsilon$, for arbitrary $\varepsilon\in (0,1/2)$, compared to the previous attempts \cite{BaohuangEM,BaohuangzhangEM,HuangEMHOLDER,HangSuoEM,distributionhuang,EMlevyLi}. SSL sharpens estimates by systematically decomposing stochastic processes into a martingale component and a bounded variation process, enabling tighter control of residual terms through their orthogonal structure. Till now, SSL has shaped the understanding towards the numerical convergence for boundary scenarios of time-homogenous SDEs for Gaussian setting when drifts are H\"older \cite{MR4344136,ling2024strongEM}.   However, as argued in \cite{bao2025randomisedeulermaruyamamethodsdes}, for time-dependent SDE \eqref{main SDE}, the order of convergence of EM method can not go beyond $\beta$. We therefore follow a different approach that considers \emph{randomised} method.

Taking $\bar{Y}^{(n)}_t=\bar{X}^{(n)}_t-L_t$, one may treat it as the Euler simulation for the random ODE of the form 
\begin{equation}\label{eqn:randomODE1}
    \dot Y_t=b(Y_t+L_t):=\tilde b_L(t, Y_t), \qquad Y_0=x_0.
\end{equation}
where the random source $L_t$ determines the time irregularity of $\tilde b_L$. 
Note that when $\eta=1$,  random ODE \eqref{eqn:randomODE1} can be realised pathwisely with a unique pathwise solution, i.e., for a given and frozen realisation of $(L_t)_{t\in [0,1]}$,  random ODE \eqref{eqn:randomODE1} is of Carath\'eodory type and its solution exists uniquely. On the one hand, the Euler method,  however, fails to converge to the exact solution under this setting (see \cite[Section 2.3]{montecarlo} and \cite[Section 1.1]{Krusewu2017}), and SSL loses its power here as the randomness is frozen. On the other hand, incorporating extra randomness into the existing Euler scheme helps: instead of  using the grid point $\kappa_n(s)$, evaluating $\tilde b_L(s,\cdot)$ at a uniformly sampled point between two consecutive grid points $\kappa_n(s)$ and $\kappa_n(s)+1/n$.  The so-called randomised Euler method \cite{Krusewu2017} has been shown to converge to the exact solution with an order $1/2$ in $L_p$-sense\footnote{This error is measured in $L_p$-norm with respect to the extra randomness introduced to the scheme.} and $1/2-\varepsilon$ in almost-sure sense, regardless of the form of the $\alpha$-stable process $L_t$ is.
	
 Due to its ability to handle time-irregularity of equations, randomised methods have been applied to stochastic differential equations (SDEs) \cite{MR3986241,randomizedmilstein},  SDEs with L\'evy diffusion \cite{Pawel2024}, stochastic delay differential equation \cite{MR3880714,randomforsdde} and Mckean-Valsov equations \cite{biswas2024explicit}. It's worth noticing the drifts considered therein are all assumed to be Lipscthiz continuous in spatial variable. With the rising of SSL in handling spatial-irregularity, almost optimal order of strong convergence is obtained for simulating SDE \eqref{main SDE} with H\"older continuous drift subject to standard Brownian motion, via the randomised Euler-Maruyama method \cite{bao2025randomisedeulermaruyamamethodsdes}. In this paper, we examine the performance of randomised Euler-Maruyama method in solving SDE \eqref{main SDE}.
 
 Consider randomised Euler-Maruyama method, let $(\tau_i)_{i\in\bb N}$ be an i.i.d family of the uniform distribution on the interval $(0,1)$ random variables on filtered probability space $(\Omega_{\tau}, \scr F^{\tau}, (\scr F^{\tau}_i)_{i\in \bb N},
	\bb P_{\tau})$, where $\scr F_i^{\tau}$ is the $\sigma$-algebra generated by
	$\{\tau_1,\ldots, \tau_i\}$. We denote a new probability space $(\Omega, \scr F,
	\bb P):=(\Omega_L\times\Omega_{\tau}, \scr F^L\otimes\scr F^\tau,\bb P_L\otimes\bb P_\tau)$, and we can define a new discrete-time filtration $(\scr F_i)_{i\in \{1,\cdots,n\}}$ as $$\scr F_i:=\scr F_{i/n}^L\otimes\scr F^\tau_{i+1} \qquad \text{for any }i\in\{1,\cdots,n\}, $$ 
	where $\scr F_{i/n}^L$ is the  $\sigma$-algebra generated by $L_{i/n}$, and let $t_i:=i/n$ for $i\in \{1,\cdots,n\}$. Then we consider the randomised Euler-Maruyama approximation of SDE \eqref{main SDE} given by 
	\begin{equation}\label{dis EM SDE}
		 \begin{cases}			X^{(n)}_{t_i}&=X^{(n)}_{t_{i-1}}+b(t_{i-1}+\tau_i/n,X^{(n)}_{t_{i-1}})+\Delta^n_iL,\\
			X^{(n)}_0&=x_0,
		\end{cases}
	\end{equation}
	where $\Delta^n_iL:=L_{t_i}-L_{t_{i-1}}$. Next, we consider the continuous version of filtration $(\scr F_t)_{t\in[0,1]}$ denoted by $$\scr F_t:=\scr F_{t}^L\otimes\scr F^\tau_{\lfloor tn\rfloor+1}\qquad \text{for any }t\in[0,1],$$ where $\lfloor s\rfloor$ represents the largest integer that does not exceed $s$. Next, we denote $\kappa_n(s):=\lfloor ns\rfloor/n$ and $\kappa^\tau_n(s):=\lfloor ns\rfloor/n+\tau_{\lfloor ns\rfloor+1}$ and consider the continuous randomised Euler-Maruyama approximation
	\begin{equation}\label{con EM SDE}
		\begin{aligned}
			X^{(n)}_t=x_0+\int_{0}^{t}b(\kappa^\tau_n(s),X^{(n)}_{\kappa_n(s)})\rrm ds+L_t
		\end{aligned}
	\end{equation}

     With the randomised EM defined in \eqref{con EM SDE}, the strong order of convergence can be lifted to $1/2+\gamma-\varepsilon$ for arbitrary $\varepsilon\in (0,1/2)$, where $\gamma=\beta \wedge (\eta/\alpha) \wedge (1/2)$ within the range $$D_{\alpha,\beta, \eta}:=\{(\alpha,\beta, \eta): 2\eta+\alpha>2, (\beta+1)\alpha+\eta>2, \alpha \in (1,2), \beta, \eta \in (0,1]\},$$ see Theorem \ref{theo2.3} under the assumptions set in Section \ref{NMR}.  Our contributions are four-fold:
\begin{itemize}
    \item It is the first time applying randomised scheme in solving SDEs driven by $\alpha$-stable process, $ \alpha \in (1,2)$;
    \item Theorem \ref{theo2.3} for $\alpha \in (1,2)$ extends the work on the strong convergence of randomised EM \cite{bao2025randomisedeulermaruyamamethodsdes} for SDEs driven by Gaussian noise ($\alpha=2$);
    \item  Section \ref{sec:mainthm} extends the PDE-based error analysis of \cite{Pamen2017} from truncated to full symmetric $\alpha$-stable processes;
    \item Numerical benefits are validated via various examples in Section \ref{sec:numerical}.
\end{itemize}
Notations and priori estimations can be found in Section \ref{NMR} and \ref{sec:priori} respectively.
\section{Notation and Main Result}\label{NMR}
	In this section we explain the notation that is used throughout this paper and give the main result in this paper.
	\subsection{Notations}
	Let $\scr B(\bb R^d_0)$ be the Borel $\sigma$-algebra on $\bb R^d_0:=\bb R^d\setminus \{0\}$ and set $\nabla=\big(\frac{\partial}{\partial x_1},\cdots,\frac{\partial}{\partial x_d}\big)^\top$. In the following we introduce some space of function: 
	\begin{itemize}
		\item $C_b^\rho(\bb R^d;\bb R^m)$, $\rho\in(0,1)$ denotes the set of $\bb R^m$-valued bounded functions defined on $\bb R^d$, which are $\rho$-H\"older continuous function, equiped with the norm $$\left\|f\right\|_{C_b^{\rho}}=\sup_{x\in \mathbb{R}^d}|f(x)|+
		\sup_{x\ne y}\frac{|f(x)-f(y)|}{|x-y|^\rho},\quad f\in C_b^\rho(\bb R^d) .$$
		\item $C_b^{k+\rho}(\bb R^d;\bb R^m)$, $k\in \bb N,\ \rho\in(0,1]$ denotes the set of $\bb R^m$-valued bounded functions whose weak derivatives of order $0,1,\cdots, k$ all have representatives belonging to $\rho$-H\"older continuous function, equipped with the norm $$\left\|f\right\|_{C_b^{k+\rho}}=\sum_{i=0}^{k}\sup_{x\in \mathbb{R}^d}|\nabla^i f(x)|+
		\sup_{x\ne y}\frac{|\nabla^kf(x)-\nabla^kf(y)|}{|x-y|^\rho},\quad f\in C_b^{k+\rho}(\bb R^d) .$$
		\item $C^0_b(\bb R^d)$ denotes the space of all bounded (not necessarily continuous) Borel measurable functions in $\bb R^d$ equipped with the supremum norm
		$$\left\| f\right\| _{C^0_b}=\sup_{x\in\bb R^d}\left| f(x)\right| ,\quad f\in C^0_b(\bb R^d). $$
		\item $C([s,t];C^\beta_b(\bb R^d;\bb R^d)),\ \beta>0$ with $s<t$, the collection of $C^\beta_b(\bb R^d;\bb R^d)$-valued continuous
		function over time interval $[s,t]$, equipped with the norm
		$$\left\|f \right\|_{[s,t]}=\sup_{s \le u \le t}\left\|f(u,\cdot) \right\|_{C_b^\beta} . $$
		Note that $C([s,t];C_b^{\beta}(\bb R^d;\bb R^d))$ coincides with $C_b^{0,\beta}([s,t]\times \mathbb{R}^d; \mathbb{R}^d)$ mentioned in Section \ref{intro}, where the former one will be adopted in the later analysis.
		\item $C^\infty_c(\bb R^d)$ denotes the space of all infinitely differentiable $\bb R$-valued functions with compact	support contained in $\bb R^d$.
		\item $C_b^{\beta, \eta}([s,t]\times\bb R^d;\bb R^d)$ with $\beta,\eta\in (0,1]$, is the space of all continuous functions $g:[s,t]\times \mathbb{R}^d\to \mathbb{R}^d$ which are $\alpha$-H\"older continuous in time and bounded $\beta$-H\"older continuous in space, equipped with the norm defined in equation \eqref{Cbetanorm}.
	\end{itemize}
	\begin{remark}
		Let $g\in C_b^{\beta, \eta}([s,t]\times\bb R^d;\bb R^d)$. For any $u\in[s,t]$ we have the estimate:
		\begin{equation}\label{gprop}
			\begin{aligned}
				|g(u,x)-g(u,y)|&=|g(u,x)-g(u,y)|\big(\I_{\{|x-y|\le 1\}}+ \I_{\{|x-y|\ge 1\}}\big)
				\\&\le \|g\|_{C_b^{\beta, \eta}}|x-y|^\eta\I_{\{|x-y|\le 1\}}+2\|g\|_{C_b^{\beta, \eta}}\I_{\{|x-y|\ge 1\}}\\&\le 2\|g\|_{C_b^{\beta, \eta}}\big(|x-y|^\eta\wedge 1\big).
			\end{aligned}
		\end{equation}
	\end{remark}
	For any function $f:\bb R^d\to\bb R^d$, we may represent it as the row vector $f=(f_i)_{i\in[d]},$ where $[d]:=\{1,\ldots,d\}$.
	For a random variable $\xi$, a sub-$\sigma$-algebra $\scr G\subset\scr F$, and $p\ge1$ we introduce the quantity
	
	$$\left\|\xi \right\|_{L^p(\Omega)|\scr G}:=\big(\bb E[|\xi|^p|\scr G]\big)^{\frac{1}{p}}, $$
	which is a $\scr G$-measurable non-negative random variable. It is clear that
	$$\left\|\xi \right\|_{L^p(\Omega)}=\left\| \left\|\xi \right\|_{L^p(\Omega)|\scr G} \right\| _{L^p(\Omega)}. $$
	For $\scr F_s\in\{\scr F_t\}_{t\in[0,1]}$, we define the expectation of the random variable $\xi$ under $\scr F_s$ by 
	$$\bb E_s[\xi]=\bb E[\xi|\scr F_s].$$
	Without ambiguity, we write $\bb E_s\xi$ for short. For expectations (resp. conditional expectations) on $(\Omega_L, \scr F^L,\bb P_L)$ and $(\Omega_{\tau}, \scr F^{\tau},
	\bb P_{\tau})$, we will use $\bb E^L$ (resp. $\bb E_s^L$) and $\bb E^\tau$ (resp. $\bb E^\tau_s$) respectively. 
	
	\subsection{ The L\' evy process and assumptions}
    	Recall that $(L_t)_{t\ge0}$ is a L\' evy process on  $(\Omega_L, \scr F^L,\bb P_L)$ (i.e., it is continuous in probability, it has stationary increments, c\`adl\` ag trajectories, $L_t -L_s$ is independent of $\scr F_s$, $0 \le s \le t$, and $L_0 = 0$) with the additional property that its characteristic function is given by
	$$
	\bb E^L\rrm e^{i\left\langle L_t,u \right\rangle } = \rrm e^{-t\Psi(u)},\quad \Psi(u) =-\int_{\bb R^d}\big(\rrm e^{i\left\langle u,z\right\rangle }-1-i\left\langle u,z\right\rangle \I_{\{|z|\le1\}}(z)\big)\upsilon(\rrm dy),
	$$
	where $u \in\bb R^d$, $t \ge0$ and $\upsilon$ is a measure such that
	$$\upsilon(D)=\int_{\bb S}\mu(\rrm d\xi)\int_{0}^{\infty}\I_D(r\xi)\frac{\rrm dr}{r^{1+\alpha}},\quad D\in\scr B(\bb R^d),$$
	$\bb S=\{x\in\bb R^d,|x|=1\}$ and $\mu$ is a finite symmetric measure on $(\bb S, \scr B(\bb S))$, i.e. $\mu(A) = \mu(-A)$, for any $A \in\scr B(\bb S)$. In this paper, we  consider $(L_t)_{t\ge0}$ being $d$-dimensional symmetric $\alpha$-stable process so that $\Psi$ has the following representation:
	$$\Psi(u)=\int_{\bb R^d}[1-\cos\left\langle u,x\right\rangle ]\upsilon(\rrm dx).$$ 
It is easy to see that the L\'evy process $L_t$ has the following two properties.
	\begin{itemize}
		\item Scaling property: for $t> 0$, let $\mu_t$ denotes the law of $L_t$ , $t>0$, then, $\mu_t (A) = \mu_1 (t^{1/\alpha} A)$, for any $A \in\scr B(\bb R^d )$, $t > 0$.
		\item  For any $\gamma>\alpha$, we have, $$\int_{|z|\le 1}|z|^\gamma\upsilon(\rrm dz)<\infty.$$
	\end{itemize}
	By the L\'evy-It\^o decomposition, we can write that 
$$L_t=\int_{0}^{t}\int_{|z|\le1}z\tilde N(\rrm ds,\rrm dz)+\int_{0}^{t}\int_{|z|>1}z N(\rrm ds,\rrm dz),\quad t\ge0,$$
	where $N$ is the Poisson random measure defined as follows:
	$$N([0,t],U)=\sum_{0\le s\le t}\I_{U}(L_s-L_{s^-}),\qquad U\in\scr B(\bb R^d_0),\ t\ge0, $$ and $\tilde N$ defined by  $$\tilde N(\rrm ds,\rrm dz):=N(\rrm ds,\rrm dz)-\upsilon(\rrm dz)\rrm ds. $$
We will use $P_t$ to denote the Markov transition semigroup of the L\'evy process $L$ with  infinitesimal generator $\al L$, i.e. $$P_tf(x)=\bb E^Lf(x+L_t), \quad f\in C^0_b(\bb R^d), $$
	and the operator $\al L$ can be rewritten as 
	$$\al Lf(x)=\int_{\bb R^d}\big(f(x+z)-f(x)-\I_{\{|z|\le1\}}\left\langle z,\nabla f(x) \right\rangle \big)\upsilon(\rrm dz),\quad f\in C^\infty_c(\bb R^d).$$
	Because $P$ is the Markovian semigroup of symmetric $\alpha$-stable process, for any measurable function $f\in C_b^{\beta,\eta}$ and any $\scr F^L_s$-measurable random vector $\xi$, then 
	$$\bb E^L_s\big[f(r,L_t+\xi)\big]=\bb E^L_s\big[f(r,(L_t-L_s)+L_s+\xi)\big]=\bb E^L\big[f(r,(L_t-L_s)+x+\xi)\big]|_{x=L_s}=P_{t-s}f(r,L_s+\xi) \quad \text{for any } r\in[0,1].$$ 
	Following \cite{butkovsky2024strongrateconvergenceeuler,chenandzhanglevy}, we impose the following assumptions on $P$ and $\al L$.
	\begin{assumption}\label{H1}\cite[Assumption H1]{butkovsky2024strongrateconvergenceeuler}
		There exists a constant $M$ such that for any $f\in C^0_b(\bb R^d)$, one has 
		$$\left\|\nabla P_tf \right\|_{C^0_b}\le Mt^{-1/\alpha}\left\|f \right\|_{C^0_b},\quad t\in(0,1]. $$
	\end{assumption}
	\begin{assumption}\label{H2}\cite[Assumption H2]{butkovsky2024strongrateconvergenceeuler}
		For $\delta\in \{0,1\}$ and any $\varepsilon>0$, there exists a
		constant $M$ such that for any $f\in C_b^{\alpha+\delta+\varepsilon}(\bb R^d)$ vanishing at infinity, one has 
		$$
			\left\|\al Lf \right\|_{C^\delta_b}\le Mt^{-1/\alpha}\left\|f \right\|_{C_b^{\delta+\alpha+\varepsilon}},\quad t\in(0,1]. $$
	\end{assumption}
	\begin{assumption}\label{H3}\cite[Assumption H3]{butkovsky2024strongrateconvergenceeuler}
		For any $p\in(0,\alpha)$, $\varepsilon>0$, there exists a constant $M$ such that
		$$\bb E^L\big[|L_t|^p\wedge1\big]\le Mt^{p/\alpha-\varepsilon},\quad t\in(0,1].$$
	\end{assumption}
	
	Also, we know the semigroup has these properties.
	
	\begin{proposition}\cite[Proposition 3.3]{butkovsky2024strongrateconvergenceeuler}
		If $L$ is a $d$-dimensional symmetric $\alpha$-stable process and Assumptions \ref{H1}, \ref{H2} hold, for every $\varepsilon>0,\  \beta,\eta\in(0,1],\ \rho$ as below,  and $\mu\in(\varepsilon\vee (\eta-\rho)/\alpha,1+\varepsilon]$, then there exists $C$ depending on $\alpha,\varepsilon,\eta,\rho,\mu$ such that for any $f\in C^{\beta,\eta}_b$:
		\begin{align}
			&\left\|P_tf(r,\cdot) \right\|_{C_b^\rho}\le C\left\| f\right\| _{C_b^{\beta,\eta}}t^{\frac{(\eta-\rho)\wedge0}{\alpha}} \qquad 0\le t, r\le 1,\ \rho\ge0,\\
			&\left\|P_tf(r,\cdot)-P_sf(r,\cdot) \right\|_{C_b^\rho}\le C\left\| f\right\| _{C_b^{\beta,\eta}}(t-s)^{\mu-\varepsilon}s^{\frac{(\eta-\rho)}{\alpha}-\mu} \quad 0\le s<t\le1 ,\ 0\le r\le1,\ \rho\in\{0,1\}.
		\end{align}
	\end{proposition}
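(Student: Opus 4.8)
The plan is to establish the two displayed bounds in turn: the spatial smoothing estimate first, then feed it into a generator representation to obtain the time-regularity estimate. Fix $r\in[0,1]$ and abbreviate $g:=f(r,\cdot)$, so that $\|g\|_{C_b^\eta}\le\|f\|_{C_b^{\beta,\eta}}$ uniformly in $r$; the time exponent $\beta$ then plays no further role. The one structural fact I would exploit repeatedly is that $P_t$ is translation invariant, $P_t\big(k(\cdot+a)\big)=(P_tk)(\cdot+a)$, because $L$ is a L\'evy process.

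For the first estimate I would split on $\rho\le\eta$ versus $\rho>\eta$. If $\rho\le\eta$ the target exponent is $0$, and the bound follows from the contraction property $\|P_tg\|_{C_b^\eta}\le\|g\|_{C_b^\eta}$ --- valid because $P_tg(x)=\bb E^Lg(x+L_t)$ is an average of translates, so neither the supremum nor the H\"older seminorm grows --- together with the embedding $\|h\|_{C_b^\rho}\le C\|h\|_{C_b^\eta}$ for bounded $h$. The genuine case is $\rho>\eta$, where the gain of regularity must be quantified. The key step is to lift Assumption \ref{H1} from the $C_b^0$-scale to the $C_b^\eta$-scale via translation invariance: since $\nabla P_tk(x)-\nabla P_tk(y)=\nabla P_t\big(k-k(\cdot+(y-x))\big)(x)$ and $\|k-k(\cdot+(y-x))\|_{C_b^0}\le[k]_{C_b^\eta}|x-y|^\eta$, Assumption \ref{H1} yields $[\nabla P_tk]_{C_b^\eta}\le Mt^{-1/\alpha}[k]_{C_b^\eta}$ and hence $\|P_tg\|_{C_b^{1+\eta}}\le Ct^{-1/\alpha}\|g\|_{C_b^\eta}$. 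Interpolating this against $\|P_tg\|_{C_b^\eta}\le\|g\|_{C_b^\eta}$ gives $\|P_tg\|_{C_b^\rho}\le Ct^{(\eta-\rho)/\alpha}\|g\|_{C_b^\eta}$ on $\eta\le\rho\le1+\eta$; repeating the lift at higher derivative orders together with $P_t=(P_{t/m})^m$ extends it to every $\rho\ge0$, which is exactly the first inequality.

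For the second estimate, note that it is claimed only for $\rho\in\{0,1\}$, matching the range $\delta\in\{0,1\}$ of Assumption \ref{H2}; this signals a generator-based argument. For $0<s<t\le1$ the curve $u\mapsto P_ug$ is smooth, so $P_tg-P_sg=\int_s^t\al LP_ug\,\rrm du$, and Assumption \ref{H2}, with its free parameter chosen equal to $\alpha\varepsilon$, gives $\|\al LP_ug\|_{C_b^\rho}\le M\|P_ug\|_{C_b^{\rho+\alpha+\alpha\varepsilon}}$. Applying the first inequality with the exponent $\rho+\alpha+\alpha\varepsilon>\eta$ bounds the right-hand side by $C\|g\|_{C_b^\eta}\,u^{(\eta-\rho)/\alpha-1-\varepsilon}$, so it remains to show that $\int_s^t u^{(\eta-\rho)/\alpha-1-\varepsilon}\,\rrm du\le C(t-s)^{\mu-\varepsilon}s^{(\eta-\rho)/\alpha-\mu}$ for every admissible $\mu$. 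I would obtain this from the factorisation $u^{(\eta-\rho)/\alpha-1-\varepsilon}=u^{(\mu-\varepsilon)-1}\,u^{(\eta-\rho)/\alpha-\mu}$: since $\mu>(\eta-\rho)/\alpha$ the second factor is decreasing and is $\le s^{(\eta-\rho)/\alpha-\mu}$ on $[s,t]$, while $\int_s^t u^{(\mu-\varepsilon)-1}\rrm du\le(\mu-\varepsilon)^{-1}(t-s)^{\mu-\varepsilon}$ because $\mu-\varepsilon\in(0,1]$ (as $\varepsilon<\mu\le1+\varepsilon$) and $v\mapsto v^{\mu-\varepsilon}$ is subadditive. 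This produces exactly the claimed pair of exponents.

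The step I expect to require the most care is the application of Assumption \ref{H2} to $P_ug$, since \ref{H2} is phrased for functions vanishing at infinity whereas $P_ug$ is merely bounded. I would resolve this by observing that the long-range part of $\al L$, namely $\int_{|z|>1}\big(\phi(\cdot+z)-\phi(\cdot)\big)\upsilon(\rrm dz)$, is a bounded operator on $C_b^0$ because $\upsilon(\{|z|>1\})<\infty$, while the singular part near the origin is controlled by the local $C_b^{\rho+\alpha+\alpha\varepsilon}$-norm exactly as in \ref{H2}; hence the bound in \ref{H2} extends from vanishing to all bounded $\phi\in C_b^{\rho+\alpha+\alpha\varepsilon}$, and in particular to $\phi=P_ug$. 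One should also justify the differentiation $\partial_uP_ug=\al LP_ug$ and the integral representation for merely H\"older $g$, which is legitimate because $P_ug$ is smooth and bounded for every $u>0$ by the first inequality. The remaining computations are the routine interpolation and integral estimates indicated above.
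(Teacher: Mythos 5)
The paper itself contains no proof of this proposition: it is imported verbatim from the cited reference \cite{butkovsky2024strongrateconvergenceeuler}, so there is no internal argument to compare yours against; judged on its own merits, your proof is correct and self-contained, and it follows what is almost certainly the same route as the source. Your two-stage plan is sound: (i) the smoothing bound from Assumption \ref{H1}, upgraded from the $C_b^0$-scale to the $C_b^\eta$-scale by translation invariance (the identity $\nabla P_tk(x)-\nabla P_tk(y)=\nabla P_t\big(k-k(\cdot+(y-x))\big)(x)$ is exactly the right trick), iterated through the semigroup property for higher derivatives and combined with H\"older interpolation to cover all $\rho\ge0$; and (ii) the representation $P_tg-P_sg=\int_s^t\al LP_ug\,\rrm du$, Assumption \ref{H2} with its parameter set to $\alpha\varepsilon$, the first bound applied at regularity $\rho+\alpha+\alpha\varepsilon$, and the factorisation $u^{(\eta-\rho)/\alpha-1-\varepsilon}=u^{\mu-\varepsilon-1}u^{(\eta-\rho)/\alpha-\mu}$, where your checks that $\mu>(\eta-\rho)/\alpha$ (monotone second factor) and $\mu-\varepsilon\in(0,1]$ (subadditivity of $v\mapsto v^{\mu-\varepsilon}$) are precisely the bookkeeping that produces the stated exponents. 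Three remarks, none of which is a genuine gap. First, Assumption \ref{H2} as reproduced in this paper carries a spurious factor $t^{-1/\alpha}$ on its right-hand side (the function $f$ there does not depend on $t$); you implicitly use the clean form $\|\al Lf\|_{C_b^\delta}\le M\|f\|_{C_b^{\delta+\alpha+\varepsilon}}$, which is what the stated inequality reduces to at $t=1$ and what the original reference asserts, but this normalisation deserves a sentence. Second, your extension of \ref{H2} from functions vanishing at infinity to all of $C_b^{\delta+\alpha+\alpha\varepsilon}$ is indeed needed (since $P_ug$ does not vanish at infinity) and your splitting of $\al L$ into the finite-mass far part and the Taylor-controlled near part, using $\int_{|z|\le1}|z|^{\alpha+\alpha\varepsilon}\upsilon(\rrm dz)<\infty$, is a correct way to get it; alternatively Dynkin's formula for $C_b^2$ functions handles the representation directly. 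Third, the H\"older interpolation inequality you invoke is standard for non-integer exponents and the integer cases can be sidestepped (or treated in the Zygmund scale), which is worth noting since the first estimate is claimed for every $\rho\ge0$.
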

	
	\subsection{The main result}
	Define the simplex $\Delta_{S,T}:=\{(s,t)|S\le s\le t\le T, s-(t-s)\ge S\}$. For a function $f:\Delta_{S,T}\to\bb R^d$, and a triplet of time $(s,u,t)$ such that $S\le s\le u\le t\le T$, we define $$\delta f_{s,u,t}:=f_{s,t}-f_{s,u}-f_{u,t}.$$ 
	Next we give the stochastic sewing lemma.
	\begin{theorem}[Stochastic sewing lemma {\cite[Theorem 2.4]{Khoa2020}}]\label{classical sew}
		Consider a probability space $(\Omega, \mathcal{F},\{\mathcal{F}_t\}_{t\geq 0},\bb P)$. Let $p\in[2,\infty)$, $0\leq S\leq T\le1$ and let $(A_{s,t})_{(s,t)\in\Delta_{S,T}}$ be a family of random variables in $ L^p(\Omega;\bb R^d)$ such that $A_{s,t}$ is $\scr F_t$-measurable. Suppose that for some $\varepsilon_1,\varepsilon_2>0$ and $C_1,C_2$ the bounds
		\begin{align}
			& \|A_{s,t}\|_{L^p(\Omega;\bb R^d)}  \leq C_1|t-s|^{1/2+\varepsilon_1},\label{SSL1}
			\\
			&\|\bb E[\delta A_{s,u,t}|\scr F_s]\|_{L^p(\Omega;\bb R^d)}=:\|\bb E^s[\delta A_{s,u,t}]\|_{L^p(\Omega;\bb R^d)}  \leq C_2 |t-s|^{1+\varepsilon_2},\label{SSL2}
		\end{align}
		hold for all $S\leq s\leq u\leq t\leq T$.
		
		Then there exists a unique (up to modification) $\{\scr F_t\}$-adapted 
		process $\al A:[0,1]\to L^p(\Omega;\bb R^d)$ such that $\al A_0=0$ and the following bounds hold for some positive constants $K_1,K_2$. For any $S\leq s\leq u\leq t\leq T$, one has:
		\begin{align}
			&\|\al A_t	-\al A_s-A_{s,t}\|_{L^p(\Omega;\bb R^d)}  \leq K_1 |t-s|^{1/2+\varepsilon_1}+K_2 |t-s|^{1+\varepsilon_2},\label{SSL1 cA}
			\\
			&\|\bb E_s\big[\al A_t	-\al A_s-A_{s,t}\big]\|_{L^p(\Omega;\bb R^d)}  \leq K_2|t-s|^{1+\varepsilon_2},\label{SSL2 cA}.
		\end{align}
		Moreover, there exists a positive constant $K$ depending only on $\varepsilon_1,\varepsilon_2$ and $d$ such that for any  $S\leq s\leq u\leq t\leq T$, $\al A$ satisfies the bound:
		\begin{equation}\label{SSL3 cA}
			\|\al A_t-\al A_s\|_{L^p(\Omega;\bb R^d)}  \leq  KpC_1 |t-s|^{1/2+\varepsilon_1}+KpC_2 |t-s|^{1+\varepsilon_2},\quad (s,t)\in[0,1].
		\end{equation}
	\end{theorem}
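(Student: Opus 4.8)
The plan is to realise $\al A$ as the $L^p(\Omega;\bb R^d)$-limit of Riemann-type sums taken along successive dyadic refinements of the interval, and to read off the two separate rates in \eqref{SSL1 cA} from the two hypotheses \eqref{SSL1} and \eqref{SSL2} by decomposing each refinement correction into a conditionally centred part and a conditionally predictable part. The essential gain over a deterministic (pathwise) summation is that the centred part is a sum of martingale differences, so the discrete Burkholder--Davis--Gundy (BDG) inequality replaces a linear sum of $L^p$-norms by the square root of a sum of squares; this is what allows the mere square-root rate $1/2+\varepsilon_1$ in \eqref{SSL1} to control the centred corrections, whereas a pathwise estimate would require a full rate exceeding $1$.

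Fix $S\le s\le t\le T$. For $n\ge0$ let $\{t^n_k\}_{k=0}^{2^n}$ be the uniform dyadic partition of $[s,t]$ with mesh $|t-s|2^{-n}$, write $m^n_k$ for the midpoint of $[t^n_k,t^n_{k+1}]$, and set $A^n_{s,t}:=\sum_{k=0}^{2^n-1}A_{t^n_k,t^n_{k+1}}$, so $A^0_{s,t}=A_{s,t}$. Splitting each level-$n$ interval at its midpoint and recalling $\delta A_{u,v,w}=A_{u,w}-A_{u,v}-A_{v,w}$ gives the telescoping identity
\begin{equation*}
    A^{n+1}_{s,t}-A^n_{s,t}=-\sum_{k=0}^{2^n-1}\delta A_{t^n_k,m^n_k,t^n_{k+1}}.
\end{equation*}
I would then write each summand as $\delta A_{t^n_k,m^n_k,t^n_{k+1}}=\bb E_{t^n_k}\big[\delta A_{t^n_k,m^n_k,t^n_{k+1}}\big]+D^n_k$ with centred remainder $D^n_k:=\delta A_{t^n_k,m^n_k,t^n_{k+1}}-\bb E_{t^n_k}\big[\delta A_{t^n_k,m^n_k,t^n_{k+1}}\big]$. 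Since $A_{u,v}$ is $\scr F_v$-measurable, each $D^n_k$ is $\scr F_{t^n_{k+1}}$-measurable with $\bb E_{t^n_k}[D^n_k]=0$, and because $t^n_{k+1}$ is the left endpoint of the next interval the partial sums $\big(\sum_{k\le j}D^n_k\big)_j$ form a discrete martingale for the filtration $\big(\scr F_{t^n_k}\big)_k$.

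The predictable part is bounded directly by the triangle inequality and \eqref{SSL2}, giving $\big\|\sum_k \bb E_{t^n_k}[\delta A_{t^n_k,m^n_k,t^n_{k+1}}]\big\|_{L^p}\le C_2|t-s|^{1+\varepsilon_2}2^{-n\varepsilon_2}$. For the centred part the discrete BDG inequality together with Minkowski's inequality and \eqref{SSL1} (applied to each of the three increments making up $\delta A$) yields
\begin{equation*}
    \Big\|\sum_k D^n_k\Big\|_{L^p}\lesssim \sqrt p\Big(\sum_k\|D^n_k\|_{L^p}^2\Big)^{1/2}\lesssim \sqrt p\,C_1\Big(2^n\big(|t-s|2^{-n}\big)^{1+2\varepsilon_1}\Big)^{1/2}=\sqrt p\,C_1\,|t-s|^{1/2+\varepsilon_1}2^{-n\varepsilon_1}.
\end{equation*}
Both bounds are geometric in $n$, so $(A^n_{s,t})_n$ is Cauchy in $L^p$; I would set $\al A_t-\al A_s:=\lim_n A^n_{s,t}$ (with $\al A_0=0$), verify additivity and $\scr F_t$-adaptedness from the construction, and obtain \eqref{SSL1 cA} by summing the two geometric series starting from $A^0_{s,t}=A_{s,t}$. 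Estimate \eqref{SSL2 cA} then follows because conditioning on $\scr F_s$ annihilates every martingale increment and leaves only the predictable bound, while \eqref{SSL3 cA} comes from combining \eqref{SSL1 cA} with \eqref{SSL1}, the factor $p$ absorbing the order-$\sqrt p$ BDG constant. Uniqueness is obtained by applying the very same dyadic-plus-BDG estimate to the (additive) difference of two candidate processes and letting the mesh tend to zero, which forces all its increments to vanish.

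I expect the crux to be the martingale step: one must check carefully that the centred corrections $D^n_k$ really are martingale differences along the partition — this is precisely where hypothesis \eqref{SSL2}, which controls only the \emph{conditional} expectation of the defect $\delta A$, enters — so that BDG is legitimate and produces the square-root-of-sum-of-squares gain responsible for lifting the required regularity of $A$ from exponent $1$ down to $1/2$. A secondary but necessary chore is tracking the $p$-dependence through the (order-$\sqrt p$) BDG constant to recover the factor $Kp$ in \eqref{SSL3 cA}, and checking that the two-parameter limits glue into a single adapted process unique up to modification.
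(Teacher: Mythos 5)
Your proposal is sound: the paper itself does not prove this result (it is imported verbatim from the cited reference \cite[Theorem 2.4]{Khoa2020}), and your reconstruction follows essentially the same route as that reference's proof — dyadic refinement of $[s,t]$, the telescoping identity $A^{n+1}_{s,t}-A^n_{s,t}=-\sum_k\delta A_{t^n_k,m^n_k,t^n_{k+1}}$, a Doob-type split of each defect into its $\scr F_{t^n_k}$-conditional expectation (controlled by \eqref{SSL2} via the triangle inequality) plus a martingale-difference remainder (controlled by \eqref{SSL1} via discrete BDG with constant $O(\sqrt p)$), geometric summation, and uniqueness by running the same estimate on the difference of two candidates. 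The two points you flag as the crux — the martingale property of the centred corrections $D^n_k$ and the $O(\sqrt p)$ BDG constant yielding the factor $Kp$ in \eqref{SSL3 cA} — are exactly the load-bearing steps, and your verification of both is correct.
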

	However, a symmetric $\alpha$-stable process $(L_t)_{t \ge 0}$ admits finite moments only up to order $k \le \alpha$. Since our goal is to estimate the convergence rate between $X_t$ and its approximation $X^{(n)}_t$ in the norm $\| \cdot \|_{L^p(\Omega)}$ for some $p \ge \alpha$, the standard moment-based techniques are insufficient. To overcome this challenge, we will also use a conditional shifted version of the stochastic sewing lemma\textemdash  a refinement of the classical SSL adapted to our framework.

	\begin{theorem}(Conditional shifted version of SSL {\cite[Lemma 3.1]{butkovsky2024strongrateconvergenceeuler}})	\label{condition sew}
		Let $0\le S\le T\le1$, $p\in[2,\infty)$ and $(A_{s,t})_{(s,t)\in\Delta_{S,T}}$ be a family of random variables in $L^p(\Omega,\bb R^d)$ such that $A_{s,t}$ is $\scr F_t$-measurable. Let $\scr G\subset \scr F_S$ is a $\sigma$-algebra. There exist $\varepsilon_1, \varepsilon_2, \varepsilon_3>0$ and $\scr G$ random variables $\Gamma_1, \Gamma_2, \Gamma_3\ge0$ such that 
		\begin{align}
			&\big\|A_{s,t} \big\|_{L^p(\Omega)|\scr G}\le \Gamma_1\left|t-s \right|^{1/2+\varepsilon_1},  \\
			&\big\|\bb E_{s-(t-s)}\delta A_{s,u,t} \big\| _{L^p(\Omega)|\scr G}\le \Gamma_2\left|t-s \right| ^{1+\varepsilon_2}+\Gamma_3\left|t-s \right| ^{1+\varepsilon_3},
		\end{align}
		for all $(s,t)\in\Delta_{S,T}$ and $u=(s+t)/2$. 
		
		Further, there is a process $\al A=\{\al A_t, t\in[S,T]\}$ such that for any $S\le s\le t\le T$, it yields that $$\al A_t-\al A_s=\lim\limits_{m\to\infty}\sum_{i=1}^{m-1}A_{s+i\frac{t-s}{m},s+(i+1)\frac{t-s}{m}}\quad\text{in probability.}$$
		Then there exist deterministic constants $K_1,K_2,K_3>0$ only depending on $\varepsilon_1,\varepsilon_2,\varepsilon_3,p$ and $d$ such that for any $S\le s\le t\le T$, we obtain that $$\left\|\al A_t-\al A_s \right\|_{L^p(\Omega)|\scr G}\le K_1\Gamma_1\left|t-s \right|^{1/2+\varepsilon_1}+K_2\Gamma_2\left|t-s \right| ^{1+\varepsilon_2}+K_3\Gamma_3\left|t-s \right| ^{1+\varepsilon_3} .$$
	\end{theorem}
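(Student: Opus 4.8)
The plan is to realise $\al A$ through the dyadic (midpoint) Riemann sums featured in the statement and to estimate, one refinement at a time, the correction each refinement produces, keeping every bound in the conditional norm $\|\cdot\|_{L^p(\Omega)|\scr G}$. Fix $(s,t)\in\Delta_{S,T}$ and, for $m\ge0$, let $r^m_k:=s+k(t-s)2^{-m}$ for $k=0,\dots,2^m$ denote the dyadic partition of $[s,t]$, put $\ell_m:=(t-s)2^{-m}$, and set $A^m:=\sum_{k=0}^{2^m-1}A_{r^m_k,r^m_{k+1}}$. Inserting the midpoints $u^m_k:=(r^m_k+r^m_{k+1})/2$ gives $A^{m+1}-A^m=-\sum_{k=0}^{2^m-1}\delta A_{r^m_k,u^m_k,r^m_{k+1}}$. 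Since the asserted limit $\al A_t-\al A_s=\lim_m A^m$ exists in probability, it may be evaluated along this dyadic subsequence, and the conclusion follows once $\|A^0\|_{L^p(\Omega)|\scr G}$ and $\sum_{m\ge0}\|A^{m+1}-A^m\|_{L^p(\Omega)|\scr G}$ are controlled.

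The base term is immediate, as $A^0=A_{s,t}$ and the first hypothesis gives $\|A^0\|_{L^p(\Omega)|\scr G}\le\Gamma_1|t-s|^{1/2+\varepsilon_1}$. For each refinement I would decompose every summand into a conditional-expectation part and a centred remainder,
\[
\delta A_{r^m_k,u^m_k,r^m_{k+1}}=\bb E_{r^m_k-\ell_m}\big[\delta A_{r^m_k,u^m_k,r^m_{k+1}}\big]+R^m_k,
\]
where $R^m_k$ is the difference of the two sides; the shifted conditioning time $r^m_k-\ell_m=r^m_{k-1}\ge s-(t-s)\ge S$ is admissible precisely because $(s,t)$ lies in the shifted simplex $\Delta_{S,T}$. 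The conditional-expectation part is handled directly by the second hypothesis: each of the $2^m$ terms is bounded in $\|\cdot\|_{L^p(\Omega)|\scr G}$ by $\Gamma_2\ell_m^{1+\varepsilon_2}+\Gamma_3\ell_m^{1+\varepsilon_3}$, so summing over $k$ and then over $m$ yields two geometric series in $2^{-m\varepsilon_2}$ and $2^{-m\varepsilon_3}$ of total order $\Gamma_2|t-s|^{1+\varepsilon_2}+\Gamma_3|t-s|^{1+\varepsilon_3}$.

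The heart of the matter is the remainder $\sum_k R^m_k$, and here the shift is indispensable. Each $R^m_k$ is $\scr F_{r^m_{k+1}}$-measurable and centred given $\scr F_{r^m_{k-1}}$; consequently, for any two same-parity indices $j<k$ one has $\scr F_{r^m_{j+1}}\subset\scr F_{r^m_{k-1}}$, so that the even-indexed and the odd-indexed subsequences each form a martingale difference sequence for the filtration $(\scr F_{r^m_{k+1}})_k$. Treating the two parities separately, I would apply a conditional Burkholder--Davis--Gundy inequality---legitimate because $\scr G\subset\scr F_S$ sits below every centring time used---followed by Minkowski's inequality in $\ell^2$ versus $L^{p/2}$, valid for $p\ge2$, to obtain $\|\sum_kR^m_k\|_{L^p(\Omega)|\scr G}\lesssim_p\big(\sum_k\|R^m_k\|_{L^p(\Omega)|\scr G}^2\big)^{1/2}$. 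Bounding $\|R^m_k\|_{L^p(\Omega)|\scr G}\lesssim\Gamma_1\ell_m^{1/2+\varepsilon_1}$ via the first hypothesis applied to the three germs constituting $\delta A$, and counting the $2^m$ terms, makes the square root of order $\Gamma_1|t-s|^{1/2+\varepsilon_1}2^{-m\varepsilon_1}$, once more geometric in $m$. Summing the base term together with the three series over $m\ge0$ then produces the claimed estimate with constants $K_1,K_2,K_3$ depending only on $\varepsilon_1,\varepsilon_2,\varepsilon_3,p$ and $d$.

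I expect the martingale step to be the principal obstacle: one must check with care that, after the even/odd splitting forced by the length-$\ell_m$ shift, the remainders are genuinely conditionally centred given a $\sigma$-algebra containing all earlier same-parity terms, so that a true martingale---not merely a centred---structure underlies the conditional BDG inequality, and one must keep the conditioning on $\scr G$ coherent throughout, which succeeds only because $\scr G\subset\scr F_S$ lies below every centring time permitted by $\Delta_{S,T}$.
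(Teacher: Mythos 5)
The paper itself does not prove this theorem: it is imported verbatim as \cite[Lemma 3.1]{butkovsky2024strongrateconvergenceeuler}, so there is no internal proof to compare against. Your reconstruction---dyadic midpoint refinement (for which the midpoint-only hypothesis on $\bb E_{s-(t-s)}\delta A_{s,u,t}$ is exactly tailored), splitting each $\delta A_{r^m_k,u^m_k,r^m_{k+1}}$ into its shifted conditional mean plus a centred remainder $R^m_k$, the even/odd parity split so that each parity class is a genuine martingale difference sequence for $(\scr F_{r^m_{k+1}})_k$, conditional BDG followed by conditional Minkowski in $L^{p/2}$, and geometric summation over refinement levels---is precisely the standard argument behind the cited lemma, and I find no gap in it; in particular your checks that all shifted conditioning times $r^m_k-\ell_m$ stay above $S$ (using $s-(t-s)\ge S$) and that $\scr G\subset\scr F_S$ lies below every centring time are the two points where the shifted simplex is genuinely needed. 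One detail worth pinning down explicitly: the Riemann sums in the statement run from $i=1$, so your full dyadic sums $A^m=\sum_{k=0}^{2^m-1}A_{r^m_k,r^m_{k+1}}$ differ from them by the single germ $A_{s,\,s+(t-s)2^{-m}}$, which tends to zero in probability by the first hypothesis together with conditional Chebyshev (note $\Gamma_1$ need not be integrable, so one should truncate on $\{\Gamma_1\le M\}$ before letting $M\to\infty$); only after this identification may the limit be evaluated along your dyadic sums, and the conclusion then follows by conditional Fatou applied to $\|A^0\|_{L^p(\Omega)|\scr G}+\sum_{m\ge0}\|A^{m+1}-A^m\|_{L^p(\Omega)|\scr G}$.
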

		In order to transform \eqref{main SDE} to an SDEs with regular coefficients, for any $\lambda> 0$, consider the following resolvent equation on $\bb R^d$:
	
	\begin{equation}\label{kolmo}
		\partial_tu-\lambda u+\al Lu+b\nabla u+b=0.
	\end{equation}
	
	\begin{theorem}\cite[Corollary 2.10]{chenandzhanglevy}\label{backward}
		Suppose $\alpha\in(1,2)$ and Assumption \ref{H1} holds. If $\left\|b\right\| _{C_b^{\beta,\eta}}<\infty$ holds for $\beta,\eta\in(0,1]$, then the solution $u:[0,1]\times\bb R^d\to\bb R^d$ of equation \eqref{kolmo} such that 
		$$ u(t,x)=\int_{t}^{1} (\al L-\lambda)u(s,x)\rrm ds+\int_{t}^{1} b(s,x)\nabla u(s,x)\rrm ds +\int_{t}^{1} b(s,x)\rrm ds,$$
		with $$\sup_{0\le t\le 1}\|u(t,\cdot)\|_\infty\le\sup_{0\le t\le 1}\|b(t,\cdot)\|_\infty, $$
		and for some $\theta_0>0$ and any $\lambda\ge0$,
        \begin{equation}\label{eq,unorm}
        \sup_{0\le t\le 1}\left\|\nabla u(t,\cdot) \right\|_{C_b^{\varpi}}\le C(1\vee\lambda)^{-\theta_0}\left\|b \right\|_{C_b^{\beta,\eta}},
        \end{equation}
		where $\|f\|_\infty=\sup_{x\in\bb R^d}|f(x)|$ and $\varpi=(\eta/2+\alpha-1)\wedge 1$.
	\end{theorem}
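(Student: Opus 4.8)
The equation \eqref{kolmo} is a backward nonlocal parabolic equation with terminal data $u(1,\cdot)=0$, so the plan is to analyse it through its mild (Duhamel) formulation built on the semigroup $P_t$. Rewriting \eqref{kolmo} as $\partial_t u=-(\al L-\lambda)u-(b\nabla u+b)$ and integrating backward from $t$ to $1$, one checks that $e^{(s-t)(\al L-\lambda)}=\rrm e^{-\lambda(s-t)}P_{s-t}$, and recasts the problem as the fixed point of the map
\[
\al T u(t,x)=\int_t^1 \rrm e^{-\lambda(s-t)}P_{s-t}\big[b(s,\cdot)\nabla u(s,\cdot)+b(s,\cdot)\big](x)\,\rrm ds ,\qquad t\in[0,1].
\]
First I would solve $u=\al T u$ by Banach's fixed-point theorem in $C([0,1];C_b^{1+\varpi}(\bb R^d;\bb R^d))$. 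The only structural inputs are the smoothing bounds on $P_t$: Assumption \ref{H1} for the gradient, and the first estimate of the Proposition stated above, which gives $\|P_r f\|_{C_b^{1+\varpi}}\le C\|f\|_{C_b^{\beta,\eta}}\,r^{((\eta-1-\varpi)\wedge0)/\alpha}$.

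The heart of the argument is the key estimate. Differentiating $\al T u$ and applying these bounds to the bracket $g(s,\cdot)=b(s,\cdot)\nabla u(s,\cdot)+b(s,\cdot)$, whose spatial H\"older exponent is $\eta\wedge\varpi$ as a product of bounded H\"older functions, produces a time singularity of order $(s-t)^{-\sigma}$ with $\sigma=(1+\varpi-(\eta\wedge\varpi))/\alpha$. The choice $\varpi=(\eta/2+\alpha-1)\wedge1$ is exactly what forces $\sigma<1$; here $\alpha\in(1,2)$ is essential, since it makes the bare gradient singularity $r^{-1/\alpha}$ integrable. Consequently, for $\theta_0:=1-\sigma>0$,
\[
\int_t^1 \rrm e^{-\lambda(s-t)}(s-t)^{-\sigma}\,\rrm ds\le \int_0^\infty \rrm e^{-\lambda r}r^{-\sigma}\,\rrm dr=\Gamma(\theta_0)\,(1\vee\lambda)^{-\theta_0},
\]
and this single estimate simultaneously delivers the contraction (for $\lambda$ large the prefactor is $<1$), the decay factor $(1\vee\lambda)^{-\theta_0}$, and the $C_b^\varpi$-control of $\nabla u$ asserted in \eqref{eq,unorm}; the linear source $b$ contributes in the same way.

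The bound $\sup_t\|u(t,\cdot)\|_\infty\le\sup_t\|b(t,\cdot)\|_\infty$ is \emph{not} visible from the mild formula, since the term $b\nabla u$ spoils it, so I would obtain it probabilistically. Mollifying $b$ to smooth bounded $b_m\to b$ with $\sup_m\|b_m\|_{C_b^{\beta,\eta}}<\infty$, the SDE $\rrm dX=b_m\,\rrm dt+\rrm dL$ admits a classical flow, and Feynman--Kac gives $u_m(t,x)=\bb E\int_t^1 \rrm e^{-\lambda(s-t)}b_m(s,X^{t,x}_s)\,\rrm ds$, whence $\|u_m(t,\cdot)\|_\infty\le \sup_s\|b_m(s,\cdot)\|_\infty\int_t^1\rrm e^{-\lambda(s-t)}\,\rrm ds\le\sup_s\|b(s,\cdot)\|_\infty$. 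Since the fixed-point estimate above is uniform in $m$, the gradients $\nabla u_m$ are equibounded in $C_b^\varpi$; an Arzel\`a--Ascoli compactness argument then lets me pass to the limit and identify $u$ as the solution satisfying both bounds.

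The hard part will be the sharp spatial-regularity bookkeeping that yields $\varpi=(\eta/2+\alpha-1)\wedge1$: controlling the product $b\nabla u$ of two merely H\"older factors inside the nonlocal Schauder estimate while keeping the time-singularity integrable, and doing so uniformly in $\lambda$ so that a single $\theta_0>0$ survives in \eqref{eq,unorm}. The standing constraints $\alpha\in(1,2)$ (integrable gradient singularity and subcritical drift) and $2\eta+\alpha>2$ (the regime of pathwise uniqueness underlying the subsequent Zvonkin transform) are precisely what make this closure possible.
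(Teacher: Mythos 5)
This theorem is not proved in the paper at all: it is imported, with attribution, from \cite[Corollary 2.10]{chenandzhanglevy}, so there is no internal proof to compare your argument against. Judged on its own terms, your sketch follows what is essentially the standard route to such nonlocal Schauder estimates (and the route of the cited literature): write the backward equation in Duhamel form $u(t)=\int_t^1 \rrm e^{-\lambda(s-t)}P_{s-t}\big[b(s,\cdot)\nabla u(s,\cdot)+b(s,\cdot)\big]\,\rrm ds$, close a fixed point in $C([0,1];C_b^{1+\varpi})$ using the smoothing of $P_t$, and obtain the $\|u\|_\infty$ bound from a Feynman--Kac representation in which the drift is absorbed into the process (this is indeed the clean way to kill the $b\nabla u$ term). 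Your exponent bookkeeping is coherent: with data regularity $\eta\wedge\varpi$ and target $C_b^{1+\varpi}$, the singularity exponent $\sigma=(1+\varpi-\eta\wedge\varpi)/\alpha$ equals $1-\eta/(2\alpha)$, $(2-\eta)/\alpha$ or $1/\alpha$ according to the relative sizes of $\eta$ and $\varpi=(\eta/2+\alpha-1)\wedge 1$, and in every case $\sigma<1$ under $\alpha\in(1,2)$, so the scheme closes.

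Three points need repair before this is a proof. First, the displayed identity is wrong as stated: $\int_0^\infty \rrm e^{-\lambda r}r^{-\sigma}\,\rrm dr=\Gamma(1-\sigma)\,\lambda^{-(1-\sigma)}$, which diverges as $\lambda\to 0$; to get the uniform factor $(1\vee\lambda)^{-\theta_0}$ you must exploit that the integration range is $[t,1]$, bounding the integral by $(1-\sigma)^{-1}$ when $\lambda\le 1$ and by $\Gamma(1-\sigma)\lambda^{-(1-\sigma)}$ when $\lambda\ge 1$. Second, your contraction constant is of order $\|b\|_{C_b^{\beta,\eta}}(1\vee\lambda)^{-\theta_0}$, so Banach's theorem only applies for $\lambda$ large (depending on $\|b\|_{C_b^{\beta,\eta}}$), whereas the statement asserts existence and the bound \eqref{eq,unorm} for every $\lambda\ge 0$; for small $\lambda$ you need an additional step, e.g.\ running the contraction on successive short time intervals $[1-\delta,1],[1-2\delta,1-\delta],\dots$, which is available because the equation is linear. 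Third, in the mollification argument, Arzel\`a--Ascoli alone only yields a subsequential limit solving the equation; to transfer the bound $\sup_t\|u_m(t,\cdot)\|_\infty\le\sup_t\|b(t,\cdot)\|_\infty$ to the solution $u$ you must also invoke uniqueness of the fixed point (or uniqueness for the limiting equation) to identify that limit with $u$. These are fixable, standard repairs; with them your outline is a legitimate reconstruction of the cited result.
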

	
	Finally, we give the main result in this paper.
	\begin{theorem}\label{theo2.3}
		Let $L$ be a d-dimensional symmetric $\alpha$-stable process, $\alpha\in(1,2)$. Assume Assumptions \ref{H1}-\ref{H3}. Take SDE \eqref{main SDE} with the drift coefficient function $b\in C_b^{\beta,\eta}$ such that $\beta,\eta\in(0,1]$ and
		\begin{equation}\label{co:AS}
			2\eta+\alpha>2\text{ and } (\beta+1)\alpha+\eta>2.
		\end{equation}
		Consider the solution $X_t$ of \eqref{main SDE} and its numerical approximation $X^{(n)}_t$ by the randomised Euler-Maruyama scheme \eqref{con EM SDE}. Then for any $p\ge1$ and $\varepsilon\in(0,1/2)$, there exists a positive constant $C>0$ such that 
		$$\bb E\Big[\sup_{0\le t\le 1}\big|X_t-X^{(n)}_t\big|^p\Big]\le Cn^{-(1/2+\gamma-\varepsilon)p},$$
		where $\gamma:=\beta\wedge(\eta/\alpha)\wedge(1/2)$.
	\end{theorem}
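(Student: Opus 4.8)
The plan is to combine an It\^o--Zvonkin transformation built from the solution $u$ of the resolvent equation \eqref{kolmo} with the conditional shifted stochastic sewing lemma (Theorem \ref{condition sew}). The transformation trades the spatial irregularity of $b$ for the much better regularity of $\nabla u$, while the sewing lemma simultaneously extracts the extra half order produced by the time-randomisation and copes with the heavy tails of $L$, which has finite moments only below $\alpha$. Concretely, I would fix $\lambda$ large, let $u$ solve \eqref{kolmo} as in Theorem \ref{backward}, and set $\Phi(t,x):=x+u(t,x)$. By \eqref{eq,unorm}, enlarging $\lambda$ makes $\sup_{0\le t\le 1}\|\nabla u(t,\cdot)\|_{C_b^\varpi}$ as small as desired, with $\varpi=(\eta/2+\alpha-1)\wedge 1>0$ under the hypothesis $2\eta+\alpha>2$; hence $\Phi(t,\cdot)$ is a near-identity, bi-Lipschitz diffeomorphism and the comparison $c|X_t-X^{(n)}_t|\le|\Phi(t,X_t)-\Phi(t,X^{(n)}_t)|\le C|X_t-X^{(n)}_t|$ holds.

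First I would apply the It\^o formula for the $L$-driven jump process to $u(t,X_t)$ and, separately, to $u(t,X^{(n)}_t)$. Along $X$ the singular terms recombine through the PDE identity $\partial_t u+\al Lu+b\nabla u=\lambda u-b$, so that $\rrm d\big(X_t+u(t,X_t)\big)$ carries no irregular drift; running the same computation for the scheme \eqref{con EM SDE} and subtracting yields $$\Phi(t,X_t)-\Phi(t,X^{(n)}_t)=\al R_t+\al A_t,$$ where $\al R_t$ collects the Lipschitz drift $\int_0^t\lambda\big(u(s,X_s)-u(s,X^{(n)}_s)\big)\,\rrm ds$ and the difference of the two jump martingales, both controlled by $\|X_s-X^{(n)}_s\|$ (via $u\in C^{1+\varpi}$, Assumption \ref{H3}, and the conditional framework for the martingale part), while $$\al A_t=\int_0^t\big(I+\nabla u(s,X^{(n)}_s)\big)\big(b(s,X^{(n)}_s)-b(\kappa^\tau_n(s),X^{(n)}_{\kappa_n(s)})\big)\,\rrm ds$$ is the genuine discretisation term handled by sewing.

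The heart of the argument is the sewing estimate for $\al A$. Splitting the integrand into the time-defect $b(s,X^{(n)}_{\kappa_n(s)})-b(\kappa^\tau_n(s),X^{(n)}_{\kappa_n(s)})$ and the space-defect $b(s,X^{(n)}_s)-b(s,X^{(n)}_{\kappa_n(s)})$, I would take the germ $A_{s,t}:=\bb E_s[\al A_t-\al A_s]$ and verify the two hypotheses of Theorem \ref{condition sew} with $\scr G=\scr F_{s-(t-s)}$ and midpoint $(s+t)/2$. Two independent smoothing mechanisms enter the first bound. Conditioning on $\tau$, the randomised quadrature is \emph{unbiased}, $\int_{\kappa_n(s)}^{\kappa_n(s)+1/n}\bb E^\tau\big[b(\kappa^\tau_n(r),y)\big]\,\rrm dr=\int_{\kappa_n(s)}^{\kappa_n(s)+1/n}b(v,y)\,\rrm dv$, so the time-defect is a sum of centred, conditionally independent one-step contributions; their cancellation supplies the extra exponent $1/2$ together with the factor $\beta$ from the time-H\"older norm. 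The spatial roughness is converted into the factor $\eta/\alpha$ by inserting, through the shifted conditioning, the transition semigroup over a time-gap of order $|t-s|$ and invoking the gradient bound $\|\nabla P_tf\|_{C^0_b}\le Mt^{-1/\alpha}\|f\|_{C^0_b}$ of Assumption \ref{H1}. Together these give $\|A_{s,t}\|_{L^p(\Omega)|\scr G}\le\Gamma_1|t-s|^{1/2+\varepsilon_1}$ with $\Gamma_1$ of order $n^{-(1/2+\gamma-\varepsilon)}$ and $\gamma=\beta\wedge(\eta/\alpha)\wedge(1/2)$, the cap at $1/2$ being imposed by the sewing reconstruction; the coherence bound on $\bb E_{s-(t-s)}\delta A_{s,u,t}$ follows from the same estimates applied to increments, the shifted conditioning guaranteeing that the relevant increments of $L$ and of the scheme lie in the future of $\scr G$, so their conditional moments stay finite through Assumption \ref{H3} even when $p\ge\alpha$.

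Theorem \ref{condition sew} then returns $\|\al A_t-\al A_s\|_{L^p(\Omega)|\scr G}\lesssim n^{-(1/2+\gamma-\varepsilon)}|t-s|^{1/2+\varepsilon_1}$, hence $\|\al A_1\|_{L^p(\Omega)}\lesssim n^{-(1/2+\gamma-\varepsilon)}$; feeding this and the remainder $\al R$ into a Gr\"onwall argument yields $\|\Phi(t,X_t)-\Phi(t,X^{(n)}_t)\|_{L^p(\Omega)}\lesssim n^{-(1/2+\gamma-\varepsilon)}$, and the bi-Lipschitz comparison transfers this to $X_t-X^{(n)}_t$. Because the driving noise cancels in the difference, $t\mapsto X_t-X^{(n)}_t=\int_0^t\big(b(s,X_s)-b(\kappa^\tau_n(s),X^{(n)}_{\kappa_n(s)})\big)\,\rrm ds$ is Lipschitz with derivative bounded by $2\|b\|_{C_b^{\beta,\eta}}$, so the supremum bound $\bb E[\sup_{t}|X_t-X^{(n)}_t|^p]\lesssim n^{-(1/2+\gamma-\varepsilon)p}$ follows from the pointwise $L^p$ estimate by a routine chaining over an $O(n)$-point grid. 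The main obstacle I anticipate is the first sewing bound: one must realise the full $1/2$ from the $\tau$-randomisation and the full $\gamma$ from the $\alpha$-stable smoothing at the same time, while the low integrability of $L$ forces every moment through the conditional shifted norms of Theorem \ref{condition sew}; the two standing constraints $2\eta+\alpha>2$ and $(\beta+1)\alpha+\eta>2$ are precisely what keep $\varpi>0$ and keep the coherence exponent above $1$, so that all the reconstructed exponents remain admissible.
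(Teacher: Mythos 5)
Your proposal follows essentially the same route as the paper: you build the same Zvonkin-type transformation $x\mapsto x+u(t,x)$ from the resolvent equation \eqref{kolmo}, take $\lambda$ large so that \eqref{eq,unorm} makes the map a near-identity, apply It\^o's formula to obtain precisely the paper's decomposition (the discretisation integrals, the Lipschitz term $\lambda\int(u(s,X_s)-u(s,X_s^{(n)}))\,\rrm ds$, and the compensated jump-martingale term), estimate the discretisation integrals by the conditional shifted sewing lemma (Theorem \ref{condition sew}) with the same two mechanisms\textemdash unbiasedness of the $\tau$-randomisation for the time-defect and the semigroup gradient bound of Assumption \ref{H1} for the space-defect\textemdash and close with Gronwall. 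The one place you genuinely deviate is the passage to the supremum in $t$: the paper upgrades the sewing estimates to sup-in-time bounds via the weighted John--Nirenberg inequality inside Proposition \ref{prop3.7} and then runs Gronwall directly on $\bb E\big[\sup_{s\le t}|X_s-X_s^{(n)}|^p\big]$, whereas you first prove the pointwise $L^p$ bound and then exploit that $t\mapsto X_t-X_t^{(n)}$ is $2\|b\|_{C_b^{\beta,\eta}}$-Lipschitz (the additive noise cancels) to chain over an $O(n)$-point grid. Your variant is more elementary and does work here, but the union bound over $O(n)$ grid points is not free: in $L^p$ it costs a factor $n^{1/q}$, which must be absorbed by invoking the pointwise estimate at a higher moment $q\gtrsim 1/\varepsilon$ and with $\varepsilon$ replaced by $\varepsilon/2$\textemdash legitimate in this setting precisely because the difference process admits moments of all orders, but this step should be made explicit, since for a fixed moment $p$ alone the grid argument would lose a polynomial factor in $n$.
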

	\begin{remark}
    \begin{itemize}
        \item The case of $\alpha=2$, though not being covered in Theorem \ref{theo2.3}, has been considered in \cite{bao2025randomisedeulermaruyamamethodsdes}.
        \item It is well-known that for a symmetric $\alpha$-stable process, and therefore $X_t$ (resp. $X_t^{(n)}$), moments exist only up to order $\alpha$, and not necessarily for any $p \ge 1$. However, 
        given that $b\in C_b^{\beta,\eta}$ is bounded and SDE \eqref{main SDE} is driven by additive noise, we have the pointwise bound: $$ b(t,X_t)\le \|b\|_{C_b^{\beta,\eta}} \text{ \big(resp. } b(t,X_t^{(n)})\le \|b\|_{C_b^{\beta,\eta}}\text{\big)}. $$
        As a consequence, the difference $X_t - X_t^{(n)}=\int_0^t \big(b(t,X_t)- b(t,X_t^{(n)})\big)\mathrm{d}s$ admits moments of all orders $p \ge 1$.
        \item 
        In comparison with \cite[Theorem 2.2]{butkovsky2024strongrateconvergenceeuler}, we consider the case where the drift term $b$ is time-dependent. Specifically, we assume that $b(t,x)$ is $\beta$-H\"older continuous with respect to time. Under this setting, the order of convergence of EM method considered in \cite{butkovsky2024strongrateconvergenceeuler} cannot exceed $$\beta\wedge\Big(\frac{1}{2}+\big(\frac{\eta}{\alpha}\wedge\frac{1}{2}\big)-\varepsilon\Big),$$ for any $\varepsilon>0$, which yields a weaker result compared to ours. 
    \end{itemize}
	    
	\end{remark}
    
	\section{Priori Estimations}\label{sec:priori}
	Recall the process $\big(X_t^{(n)}\big)_{t \in [0,1]}$ defined in \eqref{con EM SDE}. We now introduce $\phi_t^{(n)}$ to isolate the drift component of the Euler-Maruyama scheme, defined by

	\begin{equation}\label{eqiso}
		\begin{aligned}
			\phi_t^{(n)}&=X_t^{(n)}-L_t\\&=x_0+\int_{0}^{t}b(\kappa_n^\tau(s),X_{\kappa_n(s)}^{(n)})\rrm ds\\&=
			x_0+\int_{0}^{t}b(\kappa_n^\tau(s),\phi_{\kappa_n(s)}^{(n)}+L_{\kappa_n(s)})\rrm ds
		\end{aligned}
	\end{equation}
	It is straightforward to verify that $\phi_t^{(n)}$ is $\scr F_{\kappa_n(t)}$-adapted.
	\begin{lemma}\label{X_t-x_s}
		Consider the randomised EM simulation \eqref{con EM SDE}. Suppose that the function $b$ is bounded. If $L$ is a $d$-dimensional symmetric $\alpha$-stable process with $\alpha\in(1,2)$, $(X_t^{(n)})_{t \in [0,1]}$ in \eqref{con EM SDE}, then there exists a constant $C>0$ and for any $0\le s\le t\le 1$ such that $$\big\|X^{(n)}_t-X^{(n)}_{s} \big\|_{L^1(\Omega)}\le C  |t-s|^{\frac{1}{\alpha}-\varepsilon}. $$ 
		Specially, for any $p\ge 1$, we have 
		$$\big\|\phi^{(n)}_t-\phi^{(n)}_{s} \big\|_{L^p(\Omega)}\le C  |t-s|. $$ 
	\end{lemma}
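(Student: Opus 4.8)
The plan is to treat the two estimates separately, establishing the pathwise (hence $L^p$) bound on the drift part $\phi^{(n)}$ first, and then combining it with a moment bound on the L\'evy increment to control $X^{(n)}$ in $L^1$.

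For the second (``Specially'') estimate I would argue pathwise. From \eqref{eqiso}, for $0\le s\le t\le 1$ we have
$$\phi_t^{(n)}-\phi_s^{(n)}=\int_{s}^{t}b\big(\kappa_n^\tau(r),X_{\kappa_n(r)}^{(n)}\big)\,\rrm dr,$$
and since $b$ is bounded, $\big|b(\kappa_n^\tau(r),X_{\kappa_n(r)}^{(n)})\big|\le \|b\|_{C_b^{\beta,\eta}}$ for every $r$ and every realisation. Hence $\big|\phi_t^{(n)}-\phi_s^{(n)}\big|\le \|b\|_{C_b^{\beta,\eta}}\,|t-s|$ almost surely, and taking $L^p$-norms yields the claimed bound with $C=\|b\|_{C_b^{\beta,\eta}}$, uniformly in $p\ge1$. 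No stochastic analysis is needed here precisely because the drift is bounded.

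For the first estimate I would use the decomposition $X_t^{(n)}-X_s^{(n)}=\big(\phi_t^{(n)}-\phi_s^{(n)}\big)+\big(L_t-L_s\big)$ coming directly from \eqref{eqiso} and \eqref{con EM SDE}, and then apply the triangle inequality in $L^1(\Omega)$. The drift contribution is controlled by the bound just proved, giving $\big\|\phi_t^{(n)}-\phi_s^{(n)}\big\|_{L^1(\Omega)}\le C|t-s|$. For the L\'evy contribution I would invoke stationarity of increments, $L_t-L_s$ equal in law to $L_{t-s}$, together with the scaling property $L_{t-s}$ equal in law to $(t-s)^{1/\alpha}L_1$; since $\alpha\in(1,2)$ the first moment $\bb E^L|L_1|$ is finite, so $\big\|L_t-L_s\big\|_{L^1(\Omega)}=(t-s)^{1/\alpha}\,\bb E^L|L_1|$. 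Alternatively, Assumption \ref{H3} applied with $p=1\in(0,\alpha)$ gives the truncated bound $\bb E^L\big[|L_{t-s}|\wedge1\big]\le M(t-s)^{1/\alpha-\varepsilon}$, which is where the arbitrarily small loss $\varepsilon$ in the exponent originates.

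Finally I would combine the two contributions. Since $0\le t-s\le1$ and $\alpha>1$ forces $1/\alpha-\varepsilon<1$, one has both $|t-s|\le|t-s|^{1/\alpha-\varepsilon}$ and $(t-s)^{1/\alpha}\le(t-s)^{1/\alpha-\varepsilon}$, so the drift term and the L\'evy term are each dominated by a constant multiple of $|t-s|^{1/\alpha-\varepsilon}$, giving the assertion. The only genuinely delicate point is the L\'evy increment: because a symmetric $\alpha$-stable process has finite moments only up to order $\alpha$, the estimate for $X^{(n)}$ must be stated in $L^1$ rather than in a higher $L^p$, and it is exactly the condition $\alpha>1$ (equivalently the finiteness of $\bb E^L|L_1|$, or Assumption \ref{H3} at $p=1$) that makes the required first moment available. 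Everything else is elementary.
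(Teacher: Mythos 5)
Your proposal is correct and follows essentially the same route as the paper: decompose $X^{(n)}_t-X^{(n)}_s$ into the drift part $\phi^{(n)}_t-\phi^{(n)}_s$, bounded pathwise by $\|b\|_\infty|t-s|$, plus the L\'evy increment, controlled via stationarity and the scaling property $\bb E|L_t-L_s|=(t-s)^{1/\alpha}\bb E|L_1|$ (finite since $\alpha>1$). Your additional remarks on Assumption \ref{H3} as the source of the $\varepsilon$-loss and on why the $X^{(n)}$ bound is confined to $L^1$ are consistent with, and slightly more explicit than, the paper's own argument.
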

	\begin{proof}
		Recall from \eqref{eqiso} that for any $t\in[0,1]$, we have 
		\begin{equation*}
			\begin{aligned}
				\left|X^{(n)}_t-X^{(n)}_{s}\right| &\le	\left|\phi^{(n)}_t-\phi^{(n)}_{s}\right|+\left|L_t-L_s\right| \\&\le \int_{s}^{t}\left|b(\kappa^\tau_n(r),\phi^{(n)}_{\kappa_n(r)}+L_{\kappa_n(r)}) \right|\rrm dr +\left|L_t-L_{s }\right|\\&\le
				\left\|b \right\|_{1,\infty} |t-s| +\left|L_t-L_{s}\right|,
			\end{aligned}
		\end{equation*}
        where $\|b\|_{1,\infty}=\sup_{t\in[0,1],x\in\bb R^d}|b(t,x)|$. By using the scaling property, we have $\bb E|L_t-L_s|=(t-s)^{1/\alpha}\bb E|L_1|.$
		Therefore, we obtain 
		\begin{align*}
			\big\|X^{(n)}_t-X^{(n)}_{s} \big\|_{L^1(\Omega)}\le C|t-s|^{\frac{1}{\alpha}}.
		\end{align*}
	\end{proof}
	
	\begin{lemma}\label{lem3.1}
		Assume $g_1\in C_b^{\beta,\eta}$ and $g_2\in C([0,1];C^0_b(\bb R^d))$, and for any $0\le S\le T\le1$, $\varepsilon>0$ and $p\ge2$, then there exists a constant $C>0$ such that
		\begin{align}
			&\left\|\int_S^T \big(g_1(r,X^{(n)}_{\kappa_n(r)})-g_1(\kappa_n^\tau(r),X^{(n)}_{\kappa_n(r)})\big) \rrm dr\right\|_{L^p(\Omega)}\le C|T-S|^{1/2+\varepsilon}n^{-(1/2+\beta-\varepsilon)}\label{eq9};\\
			&\left\|\int_S^T \big(g_1(r,X^{(n)}_{\kappa_n(r)})-g_1(\kappa_n^\tau(r),X^{(n)}_{\kappa_n(r)})\big)g_2(r,X_{r}^{(n)}) \rrm dr\right\|_{L^p(\Omega)}\le C|T-S|^{1/2+\varepsilon}n^{-(1/2+\beta-\varepsilon)} .\label{eq10}
		\end{align}
	\end{lemma}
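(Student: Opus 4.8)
The plan is to recognise each of the two integrals as the increment over $[S,T]$ of an additive functional and to bound it by the stochastic sewing lemma (Theorem \ref{classical sew}); the gain of half an order will come entirely from the fact that the randomised time evaluation $g_1(\kappa_n^\tau(r),\cdot)$ is a \emph{conditionally unbiased} quadrature for $\int g_1(r,\cdot)\,\mathrm dr$ over each cell $[t_{j-1},t_j]$. Note first that both integrands are bounded: since the two $g_1$-terms share the spatial argument $X^{(n)}_{\kappa_n(r)}$, one has $|g_1(r,X^{(n)}_{\kappa_n(r)})-g_1(\kappa_n^\tau(r),X^{(n)}_{\kappa_n(r)})|\le\|g_1\|_{C_b^{\beta,\eta}}|r-\kappa_n^\tau(r)|^\beta\le\|g_1\|_{C_b^{\beta,\eta}}n^{-\beta}$, and $g_2$ is bounded. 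Hence the $\alpha$-stable moment obstruction does not bite for these particular estimates and the classical (unshifted) SSL already suffices, with the conditional shifted version (Theorem \ref{condition sew}) kept in reserve.

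For \eqref{eq9} I would take the germ $A_{s,t}:=\bb E_s\int_s^t\big(g_1(r,X^{(n)}_{\kappa_n(r)})-g_1(\kappa_n^\tau(r),X^{(n)}_{\kappa_n(r)})\big)\,\mathrm dr$, for which $\delta A_{s,u,t}=(\bb E_s-\bb E_u)\int_u^t(\cdots)\,\mathrm dr$, so that $\bb E_s[\delta A_{s,u,t}]=0$ by the tower property; thus \eqref{SSL2} holds with $C_2=0$ and the whole content sits in \eqref{SSL1}. The key observation is that for every cell $[t_{j-1},t_j]$ strictly to the right of $s$ the increment $\int_{t_{j-1}}^{t_j}(\cdots)\,\mathrm dr$ has zero conditional mean given the pre-cell $\sigma$-algebra $\mathcal H_j:=\scr F^L_{t_{j-1}}\otimes\scr F^\tau_{j-1}$: indeed $X^{(n)}_{t_{j-1}}$ is $\mathcal H_j$-measurable while $\tau_j$ is independent of $\mathcal H_j$, and the uniform law of $\tau_j$ gives $\tfrac1n\bb E^\tau[g_1(t_{j-1}+\tau_j/n,\cdot)]=\int_{t_{j-1}}^{t_j}g_1(r,\cdot)\,\mathrm dr$. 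Since $\scr F_s\subset\mathcal H_j$ for such cells, the tower property annihilates all of them under $\bb E_s$, leaving only the single current cell of length $\le1/n$; hence $\|A_{s,t}\|_{L^p(\Omega)}\lesssim\|g_1\|_{C_b^{\beta,\eta}}n^{-1-\beta}$, which reorganises into \eqref{SSL1} with $C_1\asymp n^{-(1/2+\beta-\varepsilon)}$ and $\varepsilon_1=\varepsilon$. Feeding $C_1$ and $C_2=0$ into \eqref{SSL3 cA}, and identifying the sewn process with the genuine integral through its defining Riemann-sum limit (the remainder $(\mathrm{Id}-\bb E_s)\int_s^t(\cdots)$ satisfies the characterising bounds), yields exactly \eqref{eq9}.

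For \eqref{eq10} I would use the same germ with the extra factor $g_2(r,X^{(n)}_r)$; again $\bb E_s[\delta A_{s,u,t}]=0$, so only \eqref{SSL1} must be re-examined. The difficulty, and the crux of the lemma, is that the exact cancellation breaks: within a cell, $X^{(n)}_r=X^{(n)}_{t_{j-1}}+(r-t_{j-1})\,b(\kappa_n^\tau(r),X^{(n)}_{t_{j-1}})+(L_r-L_{t_{j-1}})$ depends on $\tau_j$ through its drift increment, so $g_2(r,X^{(n)}_r)$ is correlated with the $g_1$-difference, and since $g_2$ is only bounded measurable in space one cannot freeze it spatially. I would therefore compare $X^{(n)}_r$ with the $\tau_j$-independent reference $\widehat X_r:=X^{(n)}_{t_{j-1}}+(L_r-L_{t_{j-1}})$: conditionally on $\mathcal H_j$ the reference part $f_r\,g_2(r,\widehat X_r)$ factorises into independent pieces and, after integrating over the cell, the zero-mean property of $f_r$ is restored, so that only the residual $f_r\big(g_2(r,X^{(n)}_r)-g_2(r,\widehat X_r)\big)$ survives as a \emph{compensator}.

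The compensator is where Assumption \ref{H1} must enter. Taking the conditional expectation first averages $g_2$ against the law of $L_r-L_{t_{j-1}}$, turning the merely bounded $g_2$ into the smooth $P_{r-t_{j-1}}g_2(r,\cdot)$; the residual is then a difference of $P_h g_2$ at two points a distance $O(h\|b\|_\infty)$ apart and is controlled by $\|\nabla P_hg_2(r,\cdot)\|_\infty\,h\|b\|_\infty\lesssim\|g_2\|_\infty\|b\|_\infty\,h^{1-1/\alpha}$ with $h=r-t_{j-1}\le1/n$. I expect the main obstacle to be squeezing this compensator down to the full order: a term-by-term estimate over the $\asymp n|T-S|$ cells only yields a factor $n^{-(1-1/\alpha)}$ per unit time, which is too weak for $\alpha$ near $1$, so one must recover extra cancellation — running a second, nested sewing for the compensator integral $\int_S^T H(r,X^{(n)}_{\kappa_n(r)})\,\mathrm dr$ along the trajectory, using the time-continuity of $g_2$ and the $\alpha$-stable regularisation, and invoking the conditional shifted SSL (Theorem \ref{condition sew}) to keep the low-moment increments of $L$ inside conditional $L^p$ norms. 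Reassembling the martingale part (controlled exactly as in \eqref{eq9}) with this compensator bound then produces \eqref{eq10}.
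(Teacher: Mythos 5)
Your treatment of \eqref{eq9} is correct and is essentially the paper's own argument: same germ $A_{s,t}=\bb E_s\int_s^t(\cdots)\rrm dr$, same per-cell unbiasedness of the uniform randomisation, same identification of the sewn process through the zero-conditional-mean remainder (the only slip is that \emph{two} partial boundary cells survive, the one containing $s$ and the one containing $t$, not ``the single current cell''; the bound is unaffected). The proof of \eqref{eq10}, however, has a genuine gap, and it sits exactly where you locate the crux. Your claim that, conditionally on $\mathcal{H}_j$, the reference part $\int_{t_{j-1}}^{t_j}f_r\,g_2(r,\widehat X_r)\,\rrm dr$ has zero mean is false. Conditional independence of $\tau_j$ and $L_\cdot-L_{t_{j-1}}$ does factorise the cell average, but into
\begin{equation*}
\int_{t_{j-1}}^{t_j}F(r)\,G(r)\,\rrm dr,\qquad F(r):=g_1(r,X^{(n)}_{t_{j-1}})-n\int_{t_{j-1}}^{t_j}g_1(v,X^{(n)}_{t_{j-1}})\,\rrm dv,\qquad G(r):=\big(P_{r-t_{j-1}}g_2(r,\cdot)\big)(X^{(n)}_{t_{j-1}}),
\end{equation*}
and while $\int_{t_{j-1}}^{t_j}F(r)\,\rrm dr=0$, the factor $G$ is \emph{not} constant in $r$, so the product integral does not vanish. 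One can subtract $G(t_{j-1})$ and try to exploit the oscillation $G(r)-G(t_{j-1})$, but the lemma assumes only $g_2\in C([0,1];C_b^0(\bb R^d))$, i.e.\ bounded measurable in space, so $P_hg_2-g_2$ carries no quantitative smallness and this ``reference part'' is of the same order $n^{-\beta}$ per unit time as the term you started from. On top of this, the part of your argument that is sound (smoothing the residual with Assumption \ref{H1} to get $h^{1-1/\alpha}$ per cell) is, as you yourself note, insufficient for $\alpha$ near $1$, and the proposed repair by a ``second, nested sewing'' is not carried out; as written it is a conjecture, not a proof.

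The paper avoids all of this with a different germ: it freezes the second factor \emph{entirely} at the grid point, taking $A_{s,t}:=\int_s^t\big(g_1(r,X^{(n)}_{\kappa_n(r)})-g_1(\kappa_n^\tau(r),X^{(n)}_{\kappa_n(r)})\big)\,g_2(\kappa_n(r),X^{(n)}_{\kappa_n(r)})\,\rrm dr$. Since $g_2(\kappa_n(r),X^{(n)}_{\kappa_n(r)})=g_2(t_{j-1},X^{(n)}_{t_{j-1}})$ is $\mathcal{H}_j$-measurable, it factors \emph{outside} the $\tau_j$-average, and the exact cancellation from \eqref{eq9} survives verbatim\textemdash for any $g_2$ that is merely bounded and measurable in space, with no semigroup smoothing, no Assumption \ref{H1}, and no dependence on $\alpha$. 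The mismatch between $g_2(r,X^{(n)}_r)$ and its frozen version is then relegated to the sewing remainder $\al A_t-\al A_s-A_{s,t}$, where only crude bounds are needed. Your decomposition, by contrast, intrinsically requires spatial regularity of $g_2$ (which is what the semigroup is being asked to manufacture), so even if completed it would prove a strictly weaker statement than \eqref{eq10}, closer to the setting of Propositions \ref{prop3.6} and \ref{prop3.7} where $g_2$ is spatially H\"older.
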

	\begin{proof}
		Firstly, we prove equation \eqref{eq9}. Define for $0\le S\le T\le 1$ $$A_{s,t}:=\bb E_s\Big[\int_{s}^{t}(g_1(r,X^{(n)}_{\kappa_n(r)})-g_1(\kappa_n^\tau(r),X^{(n)}_{\kappa_n(r)}))\rrm dr\Big],\quad (s,t)\in\Delta_{S,T}.$$
		For any $ S\le s\le u\le t\le T$, we have
		\begin{equation*}
			\begin{aligned}
				\delta A_{s,u,t}&=A_{s,t}-A_{s,u}-A_{u,t}\\
				&=\bb E_s\big[\int_{u}^{t}(g_1(r,X^{(n)}_{\kappa_n(r)})-g_1(\kappa_n^\tau(r),X^{(n)}_{\kappa_n(r)}))\rrm dr\big]\\&\quad -\bb E_u\big[\int_{u}^{t}(g_1(r,X^{(n)}_{\kappa_n(r)})-g_1(\kappa_n^\tau(r),X^{(n)}_{\kappa_n(r)}))\rrm dr\big].
			\end{aligned}
		\end{equation*}
		It is easy that 
		$$
		\bb E_s[\delta A_{s,u,t}]=0.
		$$
		When $t-s<1/n$, we know 
		\begin{equation*}
			\begin{aligned}
				\big|g_1(r,X^{(n)}_{\kappa_n(r)})-g_1(\kappa_n^\tau(r),X^{(n)}_{\kappa_n(r)}) \big|  \le \left\| g_1\right\|_{C_b^{\beta,\eta}} \left|r-\kappa_n^\tau(r) \right|^\beta .
			\end{aligned}
		\end{equation*}
		Then it is easy to get
		$$\left\|\int_s^t \big(g_1(r,X^{(n)}_{\kappa_n(r)})-g_1(\kappa_n^\tau(r),X^{(n)}_{\kappa_n(r)})\big) \rrm dr\right\|_{L^p(\Omega)}\le \left\| g_1\right\| _{C_b^{\beta,\eta}}|t-s|^{1/2+\varepsilon}n^{-(1/2+\beta-\varepsilon)} .$$
		For another, if $t-s\ge 1/n$, then there exist indices $i,j\in\{1,2,\cdots,n\}$ such that $i/n\le s<(i+1)/n$ and $j/n\le t<(j+1)/n$. Thus, we have
		\begin{equation*}
			\begin{aligned}
				|A_{s,t}|&=\Big|\int_{s}^{t}\bb E_s\big[g_1(r,X^{(n)}_{\kappa_n(r)})-g_1(\kappa_n^\tau(r),X^{(n)}_{\kappa_n(r)})\big] \rrm dr\Big|\\&\le \int_{s}^{\frac{i+1}{n}}\Big|\bb E_s\big[g_1(r,X^{(n)}_{\kappa_n(r)})-g_1(\kappa_n^\tau(r),X^{(n)}_{\kappa_n(r)})\big]\Big| \rrm dr\\&\quad+\sum_{k=i+1}^{j-1}\Big|\int_{\frac{k}{n}}^{\frac{k+1}{n}}\bb E_s\big[g_1(r,X^{(n)}_{\kappa_n(r)})-g_1(\kappa_n^\tau(r),X^{(n)}_{\kappa_n(r)})\big] \rrm dr\Big|\\&\quad+\int_{\frac{j}{n}}^{t}\Big|\bb E_s\big[g_1(r,X^{(n)}_{\kappa_n(r)})-g_1(\kappa_n^\tau(r),X^{(n)}_{\kappa_n(r)})\big]\Big| \rrm dr\\&=:I_s+\sum_{k=i+1}^{j-1}I_k+I_t.
			\end{aligned}
		\end{equation*}
		Next, we just estimate $I_k$ by conditional expectation.
		\begin{equation*}
			\begin{aligned}
				&\int_{\frac{k}{n}}^{\frac{k+1}{n}}\bb E_s\Big[\bb E_{k/n}\big[g_1(r,X^{(n)}_{\kappa_n(r)})-g_1(\kappa_n^\tau(r),X^{(n)}_{\kappa_n(r)})\big]\Big] \rrm dr\\&=\int_{\frac{k}{n}}^{\frac{k+1}{n}}\bb E_sg_1(r,X^{(n)}_{k/n})\rrm dr-\frac{1}{n}\bb E_s\Big[\bb E^L_{k/n}\big[\bb E^\tau_{k/n} g_1((k+\tau)/n,X^{(n)}_{k/n})\big]\Big] \\&=\int_{\frac{k}{n}}^{\frac{k+1}{n}}\bb E_sg_1(r,X^{(n)}_{k/n})\rrm dr-\frac{1}{n}\int_{0}^{1}\bb E_s g_1((k+h)/n,X^{(n)}_{k/n})\rrm dh\\&=0	
			\end{aligned}
		\end{equation*}
		Therefore, under this condition, we obtain
		\begin{equation*}
			\begin{aligned}
				\left\|\int_s^t (g_1(r,X^{(n)}_{\kappa_n(r)})-g_1(\kappa_n^\tau(r),X^{(n)}_{\kappa_n(r)})) \rrm dr\right\|_{L^p(\Omega)}&\le \left\|I_s\right\|_{L^p(\Omega)}+\left\|I_t\right\|_{L^p(\Omega)}\\&\le  2\left\|g_1 \right\|_{C_b^{\beta,\eta}} |t-s|^{1/2+\varepsilon}n^{-(1/2+\beta-\varepsilon)}
			\end{aligned}
		\end{equation*}
		Then, we prove equation \eqref{eq10}. Also using the same methods,
			\begin{equation*}
			\begin{aligned}
				\Big\|\int_{s}^{t}\big(g_1(r,X^{(n)}_{\kappa_n(r)})-g_1(\kappa_n^\tau(r),X^{(n)}_{\kappa_n(r)})\big)g_2(\kappa_n(r),X^{(n)}_{\kappa_n(r)}) \rrm dr\Big\|_{L^p(\Omega)}\le 2\left\|g_1 \right\|_{C_b^{\beta,\eta}}\left\|g_2 \right\|_{[s,t]} |t-s|^{1/2+\varepsilon}n^{-(1/2+\beta-\varepsilon)}.
			\end{aligned}
		\end{equation*}
		In order to prove equation \eqref{eq10}, define the process as follows
		$$\al A_t:=\int_{0}^{t}\big(g_1(r,X^{(n)}_{\kappa_n(r)})-g_1(\kappa_n^\tau(r),X^{(n)}_{\kappa_n(r)})\big)g_2(r,X_{r}^{(n)}) \rrm dr,$$
		and define a process in $\Delta_{S,T}$,
		$$A_{s,t}:=\int_{s}^{t}\big(g_1(r,X^{(n)}_{\kappa_n(r)})-g_1(\kappa_n^\tau(r),X^{(n)}_{\kappa_n(r)})\big)g_2(\kappa_n(r),X^{(n)}_{\kappa_n(r)}) \rrm dr, \quad (s,t)\in \Delta_{S,T}.$$
		For any $S\le s\le u\le t\le T$, we have $\bb E_s[\delta A_{s,u,t}]=0$. Then, it can be easily verified as follows
		\begin{equation*}
			\begin{aligned}
				&\left\|\al A_t-\al A_s-A_{s,t} \right\|_{L^p(\Omega)} \\&\le \left\|\int_{s}^{t}\big(g_1(r,X^{(n)}_{\kappa_n(r)})-g_1(\kappa_n^\tau(r),X^{(n)}_{\kappa_n(r)})\big)\big(g_2(r,X_{r}^{(n)})-g_2(\kappa_n(r),X^{(n)}_{\kappa_n(r)})\big) \rrm dr\right\|_{L^p(\Omega)}\\&\le C\left\|g_1 \right\|_{C_b^{\beta,\eta}}\left\|g_2 \right\|_{[s,t]}|t-s|^{1+\beta}.
			\end{aligned}
		\end{equation*}
		By Sewing Lemma (Theorem \ref{classical sew}), we prove this conclusion.
	\end{proof}
	Next, we derive estimates for the family $(\phi^{(n)}_t)_{t\ge0}$ as defined in \eqref{eqiso}.

	\begin{lemma}\label{lam3.2}
		Suppose the function $b\in C^{\beta,\eta}_b$ and $L$ is a $d$-dimensional symmetric $\alpha$-stable process with $\alpha\in(1,2)$. For any $0\le s\le t\le1$, $p\ge1$ and $\varepsilon>0$, there exists a constant $C>0$ such that
		$$\big\|\phi^{(n)}_t-\bb E_s\phi^{(n)}_t \big\|_{L^p(\Omega)|\scr F_s}\le C\left| t-s\right|^{1+\beta\wedge\frac{\eta}{\alpha}\wedge\frac{1}{p}-\varepsilon}. $$
	\end{lemma}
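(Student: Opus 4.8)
The plan is to center the drift increment, split it into a \emph{time-randomisation} fluctuation and a \emph{spatial} fluctuation, estimate the former by the conditional shifted stochastic sewing lemma (Theorem \ref{condition sew}) and the latter by a direct computation built on the semigroup smoothing together with the truncated moment bound of Assumption \ref{H3}. Since $\phi^{(n)}_s$ is $\scr F_s$-measurable, I would first write
$$\phi^{(n)}_t-\bb E_s\phi^{(n)}_t=\int_s^t\big(b(\kappa^\tau_n(r),X^{(n)}_{\kappa_n(r)})-\bb E_s b(\kappa^\tau_n(r),X^{(n)}_{\kappa_n(r)})\big)\rrm dr.$$
Writing $\bar b_r(\cdot):=\bb E^\tau[b(\kappa^\tau_n(r),\cdot)]$ for the conditional $\tau$-average over the cell of $r$, the integrand splits as
$$\underbrace{\big(b(\kappa^\tau_n(r),X^{(n)}_{\kappa_n(r)})-\bar b_r(X^{(n)}_{\kappa_n(r)})\big)}_{\text{time part}}+\underbrace{\big(\bar b_r(X^{(n)}_{\kappa_n(r)})-\bb E_s\bar b_r(X^{(n)}_{\kappa_n(r)})\big)}_{\text{space part}},$$
the cross term $\bb E_s[\bar b_r(X^{(n)}_{\kappa_n(r)})-b(\kappa^\tau_n(r),X^{(n)}_{\kappa_n(r)})]$ vanishing for $r$ beyond the grid cell of $s$ because $\tau_{\lfloor nr\rfloor+1}$ is then independent of $\scr F_s$ while $X^{(n)}_{\kappa_n(r)}$ does not depend on it; contributions from $r$ in the same cell as $s$ cancel outright, so both parts are effectively integrated only over complete cells to the right of $s$.

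For the space part I would freeze $\phi^{(n)}_{\kappa_n(r)}$ at its $\scr F_s$-measurable value $\phi^{(n)}_s$ using the Lipschitz-in-time estimate $\|\phi^{(n)}_{\kappa_n(r)}-\phi^{(n)}_s\|_{L^p(\Omega)}\le C|\kappa_n(r)-s|$ from Lemma \ref{X_t-x_s}; by the $\eta$-H\"older continuity of $b$ in space this costs an error $\lesssim|\kappa_n(r)-s|^\eta$, which integrates to $|t-s|^{1+\eta}$ and is of higher order since $\eta\ge\eta/\alpha\ge\gamma$. After freezing, the Markov property gives $\bb E_s\bar b_r(\phi^{(n)}_s+L_{\kappa_n(r)})=P_{\kappa_n(r)-s}[\bar b_r(\phi^{(n)}_s+\cdot)](L_s)$, and comparing $\bar b_r(\phi^{(n)}_s+L_{\kappa_n(r)})$ with the $\scr F_s$-measurable reference $\bar b_r(\phi^{(n)}_s+L_s)$ reduces the estimate to $\big\||L_{\kappa_n(r)}-L_s|^\eta\wedge1\big\|_{L^p(\Omega)}$. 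The decisive moment bound comes from $(|L_\delta|^\eta\wedge1)^p=|L_\delta|^{\eta p}\wedge1\le|L_\delta|^q\wedge1$ for any $q\le\eta p$ with $q<\alpha$, so Assumption \ref{H3} yields $\big\||L_\delta|^\eta\wedge1\big\|_{L^p(\Omega)}\lesssim\delta^{(\eta/\alpha\wedge1/p)-\varepsilon}$; the truncation down to $q<\alpha$ is precisely what generates the $1/p$-threshold. Using $\kappa_n(r)-s\le r-s$ and integrating gives the space contribution $\lesssim|t-s|^{1+(\eta/\alpha\wedge1/p)-\varepsilon}$.

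For the time part I would invoke Theorem \ref{condition sew} with $\scr G=\scr F_s$ and the germ $A_{v,w}:=\bb E_v\int_v^w\big(b(\kappa^\tau_n(r),X^{(n)}_{\kappa_n(r)})-\bar b_r(X^{(n)}_{\kappa_n(r)})\big)\rrm dr$, in direct analogy with the proof of Lemma \ref{lem3.1}. The structural facts to exploit are that $b(\kappa^\tau_n(r),\cdot)-\bar b_r(\cdot)$ has zero $\tau$-mean and becomes measurable only at the right endpoint of its cell: thus $A_{v,w}$ receives a contribution only from the cell containing $v$, the centred increment at full cells annihilates $\bb E_{v-(w-v)}\delta A_{v,u,w}$ with $u=(v+w)/2$, and the pointwise bound $|b(\kappa^\tau_n(r),\cdot)-\bar b_r(\cdot)|\le\|b\|_{C_b^{\beta,\eta}}n^{-\beta}$ supplies the magnitude. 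These two facts verify the germ hypotheses with the square-root gain characteristic of sewing and produce the time contribution $\lesssim|t-s|^{1+\beta-\varepsilon}$. Taking the minimum of the two exponents then gives $1+\big(\beta\wedge(\eta/\alpha)\wedge(1/p)\big)-\varepsilon$.

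The main obstacle is the time part. A crude triangle-inequality estimate only delivers $n^{-\beta}|t-s|$, which loses the required exponent at sub-grid scales $|t-s|\lesssim1/n$; recovering the power $\beta$ genuinely relies on the martingale-difference cancellation across complete cells, so the delicate step is to verify the germ estimates (in particular the second, $\delta A$-bound) conditionally on $\scr F_s$ and to propagate them through the shifted sewing reconstruction. A secondary technical point is the heavy-tail truncation in the space part, which is what permits passing from moments of order $<\alpha$ to the $L^p$-norm with $p\ge\alpha$ and is responsible for the appearance of the $1/p$ term in the rate.
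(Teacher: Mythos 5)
Your treatment of the space part is essentially the paper's own proof: the paper likewise reduces to the centred drift increment, compares the integrand with an $\scr F_s$-measurable reference (it uses $b(\kappa^\tau_n(s),\phi^{(n)}_{\kappa_n(s)}+L_s)$, exploiting that $\tau_{\lfloor ns\rfloor+1}$ is $\scr F_s$-measurable), controls the $\phi$-fluctuation via Lemma \ref{X_t-x_s}, and gets the exponent $(\eta/\alpha)\wedge(1/p)-\varepsilon$ from the truncated moment bound; your identification of the truncation $q<\alpha$ in Assumption \ref{H3} as the source of the $1/p$ threshold is exactly the mechanism the paper uses implicitly. The divergence is in the time part, and there your argument has a genuine gap. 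Note that the paper does not sew at all: it bounds $|b(\kappa^\tau_n(r),x)-b(\kappa^\tau_n(s),x)|\le\|b\|_{C_b^{\beta,\eta}}|\kappa^\tau_n(r)-\kappa^\tau_n(s)|^\beta$ and estimates this time increment by $|r-s|^\beta$, so that no factor $n^{-\beta}$ ever enters its computation.

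Your sewing step cannot produce the exponent you claim. With your germ $A_{v,w}=\bb E_v\int_v^w\big(b(\kappa^\tau_n(r),X^{(n)}_{\kappa_n(r)})-\bar b_r(X^{(n)}_{\kappa_n(r)})\big)\rrm dr$, only the cell containing $v$ contributes, so $|A_{v,w}|\le\|b\|_{C_b^{\beta,\eta}}\,n^{-\beta}\big((w-v)\wedge n^{-1}\big)$, and indeed $\bb E_{v-(w-v)}\delta A_{v,u,w}=0$; but then the best admissible constant in the first hypothesis of Theorem \ref{condition sew} is $\Gamma_1\asymp n^{-\beta}n^{-(1/2-\varepsilon_1)}$ (from $(w-v)\wedge n^{-1}\le(w-v)^{1/2+\varepsilon_1}n^{-(1/2-\varepsilon_1)}$), and the conclusion of that theorem is the \emph{mixed} bound $Cn^{-1/2-\beta+\varepsilon}|t-s|^{1/2+\varepsilon}$ --- the same shape as Lemma \ref{lem3.3} and Proposition \ref{prop3.4} --- not $C|t-s|^{1+\beta-\varepsilon}$. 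No choice of germ can do better, because the bound you are aiming at is false below the grid scale: if $[s,t]$ straddles a single grid point $k/n$ with $|t-s|\ll 1/n$, your time part reduces to the single uncancelled fluctuation $(t-k/n)\big(b(\kappa^\tau_n(t),X^{(n)}_{k/n})-\bar b_t(X^{(n)}_{k/n})\big)$, whose conditional $L^p$-size is of order $|t-s|\,n^{-\beta}$ for a drift genuinely oscillating at scale $1/n$ (e.g.\ Weierstrass-type), and $|t-s|\,n^{-\beta}\gg|t-s|^{1+\beta-\varepsilon}$ in that regime; martingale cancellation across complete cells improves powers of $n$, never powers of $|t-s|$, and below the grid scale there is at most one cell and nothing to cancel. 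Hence your concluding step of ``taking the minimum of the two exponents'' is unjustified: what your construction actually proves is $C\big(|t-s|^{1+(\eta/\alpha)\wedge(1/p)-\varepsilon}+n^{-1/2-\beta+\varepsilon}|t-s|^{1/2+\varepsilon}\big)$, which recovers the stated estimate only when $|t-s|\ge 1/n$. (Your analysis does expose a real delicacy: the paper's own inequality $|\kappa^\tau_n(r)-\kappa^\tau_n(s)|^\beta\le|r-s|^\beta$ likewise fails when $r$ and $s$ straddle a grid point, where the left-hand side can be of order $n^{-\beta}$; so the sub-grid regime is problematic for the statement itself, not only for your route to it.)
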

	\begin{proof}
		By conditional Minkovski's inequality, Lemma \ref{X_t-x_s} and using \eqref{gprop}, we have that
		\begin{equation*}
			\begin{aligned}
				&\big\|\phi^{(n)}_t-\bb E_s\phi^{(n)}_t \big\|_{L^p(\Omega)|\scr F_s}\\ &=\Big\|\phi^{(n)}_t-\phi^{(n)}_s-\bb E_s\Big[\int_{s}^{t}b(\kappa^\tau_n(r),\phi^{(n)}_{\kappa_n(r)}+L_{r})\rrm dr\Big] \Big\|_{L^p(\Omega)|\scr F_s}\\&\le \Big\|\phi^{(n)}_t-\phi^{(n)}_s-\int_{s}^{t}b(\kappa^\tau_n(s),\phi^{(n)}_{\kappa_n(s)}+L_{s})\rrm dr\Big\|_{L^p(\Omega)|\scr F_s}\\&\quad+\Big\|\bb E_s\Big[\int_{s}^{t}b(\kappa^\tau_n(r),\phi^{(n)}_{\kappa_n(r)}+L_{r})\rrm dr\Big]-\int_{s}^{t}b(\kappa^\tau_n(s),\phi^{(n)}_{\kappa_n(s)}+L_{s})\rrm dr \Big\|_{L^p(\Omega)|\scr F_s}
				\\&\le  2\int_{s}^{t}\left\|\big(b(\kappa_n^\tau(r),\phi^{(n)}_{\kappa_n(r)}+L_{\kappa_n(r)})-b(\kappa^\tau_n(s),\phi^{(n)}_{\kappa_n(s)}+L_{s})\big)\right\|_{L^p(\Omega)}\rrm dr \\&\le
				C\int_{s}^{t}\big(|r-s|^{\beta}+\left\|\big(|L_{\kappa_n}(r)-L_s|^\eta +|\phi^{(n)}_{\kappa_n(r)}-\phi^{(n)}_{\kappa_n(s)}|^\eta\big)\wedge1\right\|_{L^p(\Omega)} \big)\rrm dr\\&\le
				C\left|t-s \right|^{1+\beta\wedge\frac{\eta}{\alpha}\wedge\frac{1}{p}-\varepsilon} 
			\end{aligned}
		\end{equation*}
		The second inequality holds because the integral $\int_{s}^{t}b(\kappa_n^\tau(s),\phi^{(n)}_{\kappa_n(s)}+L_s)\rrm dr$ is $\scr F_s$-measurable.
	\end{proof}
	\begin{lemma}\label{lem3.3}
		Suppose $b\in C^{\beta,\eta}_b$ and $L$ is a $d$-dimensional symmetric $\alpha$-stable process with $\alpha\in(1,2)$. For any $0\le s\le t\le1$ and $\varepsilon>0$ there exists a constant $C>0$ such that 
		$$ \big\|\phi^{(n)}_t-\phi^{(n)}_{\kappa_n(t)}-\bb E_s\big[ \phi^{(n)}_t-\phi^{(n)}_{\kappa_n(t)} \big] \big\|_{L^1(\Omega)|\scr F_s}\le Cn^{-\frac{1}{2}-(\beta\wedge\frac{\eta}{\alpha})+\varepsilon}\left| t-s\right| ^{\frac{1}{2}+\varepsilon}. $$
	\end{lemma}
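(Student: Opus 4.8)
The plan is to isolate the single fractional Euler step hidden in $\phi^{(n)}_t-\phi^{(n)}_{\kappa_n(t)}$, reduce the claim to a pointwise fluctuation estimate for the drift, and then play a crude spatial/temporal bound off against the small factor $t-\kappa_n(t)$. Since $\kappa_n(r)=\kappa_n(t)$ and $\kappa_n^\tau(r)=\kappa_n^\tau(t)$ for every $r\in[\kappa_n(t),t]$, I would first record
\begin{equation*}
\phi^{(n)}_t-\phi^{(n)}_{\kappa_n(t)}=\int_{\kappa_n(t)}^{t}b\big(\kappa_n^\tau(r),\phi^{(n)}_{\kappa_n(r)}+L_{\kappa_n(r)}\big)\,\rrm dr=(t-\kappa_n(t))\,\psi,\qquad \psi:=b\big(\kappa_n^\tau(t),\phi^{(n)}_{\kappa_n(t)}+L_{\kappa_n(t)}\big).
\end{equation*}
As $t-\kappa_n(t)$ is deterministic, the quantity to be bounded equals $(t-\kappa_n(t))\big(\psi-\bb E_s\psi\big)$, so it suffices to control $(t-\kappa_n(t))\big\|\psi-\bb E_s\psi\big\|_{L^1(\Omega)|\scr F_s}$.

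Next I would split according to the grid. If $\lfloor ns\rfloor=\lfloor nt\rfloor$, then $\tau_{\lfloor nt\rfloor+1}$, $\phi^{(n)}_{\kappa_n(t)}$ and $L_{\kappa_n(t)}$ are all $\scr F_s$-measurable, hence $\psi$ is $\scr F_s$-measurable and the left-hand side vanishes. If $\lfloor nt\rfloor>\lfloor ns\rfloor$, then $\kappa_n(t)\ge(\lfloor ns\rfloor+1)/n>s$, which together with $\kappa_n(t)\le t$ yields the sharp two-sided bound $t-\kappa_n(t)\le\min\{1/n,\,t-s\}$. Retaining this minimum rather than only $1/n$ is precisely what will later produce the factor $(t-s)^{1/2+\varepsilon}$.

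I would then estimate the fluctuation by comparison with an $\scr F_s$-measurable anchor, using $\|\psi-\bb E_s\psi\|_{L^1(\Omega)|\scr F_s}\le2\|\psi-c\|_{L^1(\Omega)|\scr F_s}$ with the $\scr F_s$-measurable choice $c:=b(\kappa_n(t),\phi^{(n)}_{\kappa_n(s)}+L_s)$. Splitting the difference into temporal and spatial parts, the temporal part is at most $\|b\|_{C_b^{\beta,\eta}}n^{-\beta}$ (as $|\kappa_n^\tau(t)-\kappa_n(t)|\le1/n$), while \eqref{gprop} together with $|a+b|^\eta\le|a|^\eta+|b|^\eta$ bounds the spatial part by $2\|b\|_{C_b^{\beta,\eta}}\big(|\phi^{(n)}_{\kappa_n(t)}-\phi^{(n)}_{\kappa_n(s)}|^\eta+|L_{\kappa_n(t)}-L_s|^\eta\big)$. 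The $\phi$-increment is controlled pathwise by $\|b\|_{C_b^{\beta,\eta}}(t-s+1/n)$, since $\phi^{(n)}$ is Lipschitz in time with $b$ bounded, and the $L$-increment by the scaling property, $\bb E_s|L_{\kappa_n(t)}-L_s|^\eta=(\kappa_n(t)-s)^{\eta/\alpha}\,\bb E^L|L_1|^\eta\le C(t-s)^{\eta/\alpha}$, the last expectation being finite precisely because $\eta\le1<\alpha$. This gives
\begin{equation*}
\big\|\psi-\bb E_s\psi\big\|_{L^1(\Omega)|\scr F_s}\le C\big(n^{-\beta}+(t-s+1/n)^\eta+(t-s)^{\eta/\alpha}\big).
\end{equation*}

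Finally I would multiply by $t-\kappa_n(t)\le\min\{1/n,t-s\}$ and interpolate via $\min\{1/n,t-s\}\le(1/n)^{1-a}(t-s)^{a}$ for a term-dependent $a\in[0,1]$, choosing $a$ so as to extract exactly $(t-s)^{1/2+\varepsilon}$; the residual power of $n$ is then handled using $\beta,\eta,\eta/\alpha\ge\beta\wedge(\eta/\alpha)$ and, for any H\"older exponent exceeding $1/2$, the trivial bound $(t-s)^{\cdot}\le(t-s)^{1/2+\varepsilon}$, which caps the gain at $1/2$ and produces the exponent $\gamma=\beta\wedge(\eta/\alpha)\wedge(1/2)$ of Theorem~\ref{theo2.3}. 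Collecting the three terms yields $Cn^{-1/2-\gamma+\varepsilon}(t-s)^{1/2+\varepsilon}$. I expect the delicate point to be the regime $t-s\asymp1/n$, where the inequality is saturated and every $\varepsilon$ is visible: there the $\varepsilon$-loss in Assumption~\ref{H3} would be fatal, so the argument must use the \emph{exact} $\eta$-th moment scaling of the $\alpha$-stable increment (legitimate since $\eta<\alpha$) rather than \ref{H3}, and must retain the sharp step length $t-\kappa_n(t)\le\min\{1/n,t-s\}$ in place of the cruder $1/n$.
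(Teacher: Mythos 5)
Your proof follows the paper's own argument essentially step for step: the same one-step representation $\phi^{(n)}_t-\phi^{(n)}_{\kappa_n(t)}=(t-\kappa_n(t))\,b\big(\kappa_n^\tau(t),\phi^{(n)}_{\kappa_n(t)}+L_{\kappa_n(t)}\big)$, the same dichotomy ($\kappa_n(t)\le s$, where the quantity vanishes by measurability, versus $\kappa_n(t)>s$, where $t-\kappa_n(t)\le n^{-1}\wedge(t-s)$), the same comparison with an $\scr F_s$-measurable anchor of the form $b(\kappa_n(t),\cdot)$, and the same three error sources $n^{-\beta}$, the $\phi$-increment and the L\'evy increment, followed by interpolation of $n^{-1}\wedge(t-s)$. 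The only cosmetic deviations are that you replace the paper's tower-property step over $\tau$ by the generic centering inequality $\|\psi-\bb E_s\psi\|_{L^1(\Omega)|\scr F_s}\le 2\|\psi-c\|_{L^1(\Omega)|\scr F_s}$, and that you insist on the exact scaling identity for $\bb E|L_{\kappa_n(t)}-L_s|^\eta$ instead of Assumption \ref{H3}; the latter refinement is unnecessary (the ``fatal $\varepsilon$-loss'' you worry about is harmless, because the target bound already carries an arbitrary $\varepsilon>0$, and the paper does use Assumption \ref{H3} here).

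The one substantive discrepancy is the final exponent: you obtain $n^{-1/2-\gamma+\varepsilon}$ with $\gamma=\beta\wedge(\eta/\alpha)\wedge(1/2)$, whereas the lemma claims $n^{-1/2-(\beta\wedge(\eta/\alpha))+\varepsilon}$ with no cap at $1/2$, so in the regime $\beta\wedge(\eta/\alpha)>1/2$ you have proved a strictly weaker statement than the one asserted. Be aware, however, that the cap is forced by the argument: when $t-s\asymp 1$, the L\'evy contribution is $(t-\kappa_n(t))\,\bb E_s\big[|L_{\kappa_n(t)}-L_s|^\eta\wedge1\big]\asymp n^{-1}$ (and for non-constant $b$ the left-hand side of the lemma itself is typically of this order), which cannot be dominated by $n^{-1/2-\gamma'+\varepsilon}$ once $\gamma'>1/2+\varepsilon$. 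The paper's proof simply asserts its final inequality, which is valid only with the cap, and Lemma \ref{lem3.3} is invoked in the proof of Proposition \ref{prop3.4} only through the capped exponent $\beta\wedge(\eta/\alpha)\wedge(1/2)$. So your capped bound is the honest form of the estimate and is all that the rest of the paper needs; the uncapped exponent in the statement of Lemma \ref{lem3.3} is an overstatement rather than something your argument is missing.
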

	\begin{proof}
		Because 
		\begin{equation*}
			\begin{aligned}
				\phi^{(n)}_t-\phi^{(n)}_{\kappa_n(t)}=\int_{\kappa_n(t)}^{t}b(\kappa_n^\tau(t),L_{\kappa_n(t)}+\phi^{(n)}_{\kappa_n(t)})\rrm ds.
			\end{aligned}
		\end{equation*}
		If $\kappa_n(t)\le s$, because $\phi^{(n)}_{\kappa_n(t)}$ and $\phi^{(n)}_{t}$ are both $\scr F_{\kappa_n(t)}$-measurable so that $\scr F_s$-measurable, then we have
		 $\phi^{(n)}_t-\phi^{(n)}_{\kappa_n(t)}-\bb E_s\big[ \phi^{(n)}_t-\phi^{(n)}_{\kappa_n(t)} \big]=0$. On the other hand, if $\kappa_n(t)> s$, based on \eqref{gprop} we can conclude that 
		\begin{equation*}
			\begin{aligned}
				&\bb E_s\Big|\phi^{(n)}_t-\phi^{(n)}_{\kappa_n(t)}-\bb E_s\big( \phi^{(n)}_t-\phi^{(n)}_{\kappa_n(t)} \big)\Big|\\&=\bb E_s\Big|\int_{\kappa_n(t)}^{t}b(\kappa_n^\tau(r),L_{\kappa_n(r)}+\phi^{(n)}_{\kappa_n(r)})\rrm dr-\bb E_s\Big[\int_{\kappa_n(t)}^{t}b(\kappa_n^\tau(r),L_{\kappa_n(r)}+\phi^{(n)}_{\kappa_n(r)})\rrm dr\Big]\Big|\\&\le \left|t-\kappa_n(t) \right| \bb E_s\big|b(\kappa_n^\tau(t),L_{\kappa_n(t)}+\phi^{(n)}_{\kappa_n(t)})-\bb E_sb(\kappa_n^\tau(t),L_{\kappa_n(t)}+\phi^{(n)}_{\kappa_n(t)}) \big|\\&
				\le \left|t-\kappa_n(t) \right|\bb E_s\big|b(\kappa_n^\tau(t),L_{\kappa_n(t)}+\phi^{(n)}_{\kappa_n(t)})-\bb E_s\big[\bb E^\tau_{\kappa_n(t)}b(\kappa_n^\tau(t),L_{\kappa_n(t)}+\phi^{(n)}_{\kappa_n(t)})\big] \big|\\&\le\left|t-\kappa_n(t) \right|\int_{0}^{1}\bb E_s\big|b(\kappa_n^\tau(t),L_{\kappa_n(t)}+\phi^{(n)}_{\kappa_n(t)})-\bb E_s\big(b(\kappa_n(t)+h/n,L_{\kappa_n(t)}+\phi^{(n)}_{\kappa_n(t)})\big)  \big|\rrm dh\\&\le
				\left|t-\kappa_n(t) \right|\left( \int_{0}^{1}\bb E_s\big|b(\kappa_n^\tau(t),L_{\kappa_n(t)}+\phi^{(n)}_{\kappa_n(t)})-b(\kappa_n(t),L_{s}+\phi^{(n)}_{s})\big|\rrm dh\right. \\&\qquad\qquad\qquad\quad+\left.\int_{0}^{1}\bb E_s\big|b(\kappa_n(t)+h/n,L_{\kappa_n(t)}+\phi^{(n)}_{\kappa_n(t)})-b(\kappa_n(t),L_{s}+\phi^{(n)}_{s})\big|\rrm dh \right) \\&\le
				C\left|t-\kappa_n(t) \right|\int_{0}^{1}\bb E_s\Big[|\kappa_n^\tau(t)-\kappa_n(t)|^\beta+|h/n|^\beta+\big(\big|L_{\kappa_n(t)}-L_{s}\big|^\eta\wedge1\big)+\big|\phi^{(n)}_{\kappa_n(t)}-\phi^{(n)}_s\big|^\eta\Big]\rrm dh.
			\end{aligned}
		\end{equation*}	
		Therefore, applying Minkowski's inequality and Jensen's inequality, and using the bound $\left| t-\kappa_n(t)\right| \le n^{-1}\wedge\left| t-s\right| $, we obtain
		\begin{equation*}
			\begin{aligned}
				&\big\|\phi^{(n)}_t-\phi^{(n)}_{\kappa_n(t)}-\bb E_s\big[ \phi^{(n)}_t-\phi^{(n)}_{\kappa_n(t)} \big] \big\|_{L^1(\Omega)|\scr F_s}\\&\le 
				C\left|t-\kappa_n(t) \right|\Big(n^{-\beta}+\big\|\big|L_{\kappa_n(t)}-L_{s}\big|^\eta\wedge1\big\|_{L^1(\Omega)|\scr F_s}+\big\|\phi^{(n)}_{\kappa_n(t)}-\phi^{(n)}_s\big\|_{L^1(\Omega)|\scr F_s}^\eta\Big)\\&\le
				Cn^{-\frac{1}{2}-(\beta\wedge\frac{\eta}{\alpha})+\varepsilon}\left| t-s\right| ^{\frac{1}{2}+\varepsilon}.
			\end{aligned}
		\end{equation*}
	\end{proof}
	
	\begin{proposition} \label{prop3.4}
		Let $\beta,\eta\in(0,1]$ and take $\varepsilon\in (0,1/2)$, and $L$ is a $d$-dimensional symmetric $\alpha$-stable process with $\alpha\in(1,2)$ and suppose that \eqref{co:AS} holds. Then for any $g\in C_b^{\beta,\eta}$, there exists a constant $C>0$ such that for any $0\le s\le t\le 1$ and any $\sigma$-algebra $\scr G\subset \scr F_{\kappa_n(s)}$ the following holds 
		\begin{equation}
			\begin{aligned}
				\left\|\int_{s}^{t}\big(g(r,X^{(n)}_r)-g(r,X^{(n)}_{\kappa_n(r)})\big)\rrm dr \right\|_{L^2(\Omega)|\scr G} \le Cn^{-\big(\frac{1}{2}+\beta\wedge\frac{\eta}{\alpha}\wedge\frac{1}{2}\big)+\varepsilon}|t-s|^{\frac{1}{2}+\varepsilon}
			\end{aligned}
		\end{equation}
	\end{proposition}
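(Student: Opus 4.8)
The plan is to realise $\int_s^t\big(g(r,X^{(n)}_r)-g(r,X^{(n)}_{\kappa_n(r)})\big)\rrm dr$ as the increment of a sewing process and to bound it by the conditional shifted stochastic sewing lemma, Theorem \ref{condition sew}, which is the natural tool here because the $\alpha$-stable driver has moments only up to order $\alpha$ and the statement is phrased in the conditional norm $\|\cdot\|_{L^2(\Omega)|\scr G}$. Write $\Phi_r:=g(r,X^{(n)}_r)-g(r,X^{(n)}_{\kappa_n(r)})$, set $S=s$, $T=t$, and for $(s',t')\in\Delta_{s,t}$, $u'=(s'+t')/2$, choose the germ $A_{s',t'}:=\bb E_{s'}\big[\int_{s'}^{t'}\Phi_r\,\rrm dr\big]$, which is $\scr F_{t'}$-measurable. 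Setting $\al A_t:=\int_0^t\Phi_r\,\rrm dr$, I would first check the reconstruction identity $\al A_t-\al A_s=\lim_m\sum_iA_{s_i,s_{i+1}}$ in probability: since $g$ is bounded, $\big\|\int_{s_i}^{s_{i+1}}\Phi_r\,\rrm dr\big\|_{L^2}\le2\|g\|_{C_b^{\beta,\eta}}(s_{i+1}-s_i)$, so the fluctuations $\int_{s_i}^{s_{i+1}}\Phi_r\,\rrm dr-A_{s_i,s_{i+1}}$ are conditionally centred and square-summable along a uniform partition, whence their sum vanishes in $L^2$ and Theorem \ref{condition sew} applies to $\al A$.

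For the second germ condition, $\delta A_{s',u',t'}=\bb E_{s'}\big[\int_{u'}^{t'}\Phi_r\,\rrm dr\big]-\bb E_{u'}\big[\int_{u'}^{t'}\Phi_r\,\rrm dr\big]$, and since $s'-(t'-s')\le s'\le u'$ the tower property gives $\bb E_{s'-(t'-s')}\delta A_{s',u',t'}=0$. Thus $\Gamma_2=\Gamma_3=0$ and the output of Theorem \ref{condition sew} collapses to $\|\al A_t-\al A_s\|_{L^2(\Omega)|\scr G}\le K_1\Gamma_1|t-s|^{1/2+\varepsilon}$; everything is reduced to the germ bound $\|A_{s',t'}\|_{L^2(\Omega)|\scr G}\le\Gamma_1|t'-s'|^{1/2+\varepsilon}$ with $\Gamma_1=Cn^{-(1/2+\gamma)+\varepsilon}$.

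The germ bound is the crux, and I would organise it through the splitting $\Phi_r=\big[g(r,X^{(n)}_r)-g(r,X^{(n)}_{\kappa_n(r)}+L_r-L_{\kappa_n(r)})\big]+\big[g(r,X^{(n)}_{\kappa_n(r)}+L_r-L_{\kappa_n(r)})-g(r,X^{(n)}_{\kappa_n(r)})\big]$, using $X^{(n)}_r=X^{(n)}_{\kappa_n(r)}+(\phi^{(n)}_r-\phi^{(n)}_{\kappa_n(r)})+(L_r-L_{\kappa_n(r)})$. For the second (pure-noise) bracket, conditioning at the grid point $\kappa_n(r)$ — at which $X^{(n)}_{\kappa_n(r)}$ is measurable and $L_r-L_{\kappa_n(r)}$ is independent — turns it into $P_{r-\kappa_n(r)}g(r,\cdot)(X^{(n)}_{\kappa_n(r)})-g(r,X^{(n)}_{\kappa_n(r)})$; taking then the outer expectation $\bb E_{s'}$ and exploiting the semigroup composition $P_{\kappa_n(r)-s'}\big(P_{r-\kappa_n(r)}g-g\big)=P_{r-s'}g-P_{\kappa_n(r)-s'}g=\int_{\kappa_n(r)-s'}^{r-s'}\al LP_vg\,\rrm dv$, whose integrand obeys $\|\al LP_vg(r,\cdot)\|_{C^0_b}\le Cv^{\eta/\alpha-1}$ by Assumption \ref{H2} and the preceding semigroup estimates, I integrate in $r$ to get a bound of order $n^{-1}|t'-s'|^{\eta/\alpha}$; on the sewing simplex this is dominated by $n^{-(1/2+\eta/\alpha)+\varepsilon}|t'-s'|^{1/2+\varepsilon}$ (bounding the surplus time power by $1/2$ when $\eta/\alpha>1/2$, which is exactly where the cap $\wedge\,1/2$ appears), while intervals with $|t'-s'|<1/n$ are dispatched by the crude bound $|A_{s',t'}|\le\int_{s'}^{t'}\|\bb E_{\kappa_n(r)}\Phi_r\|\,\rrm dr$, which closes since $|t'-s'|^{1/2}\le n^{-1/2}$ there. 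The first (drift-increment) bracket is $O(|\phi^{(n)}_r-\phi^{(n)}_{\kappa_n(r)}|^\eta)$ pointwise, and its contribution to the germ I would control through the centred increment estimate of Lemma \ref{lem3.3} together with Lemma \ref{lam3.2}; this is the term that carries the time-regularity exponent $\beta$, since the drift of $\phi^{(n)}$ is evaluated at the randomised time $\kappa^\tau_n$, and combining the three gains yields $\gamma=\beta\wedge(\eta/\alpha)\wedge(1/2)$.

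The hardest point is that between $s'$ and the grid points the Euler drift $\phi^{(n)}$ depends on the driving Lévy path, so the clean identity $\bb E_{s'}[\psi(X^{(n)}_{\kappa_n(r)})]=P_{\kappa_n(r)-s'}\psi(X^{(n)}_{s'})$ used above does not hold exactly. I would remedy this by freezing the drift and treating the bounded-variation remainder as a perturbation, absorbing it via the boundedness of $b$ and the increment bounds of Lemmas \ref{lam3.2}--\ref{lem3.3}; keeping every exponent admissible after integrating the singular kernel $v^{\eta/\alpha-1}$ (and its gradient counterpart $v^{(\eta-1)/\alpha}$ coming from the drift shift) is precisely where the standing condition $(\beta+1)\alpha+\eta>2$ in \eqref{co:AS} enters. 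Assembling the spatial smoothing $\eta/\alpha$, the time regularity $\beta$, and the sewing half-order capped at $1/2$ then produces the claimed rate $n^{-(1/2+\gamma)+\varepsilon}|t-s|^{1/2+\varepsilon}$.
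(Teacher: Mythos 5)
Your overall skeleton is internally consistent: with the germ $A_{s',t'}:=\bb E_{s'}\big[\int_{s'}^{t'}\Phi_r\,\rrm dr\big]$ the consistency with $\al A_t=\int_0^t\Phi_r\,\rrm dr$ does hold (orthogonal martingale differences), the identity $\bb E_{s'-(t'-s')}\delta A_{s',u',t'}=0$ is correct, and your splitting of $\Phi_r$ is exactly the paper's $I_1+I_2$ decomposition via $\phi^{(n)}$. The problem is structural and sits exactly at the point you flag as ``the hardest point.'' By choosing the germ so that the conditional $\delta A$ vanishes, you force \emph{all} of the error into the single bound $\|A_{s',t'}\|_{L^2(\Omega)|\scr G}\le \Gamma_1|t'-s'|^{1/2+\varepsilon}$ with $\Gamma_1\sim n^{-(1/2+\gamma)+\varepsilon}$, and this bound must hold uniformly for intervals $|t'-s'|$ up to order one. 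But the correction coming from the non-adaptedness of $\phi^{(n)}_{\kappa_n(r)}$ carries \emph{no} power of $n^{-1}$: freezing the drift at an $\scr F_{s'}$-measurable prediction and using the H\"older bound together with Lemma \ref{lam3.2} gives an error of the type
\begin{equation*}
\int_{s'}^{t'}\bb E_{s'}\big|\phi^{(n)}_{\kappa_n(r)}-\bb E_{s'}\phi^{(n)}_{\kappa_n(r)}\big|^{\eta}\,\rrm dr\;\lesssim\;|t'-s'|^{1+\eta(1+\gamma)-\varepsilon},
\end{equation*}
which is small as a power of $|t'-s'|$ but not as a power of $n$, and hence cannot be dominated by $n^{-(1/2+\gamma)+\varepsilon}|t'-s'|^{1/2+\varepsilon}$ when $|t'-s'|\sim 1$. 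Interpolating this against the pure-noise bound $n^{-\eta/\alpha}$ does not rescue it, since one would need $\eta(1-\theta)/\alpha\ge 1/2+\gamma-\varepsilon$, which is impossible when $\eta/\alpha\le 1/2$. Terms with exponent $>1$ in $|t-s|$ but no $n$-gain are precisely what the $\bb E\,\delta A$ bucket of the (shifted) sewing lemma exists to absorb; your germ design eliminates that bucket, so the route is closed. Indeed, your required germ bound is equivalent in strength to the proposition itself (it follows from the proposition by conditional Jensen), so no reduction has actually been achieved.

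For comparison, the paper keeps the $\delta A$ bucket alive: it writes the integral as $I_1+I_2$ with $\phi^{(n)}$, and for $I_1$ verifies the hypotheses of \cite[Lemma 4.4]{butkovsky2024strongrateconvergenceeuler} (with $\theta=\eta$, $\tau=1+\beta\wedge(\eta/\alpha)-\varepsilon$, $\varepsilon_0=1$, $\gamma=1/2$, fed by Lemmas \ref{lam3.2} and \ref{lem3.3}), while for $I_2$ it invokes \cite[Lemma 4.7]{butkovsky2024strongrateconvergenceeuler}, using that $\phi^{(n)}_{\kappa_n(t)}$ is measurable one grid step earlier and that $\bb E_s|\bb E_t\phi^{(n)}_r-\bb E_s\phi^{(n)}_r|\lesssim|t-s|^{1+\beta\wedge(\eta/\alpha)-\delta}$. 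Those cited lemmas internally run the shifted sewing argument with germs built from the \emph{predicted} drift $\bb E_{s-(t-s)}\phi^{(n)}_r$ (exactly as in the paper's own Proposition \ref{prop3.6}), so that the freezing error appears in $\bb E_{s-(t-s)}\delta A$, where exponents larger than $1$ in $|t-s|$ are acceptable. To repair your proof you would have to redesign the germ along these lines --- replacing $\phi^{(n)}_r$, $\phi^{(n)}_{\kappa_n(r)}$ by their $\scr F_{s'-(t'-s')}$-predictions --- and then estimate the resulting non-zero $\delta A$; as written, the argument has a genuine gap that cannot be patched by the perturbation device you propose.
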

	\begin{proof}
		Recall $\phi^{(n)}_t$ defined in \eqref{eqiso}, for fixed $0\le s\le t\le 1$, 
		\begin{equation*}
			\begin{aligned}
				&\int_{s}^{t}\big(g(r,X^{(n)}_r)-g(r,X^{(n)}_{\kappa_n(r)})\big)\rrm dr\\&=\int_{s}^{t}\big(g(r,L_r+\phi^{(n)}_r)-g(r,L_r+\phi^{(n)}_{\kappa_n(r)})\big)\rrm dr \\&\quad+\int_{s}^{t}\big(g(r,L_r+\phi^{(n)}_{\kappa_n(r)})-g(r,L_{\kappa_n(r)}+\phi^{(n)}_{\kappa_n(r)})\big)\rrm dr \\&=:I_1+I_2.
			\end{aligned}
		\end{equation*}
		Next, we estimate $I_1$ and $I_2$.  Since  $\phi^{(n)}_r$ and $\phi^{(n)}_{\kappa_n(r)}$ are bounded and adapted processes on $[0,1]$ as shown in Lemma \ref{X_t-x_s}, and for any $0\le s\le t\le 1$ and any $\varepsilon\in(0,1/2)$, Lemma \ref{lam3.2} yields the following estimates
		\begin{align}
			&\bb E_s\big|\phi^{(n)}_t-\bb E_s\phi^{(n)}_t\big|\le C|t-s|^{1+\beta\wedge\frac{\eta}{\alpha}-\varepsilon},\label{Lphi}\\&
			\bb E\big|\phi^{(n)}_{\kappa_n(t)}-\bb E_s\phi^{(n)}_{\kappa_n(t)}\big|\le \bb E\big[\bb E_s\big|\phi^{(n)}_{\kappa_n(t)}-\bb E_s\phi^{(n)}_{\kappa_n(t)}\big|\big]\le C|t-s|.
		\end{align}
		 Furthermore, the conditions $2\eta+\alpha>2$, $(\beta+1)\alpha+\eta>2$ ensure the applicability of \cite[Lemma 4.4]{butkovsky2024strongrateconvergenceeuler}, where we just take $\theta=\eta, \tau=1+\beta\wedge(\eta/\alpha)-\varepsilon,\varepsilon_0=1$ and $\gamma=1/2$. Therefore, by combining Lemmas \ref{lam3.2} and \ref{lem3.3}, we obtain the estimate
		\begin{align*}
			\|I_1\| _{L^2(\Omega)|\scr G}\le C|t-s|^{1+\frac{\eta-1}{\alpha}}n^{-1}+C|t-s|^{\frac{3}{2}+\frac{\eta-1}{\alpha}}n^{-\frac{1}{2}-\big(\beta\wedge\frac{\eta}{\alpha}\wedge\frac{1}{2}\big)+\varepsilon}.
		\end{align*}
		Next, we analysis $I_2$. We choose $\delta$ small enough so that $$\eta>1-\frac{\alpha}{2}+\delta\alpha.$$
		By equation \eqref{Lphi}, for any $0\le s\le r\le 1$ and $t\in[s,1]$, we know 
		\begin{align*}
			\bb E_s\big|\bb E_t\phi^{(n)}_r-\bb E_s \phi^{(n)}_r\big|=\bb E_s\big|\bb E_t\big[\phi^{(n)}_r-\bb E_s \phi^{(n)}_r\big]\big|\le \bb E_s\big|\phi^{(n)}_r-\bb E_s \phi^{(n)}_r\big|\le C|t-s|^{1+\beta\wedge\frac{\eta}{\alpha}-\delta}.
		\end{align*}
		Then, we obtain  $$\tilde \tau:=1+\beta\wedge\frac{\eta}{\alpha}-\delta>\frac{1}{2}+\frac{1}{\alpha}+\beta\wedge\frac{\eta}{\alpha}\wedge\frac{1}{2}-\frac{\eta}{\alpha}.$$
		We also know $\phi^{(n)}_{\kappa_n(t)}$ is $\scr{F}_{(\kappa_n(t)-\frac{1}{n})\vee1 }$-measurable for $t\in[0,1]$. Therefore, by \cite[Lemma 4.7]{butkovsky2024strongrateconvergenceeuler}, we know that 
		$$\|I_2\|_{L^2(\Omega)|\scr G}\le Cn^{-\big(\frac{1}{2}+\beta\wedge\frac{\eta}{\alpha}\wedge\frac{1}{2}\big)+2\varepsilon}|t-s|^{\frac{1}{2}+\varepsilon}.$$
		Finally, we get the result.
	\end{proof}

	\begin{proposition}\label{prop3.6}
		Let $\beta,\eta\in(0,1]$ and take $\varepsilon\in (0,1/2)$, and $L$ is a $d$-dimensional symmetric $\alpha$-stable process with $\alpha\in(1,2)$. Suppose that \eqref{co:AS} holds.
		Then
		for all $g_1\in \mathcal{C}_b^{\beta,\eta}$ and $g_2\in C([0,1]; C^\varpi_b(\bb R^d))$, where $\varpi\in(0,1)$, 
		$0\le S\le T\le 1$, $n\in\bb N$ and any $\sigma$-algebra $\scr G\subset\scr F_{\kappa_n(S)}$, there exists a constant $C$ such that
		\begin{align}
			\begin{split}
			\Bigl\|\int_S^T 	&\big(g_1(r,X^{(n)}_r)-g_1(r,X^{(n)}_{\kappa_n(r)})\big)g_2(r,X^{(n)}_r)\, \rrm dr\Bigr\|_{L^2(\Omega)|\scr G}\\
				&\le C\|g_1\|_{\mathcal{C}^{\alpha,\beta}_b}\|g_2\|_{C_b^{0,\varpi}} |T-S|^{1/2+\varepsilon}n^{-(1/2+\gamma-\varepsilon)},     
			\end{split}
		\end{align}
	where $\gamma=\beta\wedge(\eta/\alpha) \wedge (1/2)$.
	\end{proposition}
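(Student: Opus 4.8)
The plan is to treat this as the weighted analogue of Proposition \ref{prop3.4} and to apply the conditional shifted stochastic sewing lemma (Theorem \ref{condition sew}) with $\scr G\subset\scr F_{\kappa_n(S)}$. Writing $F(r):=g_1(r,X^{(n)}_r)-g_1(r,X^{(n)}_{\kappa_n(r)})$, the decisive structural point is that $g_2$ is only spatially $\varpi$-H\"older and carries no time regularity, so the germ must keep the time slot of $g_2$ glued to the integration variable. Accordingly I would take
\[
A_{s,t}:=\int_s^t F(r)\,g_2\big(r,X^{(n)}_s\big)\,\rrm dr,\qquad (s,t)\in\Delta_{S,T},
\]
freezing only the spatial slot at the $\scr F_s$-measurable point $X^{(n)}_s=L_s+\phi^{(n)}_s$. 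Since $g_2(r,\cdot)$ is continuous and $X^{(n)}$ is c\`adl\`ag, in the sewing limit the frozen spatial argument relaxes back, $g_2(r,X^{(n)}_{s_i})\to g_2(r,X^{(n)}_r)$ for a.e.\ $r$, so the process produced by the lemma is exactly the target integral; and because both evaluations of $g_2$ entering any increment of $A$ share the same first argument $r$, the argument never differences $g_2$ in time.

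For the single-scale bound I would establish $\|A_{s,t}\|_{L^2(\Omega)|\scr G}\le C\|g_1\|_{C_b^{\beta,\eta}}\|g_2\|_{C_b^{0,\varpi}}\,n^{-(1/2+\gamma-\varepsilon)}|t-s|^{1/2+\varepsilon}$. One cannot simply pull $\|g_2\|_{C_b^{0,\varpi}}$ out of the $r$-integral without destroying the cancellation that produces the factor $n^{-(1/2+\gamma-\varepsilon)}$, so I would instead reproduce the proof of Proposition \ref{prop3.4} with the bounded multiplier $g_2(r,X^{(n)}_s)$ inserted: split $F=F_1+F_2$ into the $\phi^{(n)}$-oscillation and the $L$-oscillation parts exactly as there, and note that for every cell with $\kappa_n(r)\ge s$ the frozen weight $g_2(r,X^{(n)}_s)$ is $\scr F_{\kappa_n(r)}$-measurable, hence factors out of the conditional expectations at the grid times that generate the gain (so Lemmas \ref{lam3.2}, \ref{lem3.3} and \cite[Lemmas 4.4, 4.7]{butkovsky2024strongrateconvergenceeuler} apply up to the harmless constant $\|g_2\|_{C_b^{0,\varpi}}$); the single cell containing $s$ has length $\le1/n$ and is absorbed using \eqref{gprop} together with Assumption \ref{H3}.

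For the two-scale bound with $u=(s+t)/2$ one computes $\delta A_{s,u,t}=\int_u^t F(r)\big(g_2(r,X^{(n)}_s)-g_2(r,X^{(n)}_u)\big)\rrm dr$, in which the two copies of $g_2$ again carry the \emph{same} time argument $r$, so \eqref{gprop} bounds their difference by $\|g_2\|_{C_b^{0,\varpi}}(|X^{(n)}_s-X^{(n)}_u|^{\varpi}\wedge1)$; by Lemma \ref{X_t-x_s} and Assumption \ref{H3} this has conditional $L^q$-norm of order $|t-s|^{\varpi/\alpha-\varepsilon}$, which supplies the extra power of $|t-s|$ beyond $1$. The full $n$-gain must still be extracted from $F$: here I would keep $F$ inside the signed integral and exploit the shifted conditioning $\bb E_{s-(t-s)}$ — the very mechanism of Theorem \ref{condition sew} — together with the L\'evy smoothing, rather than a crude pointwise bound on $F$ (which would only give the insufficient gain $n^{-\eta/\alpha}$). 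This yields $\|\bb E_{s-(t-s)}\delta A_{s,u,t}\|_{L^2(\Omega)|\scr G}\le C\|g_1\|_{C_b^{\beta,\eta}}\|g_2\|_{C_b^{0,\varpi}}\,n^{-(1/2+\gamma-\varepsilon)}|t-s|^{1+\varepsilon_2}$ for some $\varepsilon_2>0$ controlled by $\varpi/\alpha$.

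Finally I would feed $\Gamma_1,\Gamma_2,\Gamma_3$ of the above orders into Theorem \ref{condition sew}; since $|T-S|\le1$ makes the $|T-S|^{1+\varepsilon_i}$ contributions dominated by $|T-S|^{1/2+\varepsilon}$, and since the sewn process equals the target, the stated estimate follows. The hard part is the pair of weighted estimates above: because $g_2$ has no time regularity it must remain inside the time integral throughout, so Proposition \ref{prop3.4} cannot be invoked as a black box and its cancellation must be regenerated in the presence of the multiplier $g_2$. The two facts that rescue the argument are that the frozen spatial slot makes $g_2$ measurable with respect to the relevant grid $\sigma$-algebras, and that the two $g_2$-evaluations in $\delta A$ always share the same time argument, so only the spatial $\varpi$-H\"older norm is ever used.
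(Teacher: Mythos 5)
Your high-level frame (conditional shifted SSL, freezing only the spatial slot of $g_2$, keeping its time slot glued to $r$, splitting the $g_1$-increment into $\phi^{(n)}$- and $L$-oscillation parts) matches the paper, but your germ differs from the paper's in a way that is not cosmetic, and the difference is where your argument breaks. The paper's germ is
\begin{equation*}
A^1_{s,t}=\bb E_{s-(t-s)}\int_s^t\big(g_1(r,L_r+\bb E_{s-(t-s)}\phi^{(n)}_r)-g_1(r,L_r+\bb E_{s-(t-s)}\phi^{(n)}_{\kappa_n(r)})\big)\,g_2\big(r,X^{(n)}_{s-(t-s)}\big)\,\rrm dr
\end{equation*}
(similarly for the $L$-oscillation part): the $\phi^{(n)}$-arguments inside $g_1$ are \emph{frozen} as conditional expectations at the shifted time $s_1=s-(t-s)$, and the whole integrand sits inside $\bb E_{s_1}$. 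This freezing is what makes both sewing inputs easy. For the one-scale bound, $\bb E_{s_1}\big[g_1(r,L_r+Y)-g_1(r,L_r+Y')\big]$ with $\scr F_{s_1}$-measurable $Y,Y'$ is a difference of semigroup evaluations of $P_{r-s_1}g_1(r,\cdot)$, so the heat-kernel-type smoothing (Assumption \ref{H1}) gives the pointwise bound $C|r-s_1|^{\frac{\eta-1}{\alpha}}\big|\bb E_{s_1}[\phi^{(n)}_r-\phi^{(n)}_{\kappa_n(r)}]\big|\le C|r-s_1|^{\frac{\eta-1}{\alpha}}n^{-1}$; the weight $g_2$ can then be bounded crudely by $\|g_2\|_\infty$ \emph{inside} the integral without losing any cancellation, so no ``weighted Proposition \ref{prop3.4}'' is ever needed. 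For the two-scale bound, $\delta A^1$ contains differences such as $\bb E_{s_1}[\phi^{(n)}_r-\phi^{(n)}_{\kappa_n(r)}]-\bb E_{s_2}[\phi^{(n)}_r-\phi^{(n)}_{\kappa_n(r)}]$ (with $s_2=s-(u-s)$), and it is exactly Lemma \ref{lem3.3} applied to these terms that produces the combined gain $n^{-\frac12-(\beta\wedge\frac{\eta}{\alpha})+\varepsilon}|t-s|^{\frac12+\varepsilon}$, i.e.\ a factor small in $n$ \emph{and} superlinear in $|t-s|$ simultaneously.

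With your germ $A_{s,t}=\int_s^t F(r)\,g_2(r,X^{(n)}_s)\,\rrm dr$ (raw $F$, no outer conditional expectation, no frozen arguments) this mechanism is unavailable, and the two-scale bound genuinely fails. Your identity $\delta A_{s,u,t}=\int_u^t F(r)\big(g_2(r,X^{(n)}_s)-g_2(r,X^{(n)}_u)\big)\rrm dr$ is correct, but precisely because the $g_1$-part cancels identically in $\delta A$, the entire $n$-gain must now be extracted from $F$ alone, multiplied against an $\scr F_u$-measurable factor. Conditional Cauchy--Schwarz gives only $n^{-\eta/\alpha}$, as you concede. The only signed device left is the tower property: for cells with $\kappa_n(r)\ge u$ one has $\bb E_{s_1}[F(r)G(r)]=\bb E_{s_1}\big[G(r)\,\bb E_{\kappa_n(r)}F(r)\big]$, and single-cell semigroup smoothing yields $|\bb E_{\kappa_n(r)}F(r)|\le Cn^{-(1+\frac{\eta-1}{\alpha})}$; but $1+\frac{\eta-1}{\alpha}=\frac12+\frac{\eta}{\alpha}-\big(\frac1\alpha-\frac12\big)<\frac12+\gamma$ whenever $\alpha<2$ and $\gamma=\eta/\alpha$, so this is strictly too weak --- the missing $n^{-1/2}$ is a multi-cell cancellation effect, not a one-cell one. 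Alternatively, conditioning at $u$ and inserting $\bb E_u\phi^{(n)}_r$ into $g_1$ creates correction terms $g_1(r,L_r+\phi^{(n)}_r)-g_1(r,L_r+\bb E_u\phi^{(n)}_r)$ which, by Lemma \ref{lam3.2} and \eqref{gprop}, are small in $|t-s|$ but carry \emph{no} power of $n$; fed into Theorem \ref{condition sew} they leave an $n$-independent contribution $\Gamma_2|T-S|^{1+\varepsilon_2}$ that destroys the rate. So the sentence ``keep $F$ inside the signed integral and exploit the shifted conditioning together with the L\'evy smoothing'' is precisely the point where a concrete device must be supplied, and the device is the paper's frozen-conditional-expectation germ, not any property of yours. (Your one-scale bound has the same defect in milder form: ``reproducing Proposition \ref{prop3.4} with a bounded multiplier inserted'' requires reopening the stochastic-sewing proofs behind \cite[Lemmas 4.4, 4.7]{butkovsky2024strongrateconvergenceeuler}, since the $r$-dependent weight cannot be pulled out of the $r$-integral; carried out honestly, that rederivation is the paper's construction.)
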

	\begin{proof}
			Define and divide the process as follows
		\begin{align*}
			\tilde{\al{A}}_{t}:=\int_0^t (g_1(r,X^{(n)}_r)-g_1(r,X^{(n)}_{\kappa_n(r)}))g_2(r,X^{(n)}_r)\, \rrm d r:= \tilde{\mathcal{A}}_{t}^1 + \tilde{\mathcal{A}}_{t}^2,
		\end{align*}
		where
		$$
		\tilde {\al A}_t^1:=\int_0^t  \big(g_1(r,L_r+\phi^{(n)}_r)-g_1(r,L_r+\phi^{(n)}_{\kappa_n(r)})\big)g_2(r,X^{(n)}_r)\,\rrm d r,
		$$
		and
		$$\tilde {\al A}_t^2:=\int_0^t  \big(g_1(r,L_r+\phi^{(n)}_{\kappa_n(r)})-g_1(r,L_{\kappa_n(r)}+\phi^{(n)}_{\kappa_n(r)})\big)g_2(r,X^{(n)}_r)\,\rrm d r,
		$$
		We will use conditional shifted sewing lemma Theorem \ref{condition sew} to prove this proposition. Therefore, for any $(s,t)\in\Delta_{S,T}$, let
		$$
		A_{s,t}^1:=\bb E_{s-(t-s)}\int_{s}^{t}\Big( \big(g_1(r,L_r+\bb E_{s-(t-s)}\phi^{(n)}_r)-g_1(r,L_{r}+\bb E_{s-(t-s)}\phi^{(n)}_{\kappa_n(r)})\big)g_2(r,X^{(n)}_{s-(t-s)})\Big)\rrm dr,
		$$
		and
		$$
		A_{s,t}^2:=\bb E_{s-(t-s)}\int_{s}^{t} \Big(\big(g_1(r,L_{\kappa_n(r)}+\bb E_{s-(t-s)}\phi^{(n)}_{\kappa_n(r)})-g_1(r,L_{r}+\bb E_{s-(t-s)}\phi^{(n)}_{\kappa_n(r)})\big)g_2(r,X^{(n)}_{s-(t-s)})\Big)\rrm dr,
		$$
		To simplify the notation, we define $s_1=s-(t-s), s_2=s-(u-s),s_3=s,s_4=u,s_5=t$, where $u=(s+t)/2$. Obviously, $s_1\le s_2\le s_3\le s_4\le s_5$.Using the regularity of $g_1$ and the boundedness of  $g_2$, we obtain the estimate
		\begin{align*}
			|A^1_{s,t}|&\le \bb E_{s_1}\int_{s_3}^{s_5} \big|\big(g_1(r,L_r+\bb E_{s_1}\phi^{(n)}_r)-g_1(r,L_{r}+\bb E_{s_1}\phi^{(n)}_{\kappa_n(r)})\big)g_2(r,X^{(n)}_{s_1})\big|\rrm dr\\&\le \left\|g_2 \right\|_\infty\|g_1\|_{C_b^{\beta,\eta}}\int_s^t|r-s|^{\frac{\eta-1}{\alpha}}\big|\bb E_{s_1}[\phi^{(n)}_r-\phi^{(n)}_{\kappa_n(r)}]\big|\rrm dr
		\end{align*}
		Using the regularity of $g_1$, this further yields
		\begin{align*}
			\left\|A^1_{s,t} \right\| _{L^2(\Omega)|\scr G}&\le \left\|g_2 \right\|_\infty\|g_1\|_{C_b^{\beta,\eta}}\int_s^t|r-s|^{\frac{\eta-1}{\alpha}}\big\|\bb E_{s_1}[\phi^{(n)}_r-\phi^{(n)}_{\kappa_n(r)}]\big\|_{L^2(\Omega)|\scr G}\rrm dr\\&\le C\left| t-s\right| ^{1+\frac{\eta-1}{\alpha}}n^{-1}.
		\end{align*}	
		Next, we estimate $\bb E_{s-(t-s)}\delta A_{(s,u,t)}$,
		\begin{align*}
			\delta A^1_{s,u,t}&=A^1_{s_3,s_5}-A^1_{s_3,s_4}-A^1_{s_4,s_5}\\
			&=\int_{s_3}^{s_4}g_2(r,X^{(n)}_{s_1})\times\\&\qquad\Big(\bb E_{s_1} \big[g_1(r,L_r+\bb E_{s_1}\phi^{(n)}_r)-g_1(r,L_r+\bb E_{s_1}\phi^{(n)}_{\kappa_n(r)})\big]-\bb E_{s_2}\big[g_1(r,L_r+\bb E_{s_2}\phi^{(n)}_r)-g_1(r,L_r+\bb E_{s_2}\phi^{(n)}_{\kappa_n(r)})\big]\Big)\rrm dr\\&\quad+\int_{s_3}^{s_4}\big(g_2(r,X^{(n)}_{s_1})-g_2(r,X^{(n)}_{s_2})\big)\bb E_{s_2}\big[g_1(r,L_r+\bb E_{s_2}\phi^{(n)}_r)-g_1(r,L_r+\bb E_{s_2}\phi^{(n)}_{\kappa_n(r)})\big]\rrm dr
			\\&\quad+ \int_{s_4}^{s_5}g_2(r,X^{(n)}_{s_1})\times\\&\qquad\Big(E_{s_1}\big[g_1(r,L_r+\bb E_{s_1}\phi^{(n)}_r)-g_1(r,L_r+\bb E_{s_1}\phi^{(n)}_{\kappa_n(r)})\big]-\bb E_{s_3}\big[g_1(r,L_r+\bb E_{s_3}\phi^{(n)}_r)+g_1(r,L_r+\bb E_{s_3}\phi^{(n)}_{\kappa_n(r)})\big]\Big)\rrm dr\\&\quad+ \int_{s_4}^{s_5}\big(g_2(r,X^{(n)}_{s_1})-g_2(r,X^{(n)}_{s_3})\big)\bb E_{s_3}\big[g_1(r,L_r+\bb E_{s_3}\phi^{(n)}_r)-g_1(r,L_r+\bb E_{s_3}\phi^{(n)}_{\kappa_n(r)})\big]\rrm dr\\&=:J_1+J_2+J_3+J_4.
		\end{align*}
		Since $g_2$ is bounded, we estimate $J_1$ and $J_2$ as follows
		\begin{align*}
			|J_1|&\le \int_{s_3}^{s_4}\Big|g_2(r,X^{(n)}_{s_1})\times\\&\qquad\Big(\bb E_{s_1} \big[g_1(r,L_r+\bb E_{s_1}\phi^{(n)}_r)-g_1(r,L_r+\bb E_{s_1}\phi^{(n)}_{\kappa_n(r)})\big]-\bb E_{s_2}\big[g_1(r,L_r+\bb E_{s_2}\phi^{(n)}_r)-g_1(r,L_r+\bb E_{s_2}\phi^{(n)}_{\kappa_n(r)})\big]\Big)\Big|\rrm dr\\&\le
			\|g_2\|_\infty\|g_1\|_{C_b^{\beta,\eta}}\int_{s_3}^{s_4}|r-s_2|^{\frac{\eta-1}{\alpha}}\bb E_{s_1}|\bb E_{s_1}[\phi^{(n)}_r -\phi^{(n)}_{\kappa_n(r)}]- \bb E_{s_2}[\phi^{(n)}_r -\phi^{(n)}_{\kappa_n(r)}]|\rrm dr\\&\quad+	\|g_2\|_\infty\|g_1\|_{C_b^{\beta,\eta}}\int_{s_3}^{s_4}|r-s_2|^{\frac{\eta-2}{\alpha}}\bb E_{s_1}|\phi^{(n)}_r -\phi^{(n)}_{\kappa_n(r)}|\bb E_{s_1}|\bb E_{s_1}\phi^{(n)}_r - \bb E_{s_2}\phi^{(n)}_r|\rrm dr.
		\end{align*} 
		And
		\begin{align*}
			|J_2|&\le \int_{s_3}^{s_4}\big|\big(g_2(r,X^{(n)}_{s_1})-g_2(r,X^{(n)}_{s_2})\big)\bb E_{s_2}\big[g_1(r,L_r+\bb E_{s_2}\phi^{(n)}_r)-g_1(r,L_r+\bb E_{s_2}\phi^{(n)}_{\kappa_n(r)})\big]\big|\rrm dr\\&\le 
			2\|g_2\|_\infty\|g_1\|_{C_b^{\beta,\eta}}\int_{s_3}^{s_4}|r-s_2|^{\frac{\eta-1}{\alpha}}\big|\bb E_{s_2}[\phi^{(n)}_r-\phi^{(n)}_{\kappa_n(r)}]\big|\rrm dr.
		\end{align*} 
	We apply similar techniques to $J_3$ and $J_4$. Then, using the conditional Minkowski's inequality, we obtain the estimate
		\begin{align*}
			&\Big\|\bb E_{s-(t-s)}\delta A^1_{(s,u,t)}\Big\|_{L^2(\Omega)|\scr G}\\&\le C\int_{s_3}^{s_4}|r-s_2|^{\frac{\eta-1}{\alpha}}\Big\|\bb E_{s_1}|\bb E_{s_1}[\phi^{(n)}_r -\phi^{(n)}_{\kappa_n(r)}]- \bb E_{s_2}[\phi^{(n)}_r -\phi^{(n)}_{\kappa_n(r)}]|\Big\|_{L^2(\Omega)|\scr G}\rrm dr\\&\quad+	C\int_{s_3}^{s_4}|r-s_2|^{\frac{\eta-2}{\alpha}}\Big\|\bb E_{s_1}|\phi^{(n)}_r -\phi^{(n)}_{\kappa_n(r)}|\Big\|_{L^2(\Omega)|\scr G}\Big\|\bb E_{s_1}|\bb E_{s_1}\phi^{(n)}_r - \bb E_{s_2}\phi^{(n)}_r|\Big\|_{L^2(\Omega)|\scr G}\rrm dr\\&\quad+	C\int_{s_3}^{s_4}|r-s_2|^{\frac{\eta-1}{\alpha}}\big\|\bb E_{s_2}[\phi^{(n)}_r-\phi^{(n)}_{\kappa_n(r)}]\big\|_{L^2(\Omega)|\scr G}\rrm dr\\&
			\quad+C\int_{s_4}^{s_5}|r-s_3|^{\frac{\eta-1}{\alpha}}\Big\|\bb E_{s_1}|\bb E_{s_1}[\phi^{(n)}_r -\phi^{(n)}_{\kappa_n(r)}]- \bb E_{s_3}[\phi^{(n)}_r -\phi^{(n)}_{\kappa_n(r)}]|\Big\|_{L^2(\Omega)|\scr G}\rrm dr\\&\quad+	C\int_{s_4}^{s_5}|r-s_3|^{\frac{\eta-2}{\alpha}}\Big\|\bb E_{s_1}|\phi^{(n)}_r -\phi^{(n)}_{\kappa_n(r)}|\Big\|_{L^2(\Omega)|\scr G}\Big\|\bb E_{s_1}|\bb E_{s_1}\phi^{(n)}_r - \bb E_{s_3}\phi^{(n)}_r|\Big\|_{L^2(\Omega)|\scr G}\rrm dr\\&\quad+	C\int_{s_4}^{s_5}|r-s_3|^{\frac{\eta-1}{\alpha}}\big\|\bb E_{s_3}[\phi^{(n)}_r-\phi^{(n)}_{\kappa_n(r)}]\big\|_{L^2(\Omega)|\scr G}\rrm dr\\&\le Cn^{-\frac{1}{2}-(\beta\wedge\frac{\eta}{\alpha})+\varepsilon}\left| t-s\right| ^{\frac{3}{2}+\frac{\eta-1}{\alpha}+\varepsilon}+Cn^{-1}\left| t-s\right|.
		\end{align*}
		Define partition points: $t_i=s+i\frac{t-s}{m}$, $i={1,2,\cdots,m}$. Next we will prove $\left|\tilde{\al A}_t^1-\tilde{\al A}_s^1-\sum_{i=1}^{m-1}A^1_{t_i,t_{i+1}} \right|\to 0$ when $m\to\infty$ in probability. Since,
		\begin{equation*}
			\begin{aligned}
				&\left|\tilde{\al A}_t^1-\tilde{\al A}_s^1-\sum_{i=1}^{m-1}A^1_{t_i,t_{i+1}} \right|\\& \le \left\|g_2 \right\|_{C_b^{0,\varpi}} \sum_{i=1}^{m-1}\int_{t_i}^{t_{i+1}}\big|g_1(r,L_r+\phi^{(n)}_r)-\bb E_{t_{i-1}}g_1(r,L_{r}+\bb E_{t_{i-1}}\phi^{(n)}_r) \big| \rrm dr\\&\quad+\left\|g_2 \right\|_{C_b^{0,\varpi}}\sum_{i=1}^{m-1}\int_{t_i}^{t_{i+1}}\big|g_1(r,L_{r}+\phi^{(n)}_{\kappa_n(r)})-\bb E_{t_{i-1}}g_1(r,L_{r}+\bb E_{t_{i-1}}\phi^{(n)}_{\kappa_n(r)}) \big| \rrm dr\\&\quad+\left\|g_2 \right\|_{C_b^{0,\varpi}}\int_{t_0}^{t_1}\big|g_1(r,L_r+\phi^{(n)}_r)-g_1(r,L_r+\phi^{(n)}_{\kappa_n(r)}) \big| \rrm dr\\&\quad+2\left\|g_1 \right\|_{C_b^{\beta,\eta}} \sum_{i=1}^{m-1}\int_{t_i}^{t_{i+1}}\big|g_2(r,X^{(n)}_r)-g_2(r,X^{(n)}_{t_{i-1}})\big|\rrm dr
				\\&=:I_1+I_2+I_3+I_4.
			\end{aligned}
		\end{equation*}
		Since $g_1(r,L_{t_{i-1}}+\bb E_{t_{i-1}}\phi_r^{(n)}$ is $\scr F_{t_{i-1}}$-measurable and we use Lemma \ref{lam3.2} and Jensen's inequality. We can deduce that for any $\varepsilon>0$
		\begin{equation*}
			\begin{aligned}
				&\bb E \big|g_1(r,L_r+\phi^{(n)}_r)-\bb E_{t_{i-1}}g_1(r,L_r+\bb E_{t_{i-1}}\phi^{(n)}_r) \big|\\&\le
				\bb E\big|g_1(r,L_r+\phi^{(n)}_r)-g_1(r,L_{t_{i-1}}+\bb E_{t_{i-1}}\phi^{(n)}_r)+g_1(r,L_{t_{i-1}}+\bb E_{t_{i-1}}\phi^{(n)}_r)-\bb E_{t_{i-1}}g_1(r,L_r+\bb E_{t_{i-1}}\phi^{(n)}_r) \big|\\&\le 
				2\bb E\big|g_1(r,L_r+\phi^{(n)}_r)-g_1(r,L_{t_{i-1}}+\bb E_{t_{i-1}}\phi^{(n)}_r)\big|\\&\le 
				C\left\|g_1\right\|_{C_b^{\beta,\eta}} \big(\bb E\big|\phi^{(n)}_r-\bb E_{t_{i-1}}\phi^{(n)}_r\big|^{\eta}+\bb E[|L_r-L_{t_{i-1}}|^{\eta}\wedge1]\big)\\&\le C\left|r-t_{i-1} \right| ^{\eta(1+\beta\wedge\frac{\eta}{\alpha}-\varepsilon)}+C|r-t_{i-1}|^{\frac{\eta}{\alpha}-\varepsilon}.
			\end{aligned}
		\end{equation*}
		Then it yields that $\bb E|I_1|\le C m ^{-\eta(1+\beta\wedge\frac{\eta}{\alpha}-\varepsilon)}+Cm^{-\frac{\eta}{\alpha}+\varepsilon}$.
		Similarly, we obtain the bound for $\bb E|I_2|\le C m ^{-\eta(1+\beta\wedge\frac{\eta}{\alpha}-\varepsilon)}+Cm^{-\frac{\eta}{\alpha}+\varepsilon}$. 
		This follows because when $\kappa_n(r)\le t_{i-1}$, the term $\phi^{(n)}_{\kappa_n(r)}$ is measurable. On the other hand, if $\kappa_n(r)> t_{i-1}$, we have the bound $\kappa_n(r)-t_{i-1}\le t_{i+1}-t_{i-1}$.
		Since $g_1$ is bounded, it yields that $$\bb E|I_3|\le Cm^{-1}.$$
		By Lemma \ref{X_t-x_s}, we have $$\bb E|I_4|\le Cm^{-\frac{\varpi}{\alpha}}.$$	
		According to Markov's inequality, for every $\delta>0$ we know 
		\begin{align*}
			\lim\limits_{m\to \infty}\left\{\left|\tilde{\al A}_t^1-\tilde{\al A}_s^1-\sum_{i=1}^{m-1}A^1_{t_i,t_{i+1}} \right|\ge \delta\right\}&\le\delta^{-1}\lim\limits_{m\to \infty}\bb E\left|\tilde{\al A}_t^1-\tilde{\al A}_s^1-\sum_{i=1}^{m-1}A^1_{t_i,t_{i+1}} \right| \\&
			=\delta^{-1}\lim\limits_{m\to \infty}\big(\bb E|I_1|+\bb E|I_2|+\bb E|I_3|+\bb E|I_4|\big)\\&=0.
		\end{align*}
		For $\tilde{\al A}^2$ and $A^2$, we apply the same method. Finally, we have 
		\begin{equation*}
			\begin{aligned}
				\big\|\tilde {\al A}_T-\tilde {\al A}_S\big\|_{L^2(\Omega)|\scr G}&\le 	\big\|\tilde {\al A}_T^1-\tilde {\al A}^1_S\big\|_{L^2(\Omega)|\scr G}+	\big\|\tilde {\al A}^2_T-\tilde {\al A}^2_S\big\|_{L^2(\Omega)|\scr G}\\&\le
				C\|g_1\|_{\mathcal{C}^{\alpha,\beta}_b}\|g_2\|_{C_b^{0,\varpi}} |T-S|^{1/2+\varepsilon}n^{-(1/2+\gamma-\varepsilon)}.
			\end{aligned}
		\end{equation*}
	\end{proof}
	\begin{proposition}\label{prop3.7}
		Let $\beta,\eta\in(0,1]$, $p\ge1$, and take $\varepsilon\in (0,1/2)$.  Suppose that \eqref{co:AS} holds.
		Then
		for all $g_1\in \mathcal{C}_b^{\beta,\eta}$ and $g_2\in C([0,1]; C^\varpi_b(\bb R^d))$, where $\varpi\in(0,1)$, 
		$0\le s\le t\le 1$, $n\in\bb N$ and any $p\ge1 $, there exists a constant $C$ such that
		\begin{align}
			\begin{split}
				&\Bigl\|\sup_{s \le u \le t}\Big|\int_s^u \big(g_1(r,X^{(n)}_r)-g_1(\kappa^\tau_n(r),X^{(n)}_{\kappa_n(r)})\big)g_2(r,X^{(n)}_r)\, \rrm dr\Big|\Bigr\|_{L^p(\Omega)}\le Cn^{-(1/2+\gamma-\varepsilon)}, 
			\end{split}
		\end{align}
		In particular, we have 
		\begin{align}
			\begin{split}
				&\Bigl\|\sup_{s \le u \le t}\Big|\int_s^u \big(g_1(r,X^{(n)}_{\kappa_n(r)})-g_1(\kappa^\tau_n(r),X^{(n)}_{\kappa_n(r)})\big)g_2(r,X^{(n)}_r)\, \rrm dr\Big|\Bigr\|_{L^p(\Omega)}\le Cn^{-(1/2+\gamma-\varepsilon)}, 
			\end{split}
		\end{align}
		where $\gamma=\beta\wedge(\eta/\alpha) \wedge (1/2)$.
	\end{proposition}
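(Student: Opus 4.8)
The plan is to split the integrand at the grid so that one piece is governed by the spatial estimate of Proposition \ref{prop3.6} and the other by the time estimate \eqref{eq10} of Lemma \ref{lem3.1}, and then to promote the resulting fixed-endpoint bounds to a bound on the running supremum in $L^p$. First I would write, for every $r$,
\begin{equation*}
g_1(r,X^{(n)}_r)-g_1(\kappa^\tau_n(r),X^{(n)}_{\kappa_n(r)})=\big(g_1(r,X^{(n)}_r)-g_1(r,X^{(n)}_{\kappa_n(r)})\big)+\big(g_1(r,X^{(n)}_{\kappa_n(r)})-g_1(\kappa^\tau_n(r),X^{(n)}_{\kappa_n(r)})\big),
\end{equation*}
so that the integral in the statement splits as $\al A^{\mathrm{sp}}_u+\al A^{\mathrm{t}}_u$, where $\al A^{\mathrm{sp}}_u$ carries the spatial increment weighted by $g_2(r,X^{(n)}_r)$ and $\al A^{\mathrm{t}}_u$ is exactly the object estimated in the second assertion of the proposition. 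The second (``in particular'') assertion is then literally \eqref{eq10}, upgraded to the supremum. Since $\gamma=\beta\wedge(\eta/\alpha)\wedge(1/2)\le\beta$, the rate $n^{-(1/2+\beta-\varepsilon)}$ delivered by \eqref{eq10} is no worse than the target $n^{-(1/2+\gamma-\varepsilon)}$, so it suffices to control both pieces on any subinterval $[S,T]\subset[s,t]$ with the common spatial factor $|T-S|^{1/2+\varepsilon}$ and then pass to the supremum.

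Next I would address the supremum. Both germs are additive in the upper limit, and because $g_1,g_2$ are bounded the integrands are bounded by $2\|g_1\|_{C_b^{\beta,\eta}}\|g_2\|_{C_b^{0,\varpi}}$; hence $u\mapsto \al A^{\mathrm{sp}}_u$ and $u\mapsto\al A^{\mathrm{t}}_u$ are Lipschitz with a deterministic constant and in particular possess moments of every order. For $\al A^{\mathrm{t}}$ the fixed-endpoint bound \eqref{eq10} already lives in $L^p$ with a Hölder exponent $1/2+\varepsilon>1/2$ in the time variable, so a Kolmogorov/Garsia--Rodemich--Rumsey argument (taking $p$ large enough that $p(1/2+\varepsilon)>1$ and then reducing to smaller $p$ via $\|\cdot\|_{L^p}\le\|\cdot\|_{L^{p'}}$ on a probability space) converts the increment bound into the desired bound on $\sup_{s\le u\le t}|\al A^{\mathrm{t}}_u|$.

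For $\al A^{\mathrm{sp}}$ I only have the conditional $L^2$ estimate of Proposition \ref{prop3.6}, valid on every subinterval and conditional on any $\scr G\subset\scr F_{\kappa_n(S)}$, so a preliminary promotion from conditional $L^2$ to $L^p$ is needed before the supremum step. I would carry this out by a dyadic chaining over $[s,t]$: split the additive germ along a dyadic grid, estimate each block in conditional $L^2$ anchored at its own left endpoint $\scr F_{\kappa_n(S_i)}$, and recombine through the tower property together with the a priori Lipschitz (hence $L^\infty$) control at the finest scales; the surplus exponent $\varepsilon$ in $|T-S|^{1/2+\varepsilon}$ makes the dyadic sum converge and yields $\|\al A^{\mathrm{sp}}_T-\al A^{\mathrm{sp}}_S\|_{L^p}\le Cn^{-(1/2+\gamma-\varepsilon)}|T-S|^{1/2+\varepsilon}$ for every $p\ge1$. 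Feeding this increment bound into the same Kolmogorov/GRR step controls $\sup_{s\le u\le t}|\al A^{\mathrm{sp}}_u|$, and adding the two pieces gives the claim.

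The main obstacle I anticipate is precisely this conditional-$L^2$-to-$L^p$ promotion for the spatial term, which is a John--Nirenberg-type phenomenon for germs that are conditionally Hölder when anchored at the left endpoint: one must keep the conditioning fixed at the moving left endpoints $\kappa_n(S_i)$ so that Proposition \ref{prop3.6} applies block by block, and verify that the tower property stitches the conditional bounds without losing the $|T-S|^{1/2+\varepsilon}$ scaling. The boundedness of the integrands is what rescues the argument at the smallest scales, and the strict surplus $\varepsilon>0$ in the Hölder exponent is what guarantees summability of the chaining; by contrast, passing from a Hölder-$(1/2+\varepsilon)$ increment bound to the supremum is routine.
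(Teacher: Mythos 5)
Your decomposition is exactly the paper's: the paper splits the germ into $\al A^1_t$ (spatial increment $g_1(r,X^{(n)}_r)-g_1(r,X^{(n)}_{\kappa_n(r)})$ weighted by $g_2$) and $\al A^2_t$ (time increment at the grid), treats $\al A^2$ by applying the Kolmogorov continuity theorem to \eqref{eq10} of Lemma \ref{lem3.1}, and treats $\al A^1$ via the conditional $L^2$ bound of Proposition \ref{prop3.6}. Your handling of the time piece and your observation that $\gamma\le\beta$ makes the rate $n^{-(1/2+\beta-\varepsilon)}$ sufficient are both fine and coincide with the paper.

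The gap is in the step you yourself flag as the main obstacle: promoting the conditional $L^2$ estimate for the spatial piece to an $L^p$ bound ($p>2$) by ``dyadic chaining + tower property + Lipschitz control at the finest scales.'' This mechanism does not deliver the stated rate. Chaining requires $L^p$ moments of the individual dyadic increments, and a conditional $L^2$ bound with deterministic right-hand side gives no moment of order higher than $2$ for a single block; the conditional distribution of a block may have arbitrarily heavy tails beyond its second moment, and the tower property cannot manufacture the missing integrability. The only unconditional higher-moment information you have is the deterministic Lipschitz bound, but interpolating it against the $L^2$ estimate, $\|B\|_{L^p}\le\|B\|_{L^2}^{2/p}\|B\|_{L^\infty}^{1-2/p}$, carries the factor $n^{-(1/2+\gamma-\varepsilon)}$ only to the power $2/p$, so the chaining sum produces a rate of order $n^{-2(1/2+\gamma-\varepsilon)/p}$, which degrades as $p$ grows and misses the target $n^{-(1/2+\gamma-\varepsilon)}$ for every $p>2$. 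What actually closes this step is the genuine John--Nirenberg phenomenon you name but do not use: because Proposition \ref{prop3.6} holds uniformly over \emph{all} subintervals with conditioning at the moving left endpoint, a stopping-time iteration (exceedance of a level $\lambda$ can be repeated, each repetition having conditional probability at most $1/2$) yields exponential tails for $\sup_{s\le u\le t}|\al A^1_u-\al A^1_s|$, hence all $L^p$ bounds at the \emph{same} rate in $n$. This is precisely the weighted John--Nirenberg inequality, \cite[Proposition 3.2]{butkovsky2024strongrateconvergenceeuler}, which the paper invokes as a black box: it first deduces $\bb E_{k/n}|\al A^1_t-\al A^1_s|\le Cn^{-(1/2+\gamma-\varepsilon)}$ from Proposition \ref{prop3.6} by Jensen, then obtains $\bigl\|\sup_{s\le u\le t}|\al A^1_u-\al A^1_s|\bigr\|_{L^p(\Omega)|\scr F_{k/n}}\le Cn^{-(1/2+\gamma-\varepsilon)}$ from that inequality, and finally takes expectations. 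Your proof would be repaired by replacing the chaining argument with this (or an equivalent stopping-time) lemma; as written, the promotion step fails.
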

	\begin{proof}
		Define and divide the process as follows
		\begin{align*}
			\al{A}_{t}:=\int_0^t \big(g_1(r,X^{(n)}_r)-g_1(\kappa^\tau_n(r),X^{(n)}_{\kappa_n(r)})\big)g_2(r,X^{(n)}_r)\, \rrm d r:= \al{A}_{t}^1 + \al{A}_{t}^2,
		\end{align*}
		where
		$$
		 \al A_t^1:=\int_0^t  \big(g_1(r,X^{(n)}_r)-g_1(r,X^{(n)}_{\kappa_n(r)})\big)g_2(r,X^{(n)}_r)\,\rrm d r,
		$$
		and
		$$ \al A_t^2:=\int_0^t  \big(g_1(r,X^{(n)}_{\kappa_n(r)}-g_1(\kappa^\tau_n(r),X^{(n)}_{\kappa_n(r)})\big)g_2(r,X^{(n)}_r)\,\rrm d r,
		$$
		 According to Proposition \ref{prop3.6}, If there is a $k$ such that $0\le k/n\le s\le t\le 1$, it yields that 
		\begin{equation*}
			\begin{aligned}
				\bb E_{\frac{k}{n}}\big|\al A^1_t-\al A^1_s\big|\le \Big(\bb E_{\frac{k}{n}}\big|\al A^1_t-\al A^1_s\big|^2\Big)^\frac{1}{2}\le C n^{-(1/2+\gamma-\varepsilon)}
			\end{aligned}
		\end{equation*}
		Based on Weighted John-Nirenberg inequality \cite[Proposition 3.2]{butkovsky2024strongrateconvergenceeuler}, we obtain that 
		$$\big\|\sup_{s \le u \le t } |\al A^1_u-\al A^1_s|\big\|_{L^p(\Omega)|\scr F_{\frac{k}{n}}}\le C n^{-(1/2+\gamma-\varepsilon)}.$$
		Take expectation between the inequality, we have 
		$$\big\|\sup_{s \le u \le t } |\al A^1_u-\al A^1_s|\big\|_{L^p(\Omega)}=\big\|\big\|\sup_{s \le u \le t } |\al A^1_u-\al A^1_s|\big\|_{L^p(\Omega)|\scr F_{\frac{k}{n}}}\big\|_{L^p(\Omega)}\le C n^{-(1/2+\gamma-\varepsilon)}.$$
		Then by applying Kolmogorov continuity theorem to Lemma \ref{lem3.1}, it yields  

		$$\big\|\sup_{s \le u \le t } |\al A^2_u-\al A^2_s|\big\|_{L^p(\Omega)}\le C n^{-(1/2+\beta-\varepsilon)}.$$
		Based on the above inequalities, we obtain the following estimate
		\begin{equation*}
			\begin{aligned}
				\big\|\sup_{s \le u \le t}|\al A_u-\al A_s| \big\|_{L^p(\Omega)}&\le \big\|\sup_{s \le u \le t}|\al A^1_t-\al A^1_s| \big\|_{L^p(\Omega)}+\big\|\sup_{s \le u \le t}|\al A^2_t-\al A^2_s| \big\|_{L^p(\Omega)}\\&\le 
				 C n^{-(1/2+\gamma-\varepsilon)}
			\end{aligned}
		\end{equation*}
	\end{proof}

	\section{Proof of Theorem \ref{theo2.3}} \label{sec:mainthm}
	
	 We consider the backward parabolic equation \eqref{kolmo}
	\begin{align*}
		\partial_t u -\lambda u + \nabla u \cdot b + \al L u+b=0 \text{ on } [0,1] \times \bb R^d,~ u(1,x)=0.
	\end{align*}
	We know $L_t$ is a $d$-dimensional symmetric $\alpha$-stbale process. According to \eqref{eq,unorm}, we can choose $\lambda>0$ large enough such that 
    \begin{equation}\label{nabla,u}
        \sup_{0\le t\le 1}\left\|\nabla u(t,\cdot) \right\|_{C_b^\varpi}<\frac{1}{2},
    \end{equation}
	
	For notation simplicity, set 
	$$M_t^{\lambda}:=X_t-X_t^{(n)}+u(t,X_t)-u(t,X_t^{(n)}).$$
	Applying It\^o's formula, we obtain
	\begin{align*}
		\rrm dM_t^{\lambda}&=b(t,X_t)\rrm dt -b(\kappa_n^\tau(t),X_{\kappa_n(t)}^{(n)})\rrm dt+ \partial_tu(t,X_t)\rrm dt- \partial_tu(t,X^{(n)}_t)\rrm dt\\&\quad+ b(t,X_t)\nabla u(t,X_t)\rrm dt-b(\kappa_n^\tau(t),X_{\kappa_n(t)}^{(n)})\nabla u(t,X^{(n)}_t)\rrm dt\\&\quad+\int_{\{|z|\le1\}}u(t,X_{t^-}+z)-u(t,X_{t^-})\tilde N(\rrm dt,\rrm dz)+\int_{\{|z|>1\}}|u(t,X_{t^-}+z)-u(t,X_{t^-}) N(\rrm dt,\rrm dz)\\&\quad+\int_{\{|z|\le1\}}\big(u(t,X_{t}+z)-u(t,X_{t})-\left\langle z,\nabla u(t,X_{t}) \right\rangle \big)\upsilon(\rrm dz)\rrm dt\\&\quad+\int_{\{|z|\le1\}}u(t,X^{(n)}_{t^-}+z)-u(t,X^{(n)}_{t^-})\tilde N(\rrm dt,\rrm dz)+\int_{\{|z|>1\}}u(t,X^{(n)}_{t^-}+z)-u(t,X^{(n)}_{t^-}) N(\rrm dt,\rrm dz)\\&\quad+\int_{\{|z|\le1\}}\big(u(t,X^{(n)}_{t}+z)-u(t,X^{(n)}_{t})-\big\langle z,\nabla u(t,X^{(n)}_{t}) \big\rangle \big)\upsilon(\rrm dz)\rrm dt\\&=
		\big(b(t,X_t^{(n)})-b(\kappa_n^\tau(t),X_{\kappa_n(t)}^{(n)})\big)\rrm dt+\lambda\big(u(t,X_t)-u(t,X_t^{(n)})\big)\rrm dt\\&\quad+ \nabla u(t,X_t^{(n)})\big(b(t,X_t^{(n)})-b(\kappa_n^\tau(t),X_{\kappa_n(t)}^{(n)})\big)\rrm dt+ \int_{\bb R^d\setminus \{0\}}H(t,z)\tilde N(\rrm dt,\rrm dz),
	\end{align*}
	where $H(t,z)=u(t,X_{t^-}+z)-u(t,X_{t^-})-\big(u(t,X^{(n)}_{t^-}+z)-u(t,X^{(n)}_{t^-})\big)$.
	
	When $p\ge1$, we  apply the convexity inequality (also known as the $C_r$-inequality) $(a+b)^p\le 2^{p-1}a^p+2^{p-1}b^p$ and \eqref{nabla,u} such that
	\begin{equation*}
		\begin{aligned}
				\big| X_t-X_t^{(n)}\big| ^p&\le 2^{p-1}\left|M^{\lambda}_t \right|^p+2^{p-1}\big|u(t,X_t)-u(t,X_t^{(n)}) \big|  ^p\\&\le 2^{p-1}\left|M^{\lambda}_t \right|^p+\frac{1}{2}\big|X_t-X_t^{(n)} \big|  ^p.
		\end{aligned}
	\end{equation*} 
	Rearranging terms yields
		$$\big| X_t-X_t^{(n)}\big| ^p\le 2^{p+1}\left|M^\lambda_t \right|^p.$$	
	To proceed, we aim to estimate the expectation $\bb E\left|M_t \right|^p$. By applying standard inequalities, we obtain
	\begin{align*}
		\left|M^\lambda_t \right|^p&\le 4^{p-1}\Big|\int_{0}^{t}b(s,X_s^{(n)})-b(\kappa_n^\tau(s),X_{\kappa_n(s)}^{(n)})\rrm ds\Big|^p+4^{p-1}\lambda^p\big(\int_{0}^{t}\big|u(s,X_s)-u(s,X_s^{(n)})\big|\rrm ds\big)^p\\&\quad+ 4^{p-1}\Big|\int_{0}^{t}\nabla u(s,X_s^{(n)})\big(b(s,X_s^{(n)})-b(\kappa_n^\tau(s),X_{\kappa_n(s)}^{(n)})\big)\rrm ds\Big|^p+ 4^{p-1}\Big|\int_{0}^{t}\int_{\bb R^d\setminus \{0\}}H(s,z)\tilde N(\rrm ds,\rrm dz)\Big|^p\\&=:4^{p-1}\Theta_1(t)+4^{p-1}\lambda\Theta_2(t)+4^{p-1}\Theta_3(t)+4^{p-1}\Theta_4(t).
	\end{align*}
    Next, we will give the estimation of $\Theta_1(t),\Theta_2(t),\Theta_3(t),\Theta_4(t)$. To estimate $\Theta_4$, we decompose it into small- and large-jump contributions
	\begin{equation*}
		\begin{aligned}
			\bb E\Big[\sup_{0\le s\le t}\big|\Theta_4(s)\big|\Big]&\le C\bb E\Big[\sup_{0\le s\le t}\Big|\int_{0}^{s}\int_{|z|>1} H(r,z)\tilde N(\rrm dr,\rrm dz)\Big| ^p\Big]\\&\quad+
			C\bb E\Big[\sup_{0\le s\le t}\Big|\int_{0}^{s}\int_{|z|\le1} H(r,z)\tilde N(\rrm dr,\rrm dz)\Big|^p\Big]=:\Theta_{4,1}+\Theta_{4,2}.
		\end{aligned}
	\end{equation*}
	We observe that
	\begin{align*}
		|H(t,z)|\le |u(t,X_{t^-}+z)-u(t,X^{(n)}_{t^-}+z)|+|u(t,X_{t^-})-u(t,X^{(n)}_{t^-})|\le C|X_{t^-}-X^{(n)}_{t^-}|.
	\end{align*}
	Moreover, by \cite[Lemma 4.1]{MR2945756}, we also have for $|z|\le1$, 
	\begin{align}\label{eq,esti}
		|H(t,z)|\le C\left\|u \right\| _{C_b^{0,\alpha+\eta}}|X_{t^-}-X^{(n)}_{t^-}||z|^{\alpha+\eta-1},\quad\text{when $|z|\le1$}.
	\end{align}
	Using the boundedness of $u$, the properties of $\upsilon$ and Kunita’s inequality \cite[Theorem 4.4.23]{levy}, we obtain for $p\ge2$
	\begin{equation}\label{theta42}
		\begin{aligned}
			\Theta_{4,2}&\le C\bb E\Big[\big(\int_{0}^{t}\int_{|z|>1}\big|X_{s}-X^{(n)}_{s}\big|^2\upsilon(\rrm dz)\rrm ds\big)^{\frac{p}{2}}\Big]+C\bb E\Big[\int_{0}^{t}\int_{|z|>1}\big|X_{s}-X^{(n)}_{s}\big|^p\upsilon(\rrm dz)\rrm ds\Big]\\&\le
			C\big(\upsilon(|z|>1)^{\frac{p}{2}}+\upsilon(|z|>1)\big)\int_{0}^{t}\bb E\big|X_{s}-X^{(n)}_{s}\big|^p\rrm ds\\&\le C\int_{0}^{t}\bb E\big|X_{s}-X^{(n)}_{s}\big|^p\rrm ds.
		\end{aligned}
	\end{equation}
	For the small-jump component, by using \eqref{eq,esti}, we obtain
	\begin{equation}\label{theta41}
		\begin{aligned}
			\Theta_{4,1}&\le C\bb E\Big[\big(\int_{0}^{t}\int_{|z|\le1}\big|X_{t^-}-X^{(n)}_{t^-}\big|^2|z|^{2(\alpha+\eta-1)}\upsilon(\rrm dz)\rrm ds\big)^{\frac{p}{2}}\Big]\\&\quad+C\bb E\Big[\int_{0}^{t}\int_{|z|\le1}\big|X_{t^-}-X^{(n)}_{t^-}\big|^p|z|^{p(\alpha+\eta-1)}\upsilon(\rrm dz)\rrm ds\Big]\\&\le C\int_{0}^{t}\bb E\big|X_{s^-}-X^{(n)}_{s^-}\big|^p\rrm ds.
		\end{aligned}
	\end{equation}
    In summary, we get the estimation of $\Theta_4$,
    \begin{equation}
    \bb E\Big[\sup_{0 \le s \le t}|\Theta_4(s)|\Big]\le C\int_0^t\bb E|X_{s^-}-X^{(n)}_{s^-}|^p\rrm ds.
    \end{equation}
	Because $u\in C([0,1];C^{1+\gamma}_b(\bb R^d))$ in Theorem \ref{backward}, it yields that 
	\begin{equation}\label{theta2}
		\begin{aligned}
			\bb E\Big[\sup_{0 \le s \le t}\big(\int_{0}^{s}\big|u(r,X_r)-u(r,X^{(n)}_r)\big|\rrm dr\big)^p\Big]\le C\int_{0}^{t}\bb E\big|X_s-X^{(n)}_s\big|^p\rrm ds.
		\end{aligned}
	\end{equation}

	To estimate the last two remaining terms, we invoke Proposition \ref{prop3.7} with $g_2=1$. Then it yields that 
		\begin{equation}\label{theta1}
		\begin{aligned}
			\bb E\Big[\sup_{0 \le s \le t}\Theta_1(s)\Big]&\le C\Big(\Big\|\sup_{0 \le s \le t}\big|\int_{0}^{s}b(r,X_{\kappa_n(r)}^{(n)})-b(\kappa_n^\tau(r),X_{\kappa_n(r)}^{(n)})\rrm dr \big|\Big\|^p_{L^p(\Omega)}\\&\qquad
			+\Big\|\sup_{0 \le s \le t}\big|\int_{0}^{s}b(r,X_{r}^{(n)})-b(r,X_{\kappa_n(r)}^{(n)})\rrm dr\big| \Big\|^p_{L^p(\Omega)} \Big)\\&\le Cn^{-(1/2+\gamma+\varepsilon)}.
		\end{aligned}
	\end{equation}

	Similarly, for $\Theta_3(t)$, we have
	\begin{equation}\label{theta3}
		\begin{aligned}
			\bb E\Big[\sup_{0 \le s \le t}\Theta_3(s)\Big]&\le C\Big(\Big\|\sup_{0 \le s \le t}\big|\int_{0}^{s}\nabla u(r,X_r^{(n)})\big(b(r,X_{\kappa_n(r)}^{(n)})-b(\kappa_n^\tau(r),X_{\kappa_n(r)}^{(n)})\big)\rrm dr\big| \Big\|^p_{L^p(\Omega)}\\&\qquad
			+\Big\|\sup_{0 \le s \le t}\big|\int_{0}^{s}\nabla u(r,X_r^{(n)})\big(b(r,X_r^{(n)})-b(r,X_{\kappa_n(r)}^{(n)})\big)\rrm dr\big| \Big\|^p_{L^p(\Omega)} \Big)\\&\le Cn^{-(1/2+\gamma+\varepsilon)p}.
		\end{aligned}
	\end{equation}
	Combining inequalities \eqref{theta42}-–\eqref{theta3}, we obtain
	
	\begin{equation}
		\begin{aligned}
			\bb E\Big[\sup_{0 \le s \le t}\big|X_t-X^{(n)}_t\big|^p\Big]&\le C\bb E\Big[\sup_{0 \le s \le t}\big|M^\lambda_t\big|^p\Big]\\&\le C\int_{0}^{t}\bb{E}\Big[\sup_{0 \le s \le t}\big|X_r-X^{(n)}_r\big|^p\Big] \rrm ds+Cn^{-(1/2+\gamma+\varepsilon)p}.
		\end{aligned}
	\end{equation}
	Applying Gronwall’s inequality yields

	\begin{align*}
		\bb E\Big(\sup_{0\le s \le 1}\left|X_s-X_s^{(n)} \right|^p \Big)\le Cn^{-(1/2+\gamma-\varepsilon)p},
	\end{align*}
	When $p\in[1,2)$, the $L^p$-norm is controlled by the $L^2$-norm due to Jensen’s inequality. Therefore, the convergence holds for all $p\ge1 $, and the result follows.
	\qed

    \section{Numerical experiments}\label{sec:numerical}

For numerical experiments, we consider the scalar SDE \eqref{main SDE} over $[0,1]$ with the transform  $Y_t = X_t - L_t$, where  $\alpha\in \{1.25,1.5,1.75\}.$
The random differential equation would be
\begin{equation}
    \mathrm{d}Y_t = b(t, Y_t+L_t)\,\mathrm{d}t, \quad Y_0 = 0. 
\end{equation}
The EM simulation in  \eqref{eqn:emcontinuous} (resp. randomised EM simulation in \eqref{con EM SDE}) for $X_t$ is equivalent to the Euler (resp. randomised Euler) simulation for $Y_t$. The randomised algorithm can be found in Algorithm \ref{alg}.

To compare the performance of the EM method and the randomised EM method, we design a series of experiments with various coefficients $b$. A high-precision reference solution is computed using the randomised EM method with a finest stepsize \( h_{\text{ref}} = 2^{-17} \). The performances are evaluated via 300 independent simulations for each \( h = 2^{-l} \), \( l = 5, \ldots, 10 \). The error is quantified as the root mean square of the maximum absolute deviation across the time interval:
\[
\text{Error} = \left( \frac{1}{N} \sum_{i=1}^{N} \left( \max_{j} \left| Y_j^{\text{num},(i)} - Y_{j}^{\text{ref}, (i)} \right| \right)^2 \right)^{1/2},
\]
where $N = 300$ is the number in simulations, $Y_j^{\text{num},(i)}$
is the simulation value from either EM or randomised EM at the $j$-th time step in the $i$-th simulation, and $Y_{j}^{\text{ref}, (i)}$ is the corresponding reference solution value. The experiment code can be found in \href{https://github.com/iooiooiooiooi/Randomised-EM-Method-for-SDEs-with-Holder-Continuous-Drift-Coeffient-Driven-by--Stable-Levy-Process}{this GitHub repository}.

\vspace{1ex}
\vspace{1ex}
\vspace{1ex}
\begin{algorithm}[H]
\caption{Randomised Euler--Maruyama Method under Lévy Increments\label{alg}}
\KwIn{Initial value $Y_0$, stepsize $h$, final time $T$, drift function $b$, stable index $\alpha$}
\KwOut{Approximate solution $\{Y_k\}$ at times $t_k = kh$}

Set $N = T/h$;

Generate Lévy increments: $\Delta L_k \sim \text{Stable}(\alpha)$ for $k = 0, \dots, N-1$\;

Construct Lévy path: $L_0 = 0$, and for $k \geq 1$, set $L_k = L_{k-1} + \Delta L_{k-1}$\;

\For{$k = 0$ to $N-1$}{
    Generate $\theta_k \sim \mathcal{U}(0,1)$\;
    Set $t_k^\theta = t_k + h \cdot \theta_k$\;
    
    Update $Y_{k+1} = Y_k + h \cdot b(t_k^\theta, Y_k + L_k)$\;
}
\Return $\{Y_k\}$
\end{algorithm}

\subsection{Example 1: the Weierstrass  function}

Inspired by \cite{ellinger2025optimalerrorratesstrong},  the drift function \( b(t,x) \) is defined as a product of two Weierstrass-type functions - one in time and one in space:
\begin{equation}\label{eqn:bexample1}
    b(t,x) := W(t) \cdot \mu(x),
\end{equation}
where
\begin{align}\label{eqn:Wfunction}
\begin{split}
        W(t) &= \sum_{k=0}^{N_{W}-1} a^k \cos(\pi b^k t), \quad t \in [0,1],\\
    \mu(x) &= \sum_{j=0}^{{N_{\mu}-1}} 2^{-\gamma j} \cos(2^j \pi x), \quad x \in \mathbb{R}.
\end{split}
\end{align}
 Both $W(t)$ and $\mu(x)$ are truncated variants of the classical Weierstrass function, truncated to $N_{W}$ and $N_{\mu}$ terms respectively.
 It is well-known that these functions with infinitely many terms are continuous but nowhere differentiable and are known to be H\"older continuous for suitable choice of $a,b$ and $\gamma$.  Thus, the drift \( b(t,x) \) inherits the irregularity in both time and space. In our simulations, we use the parameters \( a = 0.5 \), \( b = 12 \), \( N_{W} = 25 \), \( \gamma = 0.5 \), and \( N_{\mu} = 25 \) in \eqref{eqn:Wfunction} so that $b$ is (approximately) $(\ln{2}/\ln{12})$-H\"older continuous in time and bounded $1/2$-H\"older continuous in space. It is easy to verify the triplet $(\alpha,\beta,\eta)$ satisfy the condition \eqref{co:AS}.

As shown in Figure~\ref{fig:weierstrass},the randomised EM method consistently outperforms the classical EM method in terms of both accuracy and convergence order. Specifically, when $\alpha=1.5$, the estimated slope of the convergence line is approximately $0.86$ for the randomised method, compared to $0.56$ for the classical method. This indicates that the randomisation technique improves the convergence behaviour under irregular drift functions. While both methods use the same number of simulations, the randomised version achieves significantly lower error, especially for smaller step sizes.

\begin{figure}[H]
    \centering
    \begin{subfigure}[b]{0.31\textwidth}
        \centering
        \includegraphics[width=\textwidth]{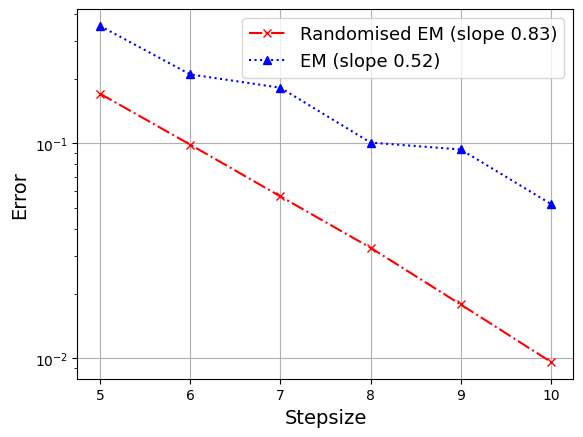}
        \caption{$\alpha=1.25$.}
        \label{fig:sub1}
    \end{subfigure}
    \hfill
    \begin{subfigure}[b]{0.31\textwidth}
        \centering
        \includegraphics[width=\textwidth]{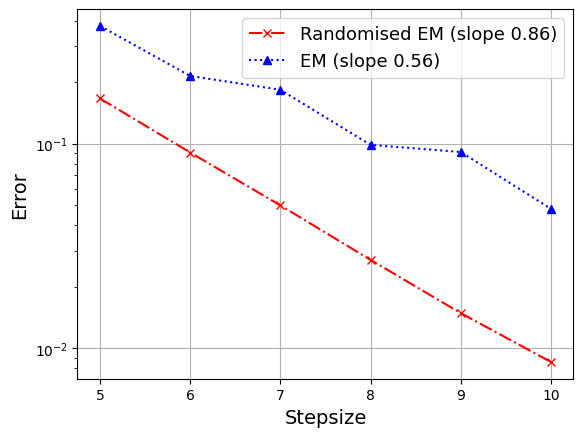}
        \caption{$\alpha=1.5$.}
        \label{fig:sub2}
    \end{subfigure}
    \hfill
    \begin{subfigure}[b]{0.31\textwidth}
        \centering
        \includegraphics[width=\textwidth]{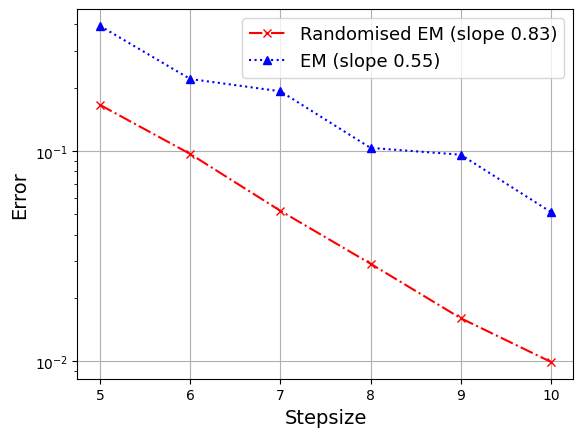}
        \caption{$\alpha=1.75$.}
        \label{fig:sub3}
    \end{subfigure}
    \caption{Semi-log plots of error vs.\ stepsize for Example 1 (the Weierstrass function).}
    \label{fig:weierstrass}
\end{figure}

\subsection{Example 2: the jump-type time-dependent function}

Inspired by \cite[Example 6.2]{Krusewu2017}, we consider a time-dependent drift with multiple discontinuities, defined via a weighted sum of sign functions:
\begin{equation}
    g(t) = \sum_{k=1}^{16} c_k \, \mathrm{sign}\left( \frac{k}{16}T - t \right),
    \label{eq:jump_drift}
\end{equation}
where \( T = 1 \) is the terminal time, and the coefficients \( \{c_k\} \) decrease linearly from \(-1\) to \(-0.05\), i.e., 
\[
    c_k = -1 + \frac{0.95(k-1)}{15}.
\]
Take
\begin{equation}
    b(t,x) := g(t) x
\end{equation}
so that the drift is constructed to contain multiple jump discontinuities, serving as a test case to evaluate the performance of numerical solvers for SDEs with non-smooth drift, which is not covered by the assumptions in this paper. In particular, the resulting drift exhibits jump discontinuities at 16 equidistant time points    $t_k = \frac{k}{16}T, \quad k = 1, 2, \ldots, 16.$

It can be observed from Figure~\ref{fig:jump} that across all three values of $\alpha$, although the classical EM method achieves a slightly higher convergence rate, the randomised EM method consistently achieves smaller errors across all tested stepsizes. This suggests that, for SDEs with jump discontinuities in the drift, the randomised EM may provide better practical accuracy despite a comparable theoretical rate.

\begin{figure}[H]
    \centering
    \begin{subfigure}[b]{0.31\textwidth}
        \centering
        \includegraphics[width=\textwidth]{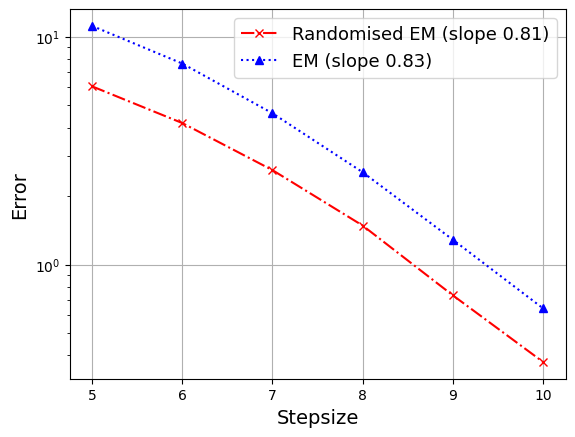}
        \caption{$\alpha=1.25$.}
        \label{fig:sub1}
    \end{subfigure}
    \hfill
    \begin{subfigure}[b]{0.31\textwidth}
        \centering
        \includegraphics[width=\textwidth]{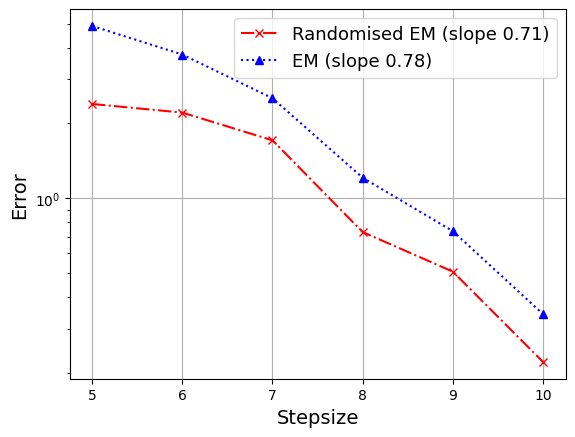}
        \caption{$\alpha=1.5$.}
        \label{fig:sub2}
    \end{subfigure}
    \hfill
    \begin{subfigure}[b]{0.31\textwidth}
        \centering
        \includegraphics[width=\textwidth]{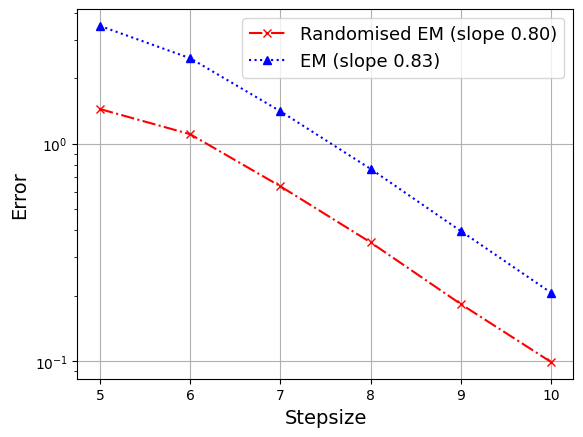}
        \caption{$\alpha=1.75$.}
        \label{fig:sub3}
    \end{subfigure}
    \caption{Semi-log plots of error vs.\ stepsize for Example 2 (the jump-type drift).}
    \label{fig:jump}
\end{figure}

\subsection{Example 3: the piecewise root function}

The drift function in this example is given by
\begin{equation}
    b(t, x) := g(t)x + \sqrt{\max(x, 10^{-12})},   \quad x \in \mathbb{R}, 
    \label{eq:root_drift}
\end{equation}
where \( g(t) \) is a piecewise constant function defined in ~\eqref{eq:jump_drift}. This drift function combines a discontinuous time-dependent component \( g(t) \) with a nonlinear spatial term involving the square root. To ensure numerical stability when the state variable takes non-positive values, we implement the square-root term as \( \sqrt{\max(x, 10^{-12})} \). While this avoids undefined evaluations, the drift remains non-Lipschitz in space due to the sharp nonlinearity near \( x = 0 \), and is discontinuous in time due to the piecewise nature of \( g(t) \).

Figure~\ref{fig:root} shows the convergence performance of both the standard EM method and the randomised EM method applied to the drift function ~\eqref{eq:root_drift}. We observe that both methods achieve a similar empirical convergence rate of approximately \( \mathcal{O}(h^{0.7 \sim 0.8}) \). However, the randomised EM method consistently has lower errors across all tested step sizes, demonstrating superior accuracy. This indicates its enhanced robustness when dealing with drift functions that are discontinuous in time and nonlinear in space.

\begin{figure}[H]
    \centering
    \begin{subfigure}[b]{0.31\textwidth}
        \centering
        \includegraphics[width=\textwidth]{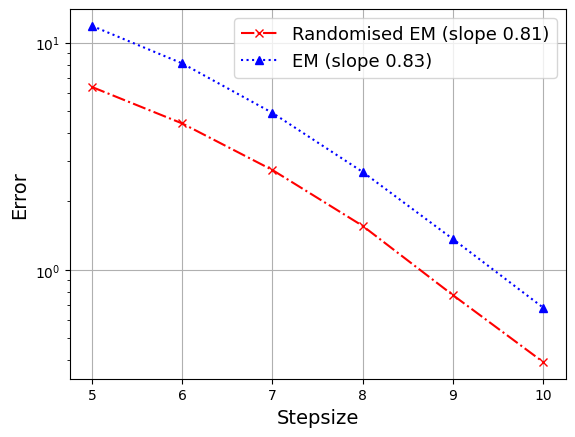}
        \caption{$\alpha=1.25$.}
        \label{fig:sub1}
    \end{subfigure}
    \hfill
    \begin{subfigure}[b]{0.31\textwidth}
        \centering
        \includegraphics[width=\textwidth]{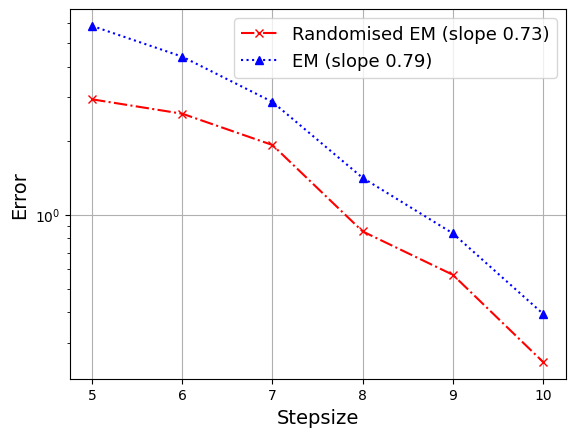}
        \caption{$\alpha=1.5$.}
        \label{fig:sub2}
    \end{subfigure}
    \hfill
    \begin{subfigure}[b]{0.31\textwidth}
        \centering
        \includegraphics[width=\textwidth]{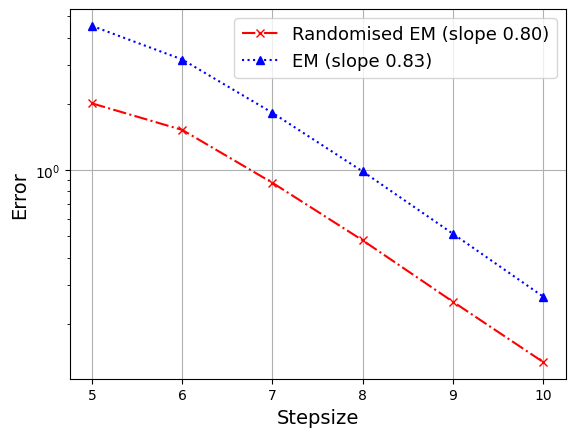}
        \caption{$\alpha=1.75.$}
        \label{fig:sub3}
    \end{subfigure}
    \caption{Semi-log plots of error vs.\ stepsize for Example 3 (the piecewise root function).}
    \label{fig:root}
\end{figure}

\subsection{Example 4: the temporal highly-oscillated function }
We construct a drift function of the form
\begin{equation}
    b(t, x) := \min\left( |x|^{\text{order}_1}, \text{lbd} \right) \cdot \left| \sin(w t) \right|^{1/20} - t^{\text{order}_2},
\end{equation}
where \( \text{lbd} > 0 \) is a positive lower bound, \( \text{order}_1 \in (0, 1) \) controls the growth in \( x \), and \( \text{order}_2 \in (0, 1) \) determines the regularity in time. The function introduces a combination of spatial saturation and temporal oscillation, modulated by a sine function.

In our study, we set the parameters as follows:

\[
\text{lbd}  = 20, \quad w = 2^8 \pi, \quad \text{order}_{1} = \frac{1}{5}, \quad \text{order}_{2} = \frac{1}{10}.\]

This drift function combines spatial non-Lipschitz growth, temporal oscillations, and a saturation mechanism. The spatial component \( \min(|x|^{\text{order}_1}, \text{lbd}) \) limits the growth of the state variable, while the oscillatory factor \( |\sin(wt)|^{1/20} \) and the decay term \( t^{\text{order}_2} \) introduce irregularities in time. Such construction poses significant challenges for standard numerical solvers and serves as a rigorous test case for evaluating robustness under non-smooth dynamics.

As illustrated in Figure~\ref{fig:DIY}, the classical EM method fails to converge under this challenging drift, exhibiting a very low convergence rate of 0.39. In contrast, the randomised EM method achieves a significantly better convergence rate of approximately 0.88, with consistently lower errors across all tested step sizes. This demonstrates the robustness of the randomised method in the presence of spatially saturated, temporally oscillatory and non-Lipschitz drift terms.

\begin{figure}[H]
    \centering
    \begin{subfigure}[b]{0.31\textwidth}
        \centering
        \includegraphics[width=\textwidth]{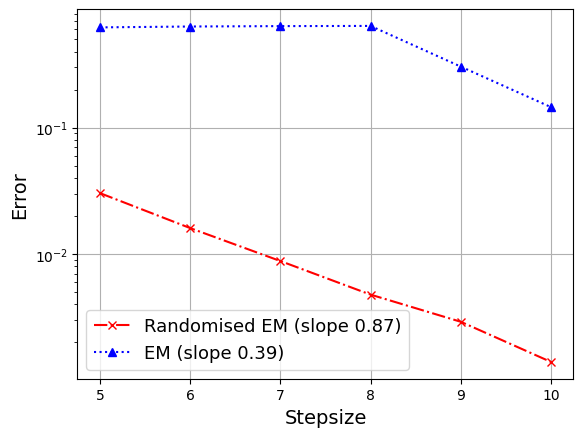}
        \caption{$\alpha=1.25$.}
        \label{fig:sub1}
    \end{subfigure}
    \hfill
    \begin{subfigure}[b]{0.31\textwidth}
        \centering
        \includegraphics[width=\textwidth]{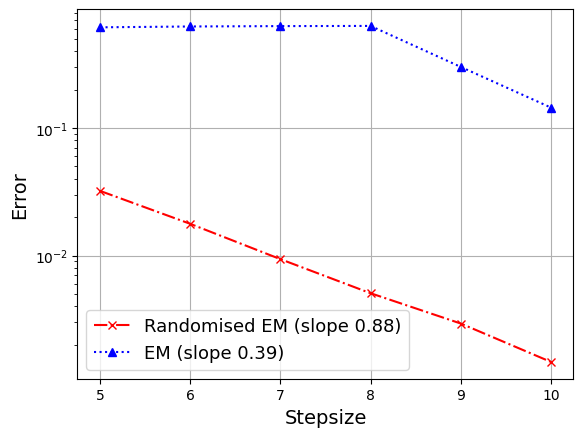}
        \caption{$\alpha=1.5$.}
        \label{fig:sub2}
    \end{subfigure}
    \hfill
    \begin{subfigure}[b]{0.31\textwidth}
        \centering
        \includegraphics[width=\textwidth]{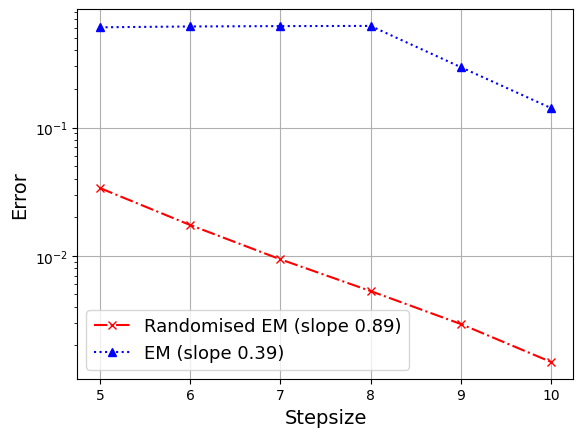}
        \caption{$\alpha=1.75.$}
        \label{fig:sub3}
    \end{subfigure}
    \caption{Semi-log plots of error vs.\ stepsize for Example 4 (the temporal highly-oscillated function). }
    \label{fig:DIY}
\end{figure}
 \section*{Acknowledgments}
YW would like to acknowledge the support of the Royal Society through the International Exchanges scheme IES\textbackslash R3\textbackslash 233115.

\section*{Conflict of interest}
None of the authors have a conflict of interest to disclose.

\bibliographystyle{plain} 
\bibliography{reference.bib}
\end{document}